\newtheoremstyle{boldremark}
    {\dimexpr\topsep/2\relax} % space above
    {\dimexpr\topsep/2\relax} % space below
    {}          % body font
    {}          % indent amount
    {\bfseries} % theorem head font
    {.}         % punctuation after theorem head
    {.5em}      % space after theorem head
    {}          % theorem hed spec. (empty = "normal")
\theoremstyle{plain}
\newtheorem{theorem}{Theorem}[section]
\newtheorem{lemma}[theorem]{Lemma}
\newtheorem{corollary}[theorem]{Corollary}
\newtheorem{proposition}[theorem]{Proposition}
\theoremstyle{definition}
\newtheorem{definition}[theorem]{Definition}
\newtheorem{example}[theorem]{Example}
\newtheorem{notation}[theorem]{Notation}
\newtheorem*{assumption*}{\assumptionnumber}
\providecommand{\assumptionnumber}{}
\theoremstyle{boldremark}
\newtheorem{remark}[theorem]{Remark}
\numberwithin{equation}{section}
\def\PP{\mathcal{P}_R^d(0)^*}
\def\P{P_R^d(0)}
\def\SS{\mathcal{S}^*}
\def\S{\mathcal{S}}
\def\C{\mathbb C}
\def\Rset{\mathbb R}
\def\a{\mathbf{a}}
\def\j{\mathbf{j}}
\def\u{\mathbf{u}}
\def\c{\mathbf{c}}
\def\v{\mathbf{v}}
\def\b{\mathbf{b}}
\def\m{\mathbf{m}}
\def\Nset{\mathbb{N}}
\def\N{\mathbb{N}}
\def\la{\bm{\lambda}}
\def\sg{\bm{\sigma}}
\def\ppsi{\bm{\psi}}
\newcommand{\cadlag}{c\`adl\`ag~}
\newcommand{\R}{\mathbb R}
\newcommand{\E}{\mathbb E}
\newcommand{\A}{\mathcal A}
\newcommand{\Bcal}{\mathcal B}
\newcommand{\D}{\mathcal D}
\newcommand{\Acal}{\mathcal A}
\newcommand{\Dcal}{\mathcal D}
\newcommand{\Ocal}{\mathcal O}
\newcommand{\Scal}{\mathcal S}
\newcommand{\Vcal}{\mathcal V}
\newcommand{\Wcal}{\mathcal W}
\newcommand{\e}{\epsilon}
\newcommand{\vare}{\varepsilon}
\title{Holomorphic jump-diffusions}
\author{Christa Cuchiero \thanks{University of Vienna, Department of Statistics and Operations Research, Data Science @ Uni Vienna, Kolingasse 14-16, 1090 Vienna, Austria, christa.cuchiero@univie.ac.at
} 
\and Francesca Primavera  \thanks{University of California, Berkeley, Department of Industrial Engineering and Operations Research, Etcheverry Hall 4141, 94720, Berkeley, CA, USA, francesca.primavera@berkeley.edu}
\and Sara Svaluto-Ferro \thanks{University of Verona, Department of Economics, Via Cantarane 24, 37129 Verona, Italy, sara.svalutoferro@univr.it \newline
%The authors gratefully acknowledge financial support 
%through grant Y 1235 of the START-program.
This research was funded in whole or in part by the Austrian Science Fund (FWF)
Y 1235.
}}
\date{}
\begin{document}
\maketitle

\begin{abstract}
 We introduce a class of jump-diffusions, called \emph{holomorphic}, of which the well-known classes of affine and polynomial processes are particular instances. The defining property concerns the extended generator, which is required to map a (subset of) holomorphic functions to themselves. This leads to a representation of the expectation of  power series of the process' marginals via a potentially infinite dimensional linear ODE.  We apply the same procedure by considering exponentials of holomorphic functions, leading to a class of processes named \emph{affine-holomorphic} for which a representation for quantities as the characteristic function of power series is provided.  Relying on powerful results from complex analysis, we obtain sufficient conditions on the process' characteristics which guarantee the holomorphic and affine-holomorphic properties and provide applications to several classes of jump-diffusions.

\end{abstract}

\noindent\textbf{Keywords:}  holomorphic maps, dual representation, affine and polynomial processes,  power series expansions for Fourier-Laplace transforms, and jump-diffusions\\
\noindent \textbf{MSC (2020) Classification:   60G20; 60J76; 37F80 } 
\tableofcontents

\section{Introduction}

The goal of this article is to introduce a class of jump-diffusion processes, called \emph{holomorphic}, for which the calculation of expected values of power series of the process' marginals, 
reduces to solving a sequence-valued linear ordinary differential equation (ODE). 
This thus constitutes an important extension of \emph{polynomial processes,} introduced in \cite{CKT:12, FL:16}, for which merely moments can be computed by solving a (finite dimensional) linear ODE.

At the core of our analysis lie duality considerations which
are a key concept in many areas of mathematics and have also played an important role in the analysis of stochastic processes. Indeed, duality theory for Markov processes with respect to a duality function goes back to several contributions in the early fifties, e.g., \cite{KM:57}, where classifications of birth and death processes are considered. Since then it  has been  extended in several directions (see e.g., \cite{HS:79, EK}) and applied in the context of interacting particle systems, queuing theory and population genetics.

The concept of dual processes can be formalized as follows: let $T >0$ be some finite time-horizon and consider two time-homogeneous Markov processes $(X_t)_{t \in [0,T]}$ and $(U_t)_{t \in [0,T]}$ with respective state spaces $S$ and $U$. Then $X$ and $U$ are \emph{dual} with respect to some measurable function $H: S \times U \to \mathbb{R}$ if for all $x \in S$, $u \in U$ and $t \in [0,T]$
\begin{align}\label{eq:dual}
\mathbb{E}_x[H(X_t,u)]=\mathbb{E}_u[H(x,U_t)],
\end{align}
holds and both, the left and right hand side are are well-defined. Here, $\mathbb{E}_x$ denotes the expected value for the Markov process $X$ starting at $X_0=x$ and similarly for $U$. Modulo technical conditions, the dual relation \eqref{eq:dual} holds if and only if the  generators denoted by $\mathcal{A}$ and $\mathcal{B}$ satisfy 
\begin{align} \label{eq:generator}
\mathcal{A} H(\cdot, u)(x) = \mathcal{B} H(x, \cdot)(u), \quad  \text{for all } x \in S, u \in U,
\end{align}
(see e.g.,~\cite{JK:14}). One prominent example is the Wright-Fisher diffusion, also called Jacobi process and denoted now by $X$, whose dual process with respect to $H(x,u)= x^u$ with $u \in \mathbb{N}$ is the Kingman coalescent $U$, i.e., we have 
\[
\mathbb{E}_x[X_t^u]=\mathbb{E}_u[x^{U_t}].
\]
 The Wright-Fisher diffusion is also an example of a polynomial process and for all functions $H(x,u): [0,1] \times \mathbb{R}^n \to \mathbb{R}$ given by $H(x,u)=\sum_{i=0}^{n-1} u_i x^i$ it  holds by the so-called \emph{moment formula} (see e.g., \cite{FL:16})
that 
 \[
\mathbb{E}_x[H(X_t,u)]=\mathbb{E}_x\left[\sum_{i=0}^{n-1} u_i X_t^i\right]= \sum_{i=0}^{n-1} c(t)_i x^i=H(x,c(t))=\mathbb{E}_u[H(x,c(t))],
\]
where $c(t)$ is the solution of a linear ordinary differential equation (ODE). 
This means  that in this case the dual process is given by the  coefficients $U_t=c(t)$ (with $U_0=c(0)=u$) of the polynomial $x \mapsto H(x,c(t))$, which are as solution of a linear ODE deterministic. This property actually defines polynomial processes in the sense that the expected value of a polynomial of $X_t$ for $t \in [0,T]$ is a polynomial in the initial value $X_0=x$. Another class of stochastic processes which admit a deterministic dual process with respect to certain functions $H$ are \emph{affine processes} (see \cite{DFS:03, CT:13}). In this case the duality functions  $H: S  \times U \to \mathbb{R}$ are given by $H(x,u)=\exp(\langle u,x \rangle)$ where $S \subseteq \mathbb{R}^d $, $U \subseteq \mathbb{C}^d$ and $\langle \cdot,\cdot \rangle$ denotes the scalar product on $\mathbb{R}^d$ (extended to $\mathbb{C}^d$). The corresponding dual process is a solution to a generalized Riccati ODE. 

These deterministic dual processes for $H$ ranging in the important and law determining\footnote{In the polynomial case the law is determined by the moments under certain exponential moment conditions, while the characteristic function in the affine case always determines the law.} function classes of polynomials and exponential functions (when viewed as functions in $x$) are the crucial property for the popularity of affine and polynomial processes for all kind of applications including finance,  population genetics and physics. Indeed, the computation of expected values for these functions $H$ reduces to solving simple deterministic ODEs.

It is therefore natural  to search of other stochastic processes and functions which admit a deterministic dual process.

The most natural extension of both polynomial and exponential functions are \emph{entire functions}, or more generally the class \emph{holomorphic functions}, which always admit a power series representation on $\mathbb{C}^d$, whose radius of convergence is not necessarily the whole space.  
The goal of the present paper is thus to define and specify a class of processes $X$ such that
\begin{align}\label{eq:holomorpicformula_intro}
\mathbb{E}_x[H(X_t,\u)]=H(x,\c(t)),
\end{align}
where $H$ is (the restriction to $\mathbb{R}^d$ of) a holomorphic function on $\mathbb{C}^d$,  given by the power series $H(x,\u)=\sum_{\alpha \in \mathbb{N}_0^d} \u_{\alpha} x^{\alpha}$ where $\alpha \in \mathbb{N}_0^d$ is a multi-index and $\u_{\alpha} \in \mathbb{C}$ are the coefficients. The reason why we consider holomorphic functions on $\mathbb{C}^d$ and not only power-series on $\mathbb{R}^d$ is that the uniform limit of a sequence of holomorphic functions is again holomorphic, a key property which is needed to guarantee property \eqref{eq:generator} in presence of jumps.

This rather wide extension of polynomial processes 
is twofold:
first, it allows go beyond them and second, we can establish an analog of the moment formula, which we call \emph{holomorphic formula} given by \eqref{eq:holomorpicformula_intro},  also for affine and certain polynomial processes (see Section~\ref{sec: examples}), including for instance the Wright-Fisher diffusion (see Section~6.2 of \cite{CST:23}). This means that for these well-known processes' classes, expectation of many more functions than just exponentials or polynomials can be computed analytically.
 
 In contrast to the polynomial case, $(\c(t))_{t \in [0,T]}$ in \eqref{eq:holomorpicformula_intro} is  a sequence-valued infinite dimensional deterministic process, given as a solution of an infinite dimensional linear ODE. 
 This already indicates that the analysis  becomes much more delicate, as we have to deal with existence of such solutions, convergence of the corresponding power series and many other subtleties from  complex analysis. 
 
 The fact that we consider \emph{jump-diffusions} instead of just continuous processes makes things even further involved. Indeed, already establishing the necessary condition~\ref{eq:generator}, which in the current setup means that the generator of $X$ maps holomorphic functions to holomorphic ones, is due to the appearance of jumps not at all straightforward (see Theorem~\ref{prop: holo generalkernel}). Indeed, one has to define specific subsets  of holomorphic functions, denoted by $\mathcal{V}$, where this holds true, see e.g., the set defined in \eqref{eq:Vgeneralkernel}. This is the very reason why we  consider  \emph{$\mathcal{V}$-holomorphic processes} which we define as solutions to the martingale problem specified by the (extended) generator $\mathcal{A}$ which maps $\mathcal{V}$ to general holomorphic functions. The structure of the compensator of the jump measure will allow different sets $\mathcal{V}$. The corresponding analysis in this sense  requires to apply a number of results of complex analysis and is subject of Section~\ref{sec: section 2.1} (see in particular Theorem~\ref{ps to ps}). 
For such $\mathcal{V}$-holomorphic processes we then establish the holomorphic formula \eqref{eq:holomorpicformula_intro}, under technical conditions involving in particular the existence of solutions to the sequence-valued linear ODEs and certain moment conditions. This can be viewed as a verification result (see Theorem~\ref{th: moment formula holomorphic}). To apply this result we establish further sufficient conditions which are then verified for L\'evy processes, affine processes and certain non-polynomial jump diffusions on compact sets (see Section ~\ref{sec: examples}), which is one of the main contributions of this paper.
 
In analogy to affine processes we can of course also consider processes  where the expected value of the exponential of a holomorphic function is given as the exponential of a holomorphic function whose coefficients solve a sequence-valued Riccati ODE. 
This is subject of Section~\ref{sec: affine holomorphic} where we introduce the class of \emph{affine-holomorphic processes},
explain their relation to holomorphic ones and elaborate on their characteristics as well as appropriate subsets of holomorphic functions that are mapped to (general) holomorphic functions by the Riccati operator. Here, the jumps are again 
the essential part which makes the analysis intricate (see Theorem~\ref{theorem: new}). Under technical conditions, involving in particular the existence of solutions of the infinite dimensional Riccati ODEs, we then prove the so-called affine-holomorphic formula (see Theorem~\ref{th: affine formula}).

In terms of applications our results can be used for pricing and hedging real analytic claims in finance  and new duality relations in population genetics as e.g., in \cite{CSP:18, BCGKW:14}. Reading the duality relation backwards, the (affine)-holomorphic formula also allows to develop numerical schemes for solving infinite dimensional linear and Riccati ODEs based on the stochastic representations.

 The remainder of the paper is organized as follows. In the subsections below we clarify the relations to the literature and fix the terminology related to holomorphic functions. Section~\ref{sec:preliminaries} introduces all necessary notation, general jump-diffusion processes and convergent power series for given state spaces. Section~\ref{sec:holomorphic_jd} and Section~\ref{sec: affine holomorphic}
are then dedicated to the analysis of holomorphic and affine-holomorphic processes respectively. The paper concludes with several appendices, containing in particular the proofs and important results from complex analysis (see Appendix~\ref{appendix:holofunctions}).
 
\subsection{Relation to the literature}

Our expressions for the expected value of holomorphic functions either in terms of the holomorphic formula or the affine-holomorphic formula are related to several recent works in the literature. Indeed, in the one-dimensional case similar results have been obtained in \cite{CST:23}, however only for continuous processes and  real analytic functions, which is a significantly simpler setting. Note that the main focus of \cite{CST:23} are actually continuous signature SDEs  which reduce in the one dimensional case to continuous holomorphic processes. We also refer to Remark~\ref{rem: poly processes tensor algebra} for another relation with signature SDEs. Developing signature jump-diffusions and in turn the theory of 
holomorphic jump-diffusions on the extended tensor algebra is subject of future work and already partly realized in Chapter~3 in \cite{P:24}.

Our expressions for the
expected values of power series in terms of power series are also related to similar expansions, obtained e.g., in \cite{FHT:22, FGR:22, FG:22, FM:21,  AGR:20}. For applications to Fourier pricing in finance relying on infinite dimensional Riccati equations  we refer to \cite{AG:24, ALL:24}.

In terms of polynomial processes which -- as already mentioned -- constitute a  special case of holomorphic processes, let us in particular mention the recent paper \cite{BDK:20} where an abstract far reaching approach to polynomial processes is proposed. This covers also infinite dimensional polynomial processes considered  in \cite{BDK:18, CS:21, CDGS:24} as well. The latter can also be related to lifts of polynomial Volterra processes which were recently  analyzed in \cite{ACPPS:24}.

\subsection{Terminology}

We here aim to recall several notions related to holomorphic functions and fix some terminology which we shall use throughout.

A holomorphic function with values in $\C$ is a map defined on (some subset of) the complex plane that is complex differentiable at every point within its domain (see Definition~\ref{def: holo functions}, where more generally $\C^m$-valued holomorphic functions are introduced). Despite the resemblance of this class with the class of just differentiable functions
of one real variable, the former satisfy much stronger properties. A holomorphic function is actually infinitely many times complex differentiable, that is, the existence
of the first derivative guarantees the existence of the derivatives of any
order. In fact, more
is true: every holomorphic function is \emph{complex analytic}, in the sense that it has a
power series expansion near every point. This is also the reason why the term (complex) analytic is frequently used as a synonym for holomorphic. Moreover, any real analytic function on some open set on the real line can be extended to a complex analytic, and thus holomorphic, function on some open set of the complex plane. (However, not every real analytic function defined on the whole real line can be extended to a complex function defined on the whole complex plane). In particular, any real convergent power series can be extended to a complex holomorphic function on some open disc of the complex plane.

\section{Preliminaries}\label{sec:preliminaries}

This section is primarily dedicated to introduce  necessary notation and the notion of jump-diffusion processes with which we shall work.

\subsection{Notation}\label{sec:notation}
\subsubsection{Multi-index notation} \label{sec: multi index notation}
Fix $d\in \N$. For $z\in \C^d$  we write $z_1,\ldots, z_d$ for the components of $z$ and set 
$$|z|:=(|z_1|,\ldots,|z_d|)\qquad\text{and}\qquad\|z\|:=\sqrt{|z_1|^2+\ldots+|z_d|^2},$$
where $|z_j|$ denotes denote the modulus of $z_j$.
For a multi-index $\alpha=(\alpha_1,\dots,\alpha_d)\in \N^d_0$ we use the following notation: $|\alpha|:=\alpha_1+\dots+\alpha_d$, $\alpha!:=\alpha_1!\dots\alpha_d!$, $z^\alpha:=z_1^{\alpha_1}\dots z_d^{\alpha_d}$ for  $z\in \C^d$, and for $d=1$, we disregard the brackets and simply write $\alpha\in \N_0$. We set
\begin{align*}
      h^{(\alpha)}(z):=D^\alpha h (z)=\frac{\partial D^{|\alpha|}h}{\partial ^{\alpha_1}z_1\dots\partial ^{\alpha _d}z_d } (z),
\end{align*}
for sufficiently regular maps $h:U\rightarrow \C$ and some open set $U\subseteq \C^d$ (or $U\subseteq \Rset^d$), with $z\in U$, and  $\|h\|_\infty:=\sup_{z\in U}\|h(z)\|$.
Moreover, we write $\nabla h(z)$ and $\nabla^2 h(z)$ to denote the gradient and Hessian matrix of $h$ at $z$, respectively. For $d=1$ we also write $h'$ and $h''$, respectively.

\subsubsection{The space of sequences}\label{sec212}
We introduce the notation for representing power series in $d$ variables. Throughout, we use bold letters to denote sequences $\u:=(\u_\alpha)_{\alpha\in\N_0^d}$ with $\u_
\alpha\in\C$ indexed by multi-indices and set $|\u|:=(|\u_\alpha|)_{\alpha\in\N_0^d}$. To denote vectors and matrices of sequences we then make use of underlining. Specifically, we write
$$\underline \u:=(\underline\u^1,\ldots,\underline\u^d)\qquad\text{and}\qquad \underline{\underline \u}:=\begin{pmatrix}
\underline{\underline \u}^{11} & \cdots & \underline{\underline \u}^{1d}\\
\vdots&\ddots&\vdots\\
\underline{\underline \u}^{d1} & \cdots & \underline{\underline \u}^{dd}
\end{pmatrix},$$ 
where $\underline\u^i=(\underline\u^{i}_{\alpha})_{\alpha\in\N_0^d}$ and $\underline{\underline\u}^{ij}=(\underline{\underline\u}^{ij}_{\alpha})_{\alpha\in\N_0^d}$ for $\underline\u^{i}_{\alpha},\underline{\underline\u}^{ij}_{\alpha}\in \C$ . We let $\mathbf{1}:=(\mathbf{1}_{\alpha})_{\alpha\in \N^d_0}$ denote the sequence such that $\mathbf{1}_{\alpha}=1$ if $\alpha=(0,\dots,0)$ and $\mathbf{1}_{\alpha}=0$ otherwise. To simplify the notation, we denote by $(\epsilon_i)_{i\in\{1,\dots,d\}}$ the canonical basis of $\Rset^d$ and write
$$\underline \u^{\epsilon_i}:=\underline\u^{i},\qquad \text{and}\quad \underline {\underline\u}^{2\epsilon_i}:=\underline {\underline\u}^{ii},\qquad \underline {\underline\u}^{\epsilon_i+\epsilon_j}:=\underline {\underline\u}^{ij}.$$
This is useful to access also the components of $\underline \u$ and  $\underline{\underline \u}$ via the multi-index-notation, namely $\underline \u^\beta$ and $\underline{\underline \u}^\beta$ for $|\beta|=1,2$, respectively. 

Finally, in this paper integrals of sequence-valued maps are always computed componentwise. Precisely, given a measure $F$ on a measurable space $E$ and a map $( (\mathbf{f}(\cdot))_\alpha)_{\alpha\in \N_0^d}$ on $E$ such that $ (\mathbf{f}(y))_\alpha\in\C$,  we define 
$$\Big(\int_{E}{\mathbf{f}}(y) \ F(dy)\Big)_\alpha:= \int_{E}({\mathbf{f}}(y))_\alpha \ F(dy),$$
whenever the involved quantities are well defined. The same extends to vector and matrices of sequence valued maps.

\subsubsection{The set of holomorphic functions on polydiscs}\label{sec213}
Let $d\in \N$, $R=(R_1,\dots, R_d)\in (0,\infty] ^d$, $a\in \C^d$ and denote by $P^d_R(a)$ the complex \emph{polydisc} centered at $a$ with \emph{polyradius} $R$:
\begin{align*}
&P_R^d(a):=\{z\in \C^d \colon |z_i-a_i|< R_i \ \text{for all }i\in\{1,\dots,d\}\},
\end{align*}
where $|\cdot|$ denotes the complex modulus. This in particular implies that
\begin{align*}
    P_R^d(a)=P_{R_1}^1(a_1)\times \dots\times P_{R_d}^1(a_d)\subseteq \C^d, 
\end{align*}
where $P_{R_j}^1(a_j)$ denotes the complex disc centered at $a_j$ and radius $R_j$. Observe that for $R_j=\infty$ we get $P_{R_j}^1(a_j)=\C$.
If $R\in (0,\infty)^d$ we let $T_R^d(a)$ denote the boundary of the polydisc $P_R^d(a)$, also called \emph{polytorus}, defined via
\begin{equation}\label{eqn5}
    T_R^d(a):=\{z\in \C^d \colon |z_i-a_i|= R_i \ \text{for all }i\in\{1,\dots,d\}\}.
    \end{equation}
    The closure $\overline{P_R^d(a)}$ of the polydisc is  given by $\overline{P_R^d(a)}=\{z\in \C^d \colon |z_i-a_i|\leq R_i \ \text{for all }i\}$.
    Moreover, for any $R\in (0,\infty]$ we denote the polyradius $(R,\ldots,R)$ again by $R$. Given two polyradii  $M,N\in (0,\infty]^d$, we write $M>N$ if for all $j=1,\dots,d$ $M_j>N_j$.
  
We denote by $H(P_R^d(0), \C^m)$, $m\in \N$, the set of holomorphic functions on $P_R^d(0)$ with values in $\C^m$ (see Definition~\ref{def: holo functions}). When $m=1$, we simply write  $H(P_R^d(0))$. When a holomorphic function on a polydisc has a power series representation, it can be identified through the corresponding coefficients (see Proposition~\ref{propclass}\ref{class:it3}).
Setting
\begin{equation}\label{eqn2}
    |\u|_z:=\sum_{\alpha\in \N_0^d}\Big|\u_\alpha \frac{z^\alpha}{\alpha!}\Big|,
\end{equation}
 we define
\begin{align*}
    \mathcal{P}_R^d(0)^*&:=\{\u=(\u_\alpha)_{\alpha\in\N_0^d}\ :\ \u_\alpha \in \mathbb{C} \text{ and }|\u|_z <\infty  \text{  for all } z\in P_R^d(0)\}.
\end{align*}
For $\u\in \mathcal{P}_R^d(0)^*$, we let $h_\u\in H(P^d_R(0))$ denote the holomorphic function on $P_R^d(0)$ determined by the corresponding convergent power series
\begin{align*}
    h_\u(z):=\sum_{\alpha\in \N_0^d}\u_\alpha \frac{z^\alpha}{\alpha!}, \qquad z\in P_R^d(0).
\end{align*}
Similarly, we write $\underline{\u}\in   (\PP)^d$, $\underline{\underline {\u}}\in (\PP)^{d \times d}$ if each $\underline{\underline {\u}}^{ij},\underline{\u}^i\in \PP$ and denote by $h_{\underline{\u}}\in H(P^d_R(0),\C^d)$ and $h_{\underline{\underline {\u}}}\in H(P^d_R(0),\C^{d\times d})$ the corresponding holomorphic functions.

\subsubsection{Operations on holomorphic functions and sequences}\label{sec 214}
Now we introduce the operations on holomorphic functions that will be used throughout the paper and describe how these can be translated into operations on the corresponding sequence of coefficients, and vice-versa. The relevant operations in our setting are algebraic operations, differentiation and exponentiation.
\begin{itemize}
\item {\textbf{Linear operations}:} Through componentwise linear operations we obtain the relation
$$\lambda h_{\u}(z)+\mu h_{\v}(z)=h_{\lambda \u+\mu \v}(z)$$
for each $z\in \P$, $\u,\v\in \PP$ and $\lambda,\mu\in \C$.
\item{\textbf{Product}:} We consider the symmetric bilinear map on $\PP$ given by
\begin{align*}
\PP &\times \PP \longrightarrow \PP\\
&(\u,\v)\longmapsto\u\ast\v
\end{align*}
where $\u\ast\v:=((\u\ast\v)_\alpha)_{\alpha\in \N_0^d}$ for 
\begin{align*}(\u\ast\v)_\alpha=\sum_{\gamma+\beta=\alpha} \frac{\alpha!}{\beta!\gamma!}\u_{\beta}\v_{\gamma}, \quad \alpha\in\N_0^d.
\end{align*}
Notice that for all $z\in \P$ and $k\in \N$
$$h_{\u}(z)h_{\v}(z)=h_{\u\ast\v}(z)\qquad\text{and}\qquad  (h_{\u}(z))^k=h_{\u^{\ast k}}(z),$$
where $\u^{\ast 1}=\u$ and  $\u^{\ast k}:=\u^{\ast (k-1)}\ast\u$ for each $k>1$. We also set $\u^{\ast 0}:=(1,0,0,\ldots)$.

Through the multi-index notation, these representations extend to vector-valued functions. More precisely, setting $\underline \u:=(\underline\u^1,\ldots,\underline\u^d) \in (\PP)^{d}$ we define $\underline{\u}^{\ast \beta}:=(\underline\u^1)^{\ast \beta_1}\ast\cdots\ast (\underline\u^d)^{\ast \beta_d}$ and then get
$$(h_{\underline{\u}}(z))^\beta=h_{\underline{\u}^{\ast \beta}}(z).$$

     \item {\textbf{Differentiation}:} For $\u \in \PP$ and $\beta\in \N^d_0$ we set  $\textbf{u}^{(\beta)}:=(\u_{\alpha+\beta})_{\alpha\in \N_0^d}$. Observe that $\u^{(\beta)}\in\PP$ and 
 \begin{align*}
    h_\textbf{u}^{(\beta)}(z)=\sum_{\alpha\in \N^d_0} \u_{\alpha+\beta }\frac{z^\alpha}{\alpha!} =h_{\textbf{u}^{(\beta)}}(z),
\end{align*}
for all $z\in \P$.

\item {\textbf{Exponentiation}:}  For $\u\in\PP$, we denote by $\textbf{exp*}(\u)\in\PP$ the coefficients determining the power series representation on $P^d_R(0)$ of $\exp(h_\u)$, namely $$\exp(h_\u(z))=h_{\textbf{exp*}(\u)}(z),$$
for all $z\in \P$.
\item {\textbf{Composition}:} 
Fix $\underline{\v}\in ( {\mathcal{P}_R^d(0)}^*)^d$ and $\u\in  {\mathcal{P}^d_N(0)}^{*}$ for $N$ such that
$$N_j>\sup_{z\in \mathcal{P}_R^d(0)}\big(|z_j|+|h_{\underline \v}(z)_j|\big).$$
Then $ h_\u(\cdot+h_{\underline{\v}}(\cdot))\in H(\P)$. If we additionally assume that 
$$N_j>\sup_{z\in \mathcal{P}_R^d(0)}\big(|z_j|+| \underline\v^j|_z\big),$$
setting
$$(\u\circ^s\underline{\v})_\alpha:= \sum_{\beta \in \N^d_0}\frac{1}{\beta!}  (\u^{(\beta)} \ast \underline{\v}^{\ast \beta})_\alpha$$
it holds $\u\circ^s\underline{\v}\in\PP$ and
$$h_{\u\circ^s\underline{\v}}(z)=h_\u(z+h_{\underline{\v}}(z))$$
for all $z\in \P$. The superscript $\ ^s$ is mnemonic for ``shift''.

\begin{proof}
The first claim follows since the composition of holomorphic functions is holomorphic (see Proposition 1.2.2. in \cite{SC:05}). Next, observe that since for all $z\in \P$ we have $|z_j|+h_{|\underline{\v}^j|}(|z|)<N_j$, we get $h_{|\u|}(|z|+h_{|\underline{\v}|}(|z|))<\infty$. Moreover,  by Proposition~\ref{propclass}\ref{class:it3} for all $z\in \P$ we get
 \begin{align*}
    h_{|\u|}(|z|+h_{|\underline{\v}|}(|z|))=&\sum_{\beta\in \N^d_0} \frac{1}{\beta!}  h^{(\beta)}_{|\u|}(|z|)h_{|\v|}(|z|)^{\beta}\\
    =&\sum_{\beta\in \N^d_0} \frac{1}{\beta!}\sum_{\alpha\in\N^d_0} (|\u|^{(\beta)}\ast |\underline{\v}|^{\ast \beta} )_\alpha \ \frac{|z|^\alpha}{\alpha!}\\
    =&\sum_{\alpha\in\N^d_0}\Big( \sum_{\beta\in \N^d_0} \frac{1}{\beta!}(|\u|^{(\beta)}\ast |\underline{\v}|^{\ast \beta} )_\alpha\Big)  \frac{|z|^\alpha}{\alpha!}.
\end{align*}
 Thus, $|\u|\circ^s|\underline{\v}|\in \PP$. Since   $|(\u\circ^s\underline{\v})_\alpha|\leq (|\u|\circ^s|\underline{\v}|)_\alpha$ the claim follows.
\end{proof}
\end{itemize}

\subsection{Jump-diffusion processes}\label{sec: jump-diff}
Let $S\subseteq\Rset^d$, $b:S \rightarrow\Rset^d$, $a:S \rightarrow\mathbb{S}^d_+$ measurable functions and $K(\cdot,d\xi)$ a transition kernel from $S$ to $\Rset^d$ which satisfies $K(x,\{0\})=0$,  and $\int_{\Rset^d} \|\xi\|\wedge\|\xi\|^2K(x,d\xi)<\infty,$ for all $x\in S$. 
Let $M(S;\C)$ denote the set of measurable maps on $S$ with values in $\C$, and consider the operator $\Acal:\Dcal(\Acal)\to M(S;\C)$ such that 
\begin{align*}
  \mathcal{A}f(x)=\nabla f(x)^\top b(x)+\frac{1}{2}\text{Tr}(a(x)\nabla ^2f(x))+\int_{\Rset^d}f(x+\xi)-f(x)-\nabla f(x)^\top\xi  \  K(x,d\xi), 
\end{align*}
for each $ x\in S$ and $f\in \mathcal{D}(\mathcal{A})$, where 
\begin{align*}
    \mathcal{D}(\mathcal{A}):=\{f\in C^2(\Rset^d;\mathbb{C})  \ : \forall x\in S \ \int_{\Rset^d }|f(x+\xi)-f(x)-\nabla f(x)^\top \xi |\ K(x,d\xi)<\infty \}.
\end{align*}
In particular, observe that $\D(\A)$  includes all bounded $f\in C^2(\Rset^d;\mathbb{C})$. 

Fix then $T>0$. We say that $X=(X_t)_{t\in[0,T]}$  is an \emph{$S$-valued jump-diffusion} with characteristics
\begin{align*}
    \Bigg(\int_0^\cdot b(X_{s^-})ds,\int_0^\cdot a(X_{s^-})ds,K(X_{s^-},d\xi)ds\Bigg)
\end{align*}
if $X$ is a special \cadlag semimartingale on some filtered probability space $(\Omega, \mathcal{F},(\mathcal{F}_t)_{t\in[0,T]},\mathbb{P})$ such that for all bounded $f\in C^2(\Rset^d;\mathbb{C})$ the process $N^f:=(N^f_t)_{t\in[0,T]}$ given by
\begin{align}\label{eq:localmart}
  N_t^f:=  f(X_t)-f(X_0)-\int_0^t \A f(X_s)ds, \quad t\in [0,T]
\end{align}
defines a local martingale\footnote{See  Theorem II.2.42 in \cite{JS:87}.}. We refer to $\A$ as the \emph{extended generator} of $X$. In this paper, we stick to the truncation function $\chi (\xi)=\xi$ and for simplicity we refer to the coefficients $(b,a,K)$ as characteristics of the the jump-diffusion $X$.

Next, we introduce a class of functions that will form a subset of the domain of the extended generator of the process under consideration and discuss some key properties of the transition kernels.

\subsubsection{Convergent power series on a given set}\label{sec: convergent power series on a given set}
Here we introduce the space of holomorphic functions defined on polydiscs containing a given subset $ S\subseteq  \Rset^d$, which will serve as the state space of a jump-diffusion process. For  $i\in \{1,\dots,d\}$ set
$$R_i(S):=\sup\{|x_i|\ :  x\in S\}\qquad \text{and}\qquad R(S):=(R_1(S),\dots,R_d(S)).$$
Note that  
 $P^d_{R(S)+{\varepsilon}}(0)$ includes  $S$ for each $\varepsilon>0$, and define
 $$\Ocal(S):=\bigcup_{\vare>0}\Ocal_\vare(S)$$
 for
\begin{align*}
    \mathcal{O}_\varepsilon(S):=\Big\{f:S\to \mathbb{C}\colon f=h|_S \text{ for some } h\in H(P^d_{R(S)+{\varepsilon}}(0))\Big\}.
\end{align*}
For a function $f:A\to \C$ for some $S\subseteq A\subseteq \C^d$, we write $f\in \Ocal(S)$ if $f|_S\in \Ocal(S)$. Observe that given $f_1,f_2\in\mathcal{O}(S)$ it holds $\lambda f_1+\mu f_2\in \Ocal(S)$ for each $\lambda,\mu\in \R$, showing that $\Ocal(S)$ is a linear space. Maps in $\Ocal(S)$ have two important properties:
\begin{enumerate}
    \item they admit a power series representation: setting
\begin{align}\label{eq: set Sstar}
    \S^*:=\{\u\in \mathcal{P}^d_{R(S)}(0)^*  \colon h_\u=h|_{P^d_{R(S)}(0)}  \text{ for some } h\in H(P^d_{R(S)+{\varepsilon}}(0))\text{ and } \varepsilon>0\},
\end{align}
it holds that for all $f\in \mathcal{O}(S)$, $f=h_\u|_S$, for some $\u\in \SS$  (see Proposition~\ref{propclass}\ref{class:it3}). Notice that the power series representation of each $f\in \mathcal{O}(S)$ might not be unique. For the purposes of this paper, this is however not relevant. We only need such a representation to exist (see Section~\ref{sec: the holomorphic formula} and in particular Remark~\ref{rem2}\ref{rem2_2});
\item\label{it:drift0} on $S$ they coincide with the restriction of a complex-valued smooth function on $\R^d$, making them eligible to be elements of $\Dcal(\Acal)$. It is important to observe that if $\Acal$ is the extended generator of $X$, then $\Acal f|_S$ just depends on $f|_S$ for all $f\in \mathcal{D}(\A)$. This can be proved using that for each $f\in \mathcal{D}(\A)$ such that $f|_S\equiv0$ the process $(f(X_t))_{t\geq 0}$ needs to have vanishing drift. For similar results in different contexts see for instance Theorem~2.8 in \cite{CLS:18} or the discussion after Definition 2.3 in \cite{LS:20}. This observation in particular implies that $K(x,(S-x)^c)=0$, where $(S-x)^c$ denotes the complement of the set $S-x$.
\end{enumerate}
\subsubsection{Kernels extension}
Similarly as in the previous section where we were interested in the restriction to $S$ of holomorphic functions defined on polydiscs containing $S$, we are now interested in kernels with the same type of property. 

For a transition kernel $K(x,d\xi)$ from $S$ to $\R^d$ we write $K\in\Ocal_\vare(S)$ if for all $x\in S$ , $K(x,d\xi)=K_\vare(x,d\xi)$, for some transition kernel $K_\vare(z,d\xi)$ from $S_\vare$ to $\C^d$ where $S_\vare$ denotes an open set in $\mathbb{C}^d$ such that $S\subseteq S_\vare \subseteq  P^d_{R(S)+{\vare}}(0)$, for some $\vare>0$. Moreover, we require that for all $|\beta|\geq 2$
\begin{equation}\label{eqn1}
\int_{\C^d}\xi^\beta K_\vare(\cdot,d\xi)\in \Big\{f:S_\vare \to \C\colon f=h|_{S_\vare} \text{ for some } h\in H(P^d_{R(S)+{\vare}}(0))\Big\}.
\end{equation}
Observe in particular that for each $K\in \Ocal_\vare(S)$ it holds
$$\int_{\R^d}\xi^\beta K(\cdot,d\xi)\in \Ocal_\vare(S),
$$
for each $|\beta|\geq2$. 
Also in this context we write $\Ocal(S):=\bigcup_{\vare>0}\Ocal_\vare(S)$.

\section{Holomorphic jump-diffusions}\label{sec:holomorphic_jd}
Fix now $S\subseteq \R^d$ and $T>0$. 
\begin{definition}\label{def:holomorphic process}
    Let $X=(X_t)_{t\in[0,T]}$  be an $S$-valued jump-diffusion with extended generator $\mathcal{A}$ and fix a linear subset $\mathcal{V}\subseteq \mathcal{O}(S)$.  We say that $X$ is an $S$-valued $\mathcal{V}$-\textit{holomorphic process} if for each $f\in\Vcal$ it holds $f\in\Dcal(\Acal)$,  $\mathcal{A}f\in \mathcal{O}(S)$, and  the process $N^f$ introduced in equation \eqref{eq:localmart} defines a local martingale.
\end{definition}
Let $X=(X_t)_{t\in [0,T]}$ be an $S$-valued $\mathcal{V}$-holomorphic process, for some linear subset $\mathcal{V}\subseteq \mathcal{O}(S)$ and set 
\begin{align}\label{eq: V^*}
    \mathcal{V}^*:=\{\u\in \SS \colon h_\u=h|_{{P}^d_{R(S)}(0)},  \ h\in \mathcal{V}\},
\end{align}
for $\SS$ introduced in equation \eqref{eq: set Sstar}. Following the discussion on the properties of the functions in $\mathcal{O}(S)$ in Section~\ref{sec: convergent power series on a given set}, one can see that there always exists a linear map $ L: \mathcal{V}^*\rightarrow \mathcal{O}(S),$
such that for all $\u\in\mathcal{V}^*$,
\begin{align}\label{eq: Ah_u=h_Lu}
   \mathcal{A}h_\u=h_{L(\u)}|_S. 
\end{align} Observe that since by the identity theorem for holomorphic functions (see Proposition~\ref{propclass}\ref{class:it identity}) the power series representation of the maps in $\mathcal{O}(S)$ is not unique, there might be more than one map $L$ (corresponding to different representations of the functions $\mathcal{A}h$ on $S$, $h\in \mathcal{V}$) for which \eqref{eq: Ah_u=h_Lu} holds true. However, we only need such a map to exist (see Section~\ref{sec: the holomorphic formula} and in particular Remark~\ref{rem2}\ref{rem2_2}).

\begin{remark}\label{rem: poly processes tensor algebra}
    Recall the notion of polynomial jump-diffusions given in Definition 1 in \cite{FL:20} and note that the class of holomorphic processes is a considerable extension of the class of polynomial jump-diffusions. It is also interesting to observe that for an $S$-valued $\mathcal{V}$-holomorphic process $X=(X_t)_{t\in[0,T}$, the infinite-dimensional process 
    \begin{align}\label{eq: boldX infinite}
        \mathbb{X}:=((1, \mathbb{X}_t^{1},\mathbb{X}^{2}_t,\dots,\mathbb{X}^{i}_t,\dots))_{t\in [0,T]},
    \end{align}
    where, for $i\in \N$ and all $t\in[0,T]$, $$\mathbb{X}^{i}_t:=\Big(\frac{1}{\alpha^1!}X_t^{\alpha^1},\dots,\frac{1}{\alpha^{k_i}!}X_t^{\alpha^{k_i}}\Big)\in \Rset^{k_i},\qquad \alpha^j\in \N^d_0\text{ with }|\alpha^j|=i$$ 
    where $k_i$ is the number of multi-indices $\alpha\in \N^d_0$ such that $ |\alpha|=i$,
    can be interpreted as a $\mathcal{V}^*$-polynomial jump-diffusion on the extended tensor algebra of $\Rset^d$, in the sense of Definition~3.17 in \cite{CST:23}, for $\mathcal{V}^*$ defined in \eqref{eq: V^*}.
\end{remark}

    \subsection{Characteristics of holomorphic jump-diffusions}\label{sec: section 2.1}
In this section, we provide a discussion on some sufficient and necessary conditions for a jump-diffusion to be a holomorphic process. We do not elaborate on existence results here.  We assume instead the existence of a jump-diffusion process on a certain state space $S$ and study the conditions on its characteristics such that the corresponding process is a holomorphic one. We start with a simple result establishing necessary conditions.
\begin{lemma}\label{lemma:finitepoments}
    Consider a subset $\Vcal$ such that
    \begin{align}\label{eq:poly_subset_V}
        \{p|_S\text{  for some polynomial }p:\R^d\to \R\}\subseteq \Vcal\subseteq \Ocal(S)
    \end{align}
    and let $X=(X_t)_{t\in [0,T]}$ be an $S$-valued $\mathcal{V}$-holomorphic process. Then $\int_{\Rset^d}\|\xi\|^k \ K(x,d\xi)<\infty$ for all  for all $k\geq 2$ and $x\in S$, and the characteristics $(b,a,K)$ of $X$ satisfy
 \begin{align}\label{eq:character_holo}
        &b_i(\cdot)\in \mathcal{O}(S), \quad \text{for all }i\in\{1,\dots,d\},\\
        &a_{ij}(\cdot)+\int_{\Rset^d}\xi_i\xi_j K(\cdot,d\xi)\in\mathcal{O}(S), \quad \text{for all }i,j\in\{1,\dots,d\}^2,\nonumber \\
        &\int_{\Rset^d}\xi^\beta  K(\cdot,d\xi)\in\mathcal{O}(S), \quad  \text{for all }\beta\in \N^d_0,\ |\beta|\geq3. \nonumber 
    \end{align}
\end{lemma}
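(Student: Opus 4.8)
The plan is to exploit the hypothesis in two complementary ways: since $\{p|_S : p\text{ polynomial}\}\subseteq\mathcal V$, every monomial $y\mapsto y^\beta$ (viewed on $S$) lies in $\mathcal V$, so the $\mathcal V$-holomorphic property guarantees both that $y^\beta\in\mathcal D(\mathcal A)$ (an integrability statement) and that $\mathcal A y^\beta\in\mathcal O(S)$. I will repeatedly test these two facts against well-chosen monomials and isolate the desired quantity. Two structural facts about $\mathcal O(S)$ are used throughout: it is a linear space, it is closed under products (the product of two functions holomorphic on a common polydisc $P^d_{R(S)+\varepsilon}(0)$ is again holomorphic there), and it contains all polynomials since these are entire. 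Consequently, whenever I write $\mathcal A y^\beta$ as a sum of $\int\xi^\gamma K(\cdot,d\xi)$-terms multiplied by monomials plus terms already known to lie in $\mathcal O(S)$, I may solve for the top-order jump integral.

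First I would establish the moment bounds, which I expect to be the main obstacle. For $f(y)=y_i^2$ one computes $f(x+\xi)-f(x)-\nabla f(x)^\top\xi=\xi_i^2$, so the domain condition $y_i^2\in\mathcal D(\mathcal A)$ gives $\int_{\R^d}\xi_i^2\,K(x,d\xi)<\infty$ directly, with no indefinite-sign remainder. For general $m$, taking $f(y)=y_i^{2m}$ yields $g_x(\xi):=f(x+\xi)-f(x)-\nabla f(x)^\top\xi=G(\xi_i)$, where $G$ is the degree-$2m$ polynomial $G(s)=(x_i+s)^{2m}-x_i^{2m}-2mx_i^{2m-1}s$ with leading coefficient $1$. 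A naive induction on the degree fails, because the binomial expansion of $g_x$ contains all intermediate powers $\xi_i^j$, $2\le j\le 2m-1$, of indefinite sign; instead I would use asymptotic domination of the leading term: there is $N=N(x,m)$ with $G(s)\ge\tfrac12 s^{2m}$ for $|s|\ge N$, so $\int_{|\xi_i|\ge N}\xi_i^{2m}K\le 2\int|g_x|K<\infty$, while on $\{|\xi_i|<N\}$ one bounds $\xi_i^{2m}\le N^{2m-2}\xi_i^2$ and invokes the already-established second moment. Hence $\int\xi_i^{2m}K(x,d\xi)<\infty$ for all $m$, and since $\|\xi\|^{2m}\le d^m\sum_i\xi_i^{2m}$ for even exponents together with the interpolation $\|\xi\|^k\le\tfrac12(\|\xi\|^{k-1}+\|\xi\|^{k+1})$ for odd $k$, all moments $\int\|\xi\|^k K(x,d\xi)$, $k\ge2$, are finite; this in particular makes every integral $\int\xi^\gamma K(\cdot,d\xi)$ in the statement well defined.

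Next I would read off the first two characteristic conditions. Testing $f(y)=y_i$ gives $\nabla f=\epsilon_i$, $\nabla^2 f=0$ and a vanishing jump remainder, so $\mathcal A y_i=b_i\in\mathcal O(S)$. Testing $f(y)=y_iy_j$ gives $f(x+\xi)-f(x)-\nabla f(x)^\top\xi=\xi_i\xi_j$ and $\tfrac12\mathrm{Tr}(a\nabla^2 f)=a_{ij}$, so
\[
\mathcal A(y_iy_j)(x)=x_j b_i(x)+x_i b_j(x)+a_{ij}(x)+\int_{\R^d}\xi_i\xi_j\,K(x,d\xi).
\]
Since $\mathcal A(y_iy_j)\in\mathcal O(S)$ and $x_jb_i,x_ib_j\in\mathcal O(S)$ (products of the entire coordinate functions with the $b$'s), subtracting yields $a_{ij}+\int\xi_i\xi_j K\in\mathcal O(S)$. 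Note that $a_{ij}$ alone need not lie in $\mathcal O(S)$; only the combination $\tilde a_{ij}:=a_{ij}+\int\xi_i\xi_j K$ does, and it is exactly this combination that reappears in the second-order part below.

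Finally, for $|\beta|\ge3$ I would proceed by induction on $|\beta|$. Writing the Taylor expansion of $y^\beta$,
\[
f(x+\xi)-f(x)-\nabla f(x)^\top\xi=\sum_{2\le|\gamma|\le|\beta|,\ \gamma\le\beta}\binom{\beta}{\gamma}x^{\beta-\gamma}\xi^\gamma,
\]
and regrouping the $|\gamma|=2$ contributions with the diffusion term, the order-two part of $\mathcal A y^\beta$ collapses to $\tfrac12\sum_{i,j} f^{(\epsilon_i+\epsilon_j)}(x)\,\tilde a_{ij}(x)$, which lies in $\mathcal O(S)$ because each $\tilde a_{ij}\in\mathcal O(S)$ and each $f^{(\epsilon_i+\epsilon_j)}$ is a monomial; likewise $\nabla f^\top b\in\mathcal O(S)$. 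Separating out the top term $\gamma=\beta$, which is $\int\xi^\beta K(x,d\xi)$, I obtain
\[
\int_{\R^d}\xi^\beta K(x,d\xi)=\mathcal A y^\beta(x)-\nabla f(x)^\top b(x)-\tfrac12\sum_{i,j} f^{(\epsilon_i+\epsilon_j)}(x)\tilde a_{ij}(x)-\sum_{3\le|\gamma|<|\beta|,\ \gamma\le\beta}\binom{\beta}{\gamma}x^{\beta-\gamma}\int_{\R^d}\xi^\gamma K(x,d\xi).
\]
By the induction hypothesis each $\int\xi^\gamma K$ with $3\le|\gamma|<|\beta|$ lies in $\mathcal O(S)$, so multiplying by the monomials $x^{\beta-\gamma}$ and using $\mathcal A y^\beta\in\mathcal O(S)$ keeps the entire right-hand side in $\mathcal O(S)$; the base case $|\beta|=3$ has an empty last sum. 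This yields $\int\xi^\beta K\in\mathcal O(S)$ for all $|\beta|\ge3$ and completes the proof.
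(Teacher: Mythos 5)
Your proof is correct and follows essentially the same route as the paper, whose proof is a one-line deferral to Lemma~1 of \cite{FL:20} ``by applying $\mathcal{A}$ to all monomials'': you test the generator on monomials, extract the moment bounds from the domain condition via leading-term domination for $y_i^{2m}$, and then identify $b_i$, $a_{ij}+\int\xi_i\xi_j K$, and $\int\xi^\beta K$ inductively using linearity and closure of $\mathcal{O}(S)$ under products. Your write-up simply makes explicit the details that the paper leaves to the cited reference.
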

\begin{proof}
    The proof follows the proof of Lemma~1 in \cite{FL:20} by applying $\mathcal{A}$ to all monomials.
\end{proof}

\begin{remark}\label{rem: poly processes are holomorphic}
Each $S$-valued polynomial jump-diffusion is an $S$-valued $\Vcal$-holomorphic process for $\Vcal=   \{p|_S\text{  for some polynomial }p:\R^d\to \R\}$. However, holomorphic processes allow for more general drift and  diffusion coefficients (e.g.~higher order polynomials and holomorphic functions) as well as more general kernels. 
For example, excluding   higher order polynomials from $\Vcal$ would permit to include in the class of holomorphic processes jump-diffusions with kernels that do not admit all moments (see Corollary~\ref{prop:unboundedjs} and the discussion thereafter). An extreme example in this sense is given by $\Vcal$ consisting entirely of bounded holomorphic functions. This shows how holomorphic processes substantially extend the class of polynomial jump-diffusions.
\end{remark}
In the classical polynomial case, if the characteristics satisfiy the type of conditions expressed by equation \eqref{eq:character_holo}, the corresponding jump-diffusion process is automatically polynomial (see Lemma 1 in \cite{FL:20}).
For holomorphic processes, this is not the case. Indeed, in addition to the holomorphic dependence of the characteristics  on the state variables  further assumptions have to be made. Before illustrating this in the following theorem,
we introduce some notation that will be used throughout the paper.

\begin{notation}\label{not1}
    Throughout, given an $S$-valued jump-diffusion $X$ with  drift and diffusion $(b,a)$ such that $b_i\in \Ocal(S)$ and $a_{ij}\in \Ocal(S)$, we let 
    $$
     \underline{\b}\in (\S^*)^d, \qquad \underline{\underline{\a}}\in (\S^*)^{d\times d}
    $$ denote some coefficients determining the power series representation of $b$ and $a$, respectively. This in particular implies that
    $$b_i=h_{\underline \b^i}|_S\qquad\text{and}\qquad a_{ij}=h_{\underline {\underline \a}^{ij}}|_S.$$
    For each $|\beta|\geq2$ and  transition kernel $K\in\mathcal O_\vare(S)$ we then denote by $\m^\beta\in \mathcal{P}^d_{R(S)+\vare}(0)^*$ 
     the coefficients determining the power series representation  of   $\int_{\C^d}\xi^\beta K_\vare(\cdot,d\xi)$ and thus satisfying
    $$\int_{\C^d}\xi^\beta K_\vare(\cdot,d\xi)
    = h_{\m^\beta}|_{S_\vare}.$$
    Similarly, for a  transition kernel $K$  with  $ \int_{\Rset^d}\xi^\beta K(\cdot,d\xi)\in \mathcal{O}(S),$ we let
    $   \m^\beta\in \S^*$
    be some coefficients such that $\int_{\Rset^d}\xi^\beta K(\cdot,d\xi)
    =
    h_{\m^\beta}|_S$.
\end{notation}

Notice that by the assumptions in Section~\ref{sec: jump-diff}, for all $\alpha\in \N^d_0$ it holds $\underline{\underline \a}^{ij}_\alpha$, ${\underline \b}^i_\alpha, \m^\beta_\alpha \in \mathbb{R}$.
The proof of the following theorem can be found in Appendix~\ref{proofpropgkernel}. 
\begin{theorem}\label{prop: holo generalkernel}
Let $X=(X_t)_{t\in [0,T]}$ be an $S$-valued jump-diffusion with characteristics $(b,a,K)$ and extended generator $\Acal$. Fix $\vare>0$ and assume that $b_j,a_{ij},K\in \Ocal_\vare(S)$.
Suppose that the following conditions hold true.
\begin{enumerate}
\item \label{iitheorem1} for all $z\in {S_\vare}$, $ \int_{\|\xi\|>1} \exp(|\xi_1|+\dots+|\xi_d|) K_\vare(z,d\xi)<\infty;$
    \item \label{iiitheorem1}the real valued map
$\sum_{|\beta|\geq2}\frac{1}{\beta!}|h_{\m^\beta}(\cdot)|
$
is locally bounded on $P^d_{R(S)+\vare}(0)$.
\end{enumerate}
Fix $G=(G_1,\ldots,G_d)$ such that 
\begin{equation}\label{eqGj}
    G_j>\sup_{z\in {S_\vare}}\sup_{\xi\in \textup{supp}(K_\vare(z,\cdot))}|z_j|+|\xi_j|,
\end{equation}
and set
\begin{equation}\label{eq:Vgeneralkernel}
\begin{aligned}
    \mathcal{V}&\subseteq\{h\in H(P^d_G(0))\ \colon
      (h^{(\beta)})_{|\beta|\geq2} \text{ is locally uniformly bounded on }P^d_G(0)\},
      \\
      \mathcal{V}^*&:=\{\u\in \SS \colon h_\u=h|_{{P}^d_{R(S)}(0)},  \ h\in \mathcal{V}\}.\\
\end{aligned}
\end{equation}
Then $X$ is an $S$-valued $\mathcal{V}$-holomorphic process and
for all $\u\in\mathcal{V}^*$  
$$\mathcal{A}h_\u=h_{L(\u)}|_S,$$
where $L:\mathcal{V}^*\rightarrow \SS$  is given by
\begin{align}\label{eq:Lgeneralkernel}
    L(\u)_\alpha=\sum_{|\beta|=1}(\u^{(\beta)}\ast \underline{\b}^{\beta})_\alpha+\sum_{|\beta|=2}\frac{1}{\beta!}(\u^{(\beta)}\ast (\underline{\underline{\a}}^{\beta}+\m^{\beta}))_\alpha+\sum_{|\beta|\geq 3}\frac{1}{\beta!}(\u^{(\beta)}\ast \m^{\beta})_\alpha,
\end{align}
for all $\u\in\mathcal{V}^*$, $\alpha\in\N^d_0$.
\end{theorem}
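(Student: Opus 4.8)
The plan is to verify, for every $f=h_\u$ with $\u\in\Vcal^*$, the three defining properties of Definition~\ref{def:holomorphic process}: that $h_\u\in\Dcal(\Acal)$, that $\Acal h_\u=h_{L(\u)}|_S\in\Ocal(S)$ with $L(\u)$ as in \eqref{eq:Lgeneralkernel}, and that $N^{h_\u}$ is a local martingale. The engine of the whole computation is the holomorphic Taylor expansion of $h_\u$. Since $h_\u$ extends to some $h\in\Vcal\subseteq H(P^d_G(0))$, and the choice of $G$ in \eqref{eqGj} guarantees $|x_j|+|\xi_j|<G_j$ for all $x\in S$ and $\xi\in\mathrm{supp}(K_\vare(x,\cdot))$, the point $x+\xi$ lies in the polydisc of convergence of the Taylor series of $h$ at $x$. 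Hence $h_\u(x+\xi)=\sum_{\beta\in\N_0^d}\tfrac{1}{\beta!}h_\u^{(\beta)}(x)\,\xi^\beta$, and subtracting the $|\beta|=0$ and $|\beta|=1$ terms turns the integrand of the jump part into $\sum_{|\beta|\geq2}\tfrac{1}{\beta!}h_\u^{(\beta)}(x)\,\xi^\beta$.

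The drift and diffusion contributions I would treat purely algebraically. Using the differentiation rule $\partial^\beta h_\u=h_{\u^{(\beta)}}$ and the product rule $h_\u h_\v=h_{\u\ast\v}$ from Section~\ref{sec 214}, the first-order term $\nabla h_\u(x)^\top b(x)$ equals $\sum_{|\beta|=1}h_{\u^{(\beta)}\ast\underline{\b}^\beta}(x)$, and $\tfrac12\mathrm{Tr}(a(x)\nabla^2h_\u(x))$ equals $\sum_{|\beta|=2}\tfrac{1}{\beta!}h_{\u^{(\beta)}\ast\underline{\underline{\a}}^\beta}(x)$; here the prefactor $\tfrac12$ together with the number of ordered pairs $(i,j)$ producing a fixed length-two multi-index, combined with the symmetry of $a$, reproduces exactly the factor $\tfrac{1}{\beta!}$. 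For the jump part, interchanging summation and integration and inserting $\int\xi^\beta K(x,d\xi)=h_{\m^\beta}(x)$ gives, via the product rule, $\sum_{|\beta|\geq2}\tfrac{1}{\beta!}h_{\u^{(\beta)}\ast\m^\beta}(x)$. Collecting the three contributions yields precisely $h_{L(\u)}(x)$.

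The main obstacle, and the place where the jumps genuinely complicate matters, is to justify simultaneously that $h_\u\in\Dcal(\Acal)$ and that the interchange of sum and integral above is legitimate, i.e.\ to exhibit a $K(x,\cdot)$-integrable dominating function. This is exactly what the defining bound of $\Vcal$ is designed for: since $(h^{(\beta)})_{|\beta|\geq2}$ is locally uniformly bounded on $P^d_G(0)$, for each $x$ there is a constant $C$ with $\sup_{|\beta|\geq2}|h_\u^{(\beta)}(x)|\leq C$, so that
\begin{align*}
\sum_{|\beta|\geq2}\frac{1}{\beta!}\big|h_\u^{(\beta)}(x)\big|\,|\xi^\beta|
\ \leq\ C\sum_{|\beta|\geq2}\frac{|\xi^\beta|}{\beta!}
\ =\ C\Big(e^{|\xi_1|+\dots+|\xi_d|}-1-\textstyle\sum_{j=1}^d|\xi_j|\Big).
\end{align*}
The right-hand side is $O(\|\xi\|^2)$ as $\xi\to0$, hence $K(x,\cdot)$-integrable on $\{\|\xi\|\leq1\}$ by the standing assumption $\int\|\xi\|\wedge\|\xi\|^2K(x,d\xi)<\infty$, and it is bounded by $C\,e^{|\xi_1|+\dots+|\xi_d|}$ on $\{\|\xi\|>1\}$, which is integrable by condition~\ref{iitheorem1}. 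This single estimate both places $h_\u$ in $\Dcal(\Acal)$ and legitimates the Fubini/dominated-convergence step; it also recovers a posteriori the finiteness of all moments of $K(x,\cdot)$ consistent with Lemma~\ref{lemma:finitepoments}.

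It remains to show $\Acal h_\u\in\Ocal(S)$, i.e.\ $L(\u)\in\SS$, and the local martingale property. For the former I would prove that the series $\sum_{|\beta|\geq2}\tfrac{1}{\beta!}h_\u^{(\beta)}h_{\m^\beta}$ converges locally uniformly on a polydisc containing $S$: on each compact the factors $h_\u^{(\beta)}$ are uniformly bounded by the definition of $\Vcal$, while $\sum_{|\beta|\geq2}\tfrac{1}{\beta!}|h_{\m^\beta}|$ is bounded by condition~\ref{iiitheorem1}, which controls the tails of the series. By the Weierstrass convergence theorem (see Appendix~\ref{appendix:holofunctions}) the limit is holomorphic; adding the two finite lower-order sums identifies its coefficient sequence with $L(\u)$ and shows $L(\u)\in\SS$ with $\Acal h_\u=h_{L(\u)}|_S$. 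Finally, the local martingale property of $N^{h_\u}$ follows from It\^o's formula for semimartingales: the defining property of $X$ pins down its semimartingale characteristics as $(b,a,K)$ (cf.~\cite{JS:87}), so applying It\^o's formula to $h_\u(X)$ and using the pathwise integrability provided by the estimate above identifies $N^{h_\u}$ as the compensated, hence local martingale, part.
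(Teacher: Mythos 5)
Your overall architecture matches the paper's: the Taylor expansion of $h_\u$ at $x$ with radius of convergence controlled by the choice of $G$ in \eqref{eqGj}, the domination estimate via $C(z)=\sup_{|\beta|\geq2}|h_\u^{(\beta)}(z)|<\infty$ together with condition~\ref{iitheorem1} (this is exactly the paper's bound \eqref{eqn4}), and the term-by-term identification of the coefficients of $L(\u)$. The gap lies in the step where you conclude that $\sum_{|\beta|\geq2}\tfrac{1}{\beta!}h_\u^{(\beta)}h_{\m^\beta}$ converges \emph{locally uniformly} and then apply the Weierstrass convergence theorem. Condition~\ref{iiitheorem1} gives local boundedness of $\sum_{|\beta|\geq2}\tfrac{1}{\beta!}|h_{\m^\beta}|$; combined with the bound on the derivatives this yields only (i) absolute pointwise convergence at each point of the polydisc and (ii) uniform boundedness of the partial sums on compacts. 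It does \emph{not} ``control the tails'' uniformly: for a series of nonnegative continuous functions, a locally bounded sum does not imply locally uniform convergence (moving-bump examples). Hence Weierstrass' theorem (Proposition~\ref{propclass}\ref{class:it7}), whose hypothesis is locally uniform convergence, cannot be invoked at this point -- you are assuming precisely what must be proved. The correct tool is the Vitali--Porter theorem (Proposition~\ref{propclass}\ref{class:it2}): the partial sums form a locally uniformly bounded sequence of holomorphic functions converging pointwise on an open set (this is where the open extension $S_\vare$ and the kernel extension $K_\vare$, i.e.\ the hypothesis $K\in\Ocal_\vare(S)$, genuinely enter), and Vitali--Porter upgrades this to locally uniform convergence with holomorphic limit. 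This is exactly the point the paper singles out in Remark~\ref{rem1}\ref{rem1i}; only afterwards can Weierstrass' theorem be used to differentiate the series and identify $D^\alpha(\Acal h_\u)(0)$ with $L(\u)_\alpha$.

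Your treatment of the local martingale property has a second gap of the same nature. You appeal to It\^o's formula and ``the pathwise integrability provided by the estimate above'', but that estimate gives finiteness of $\int|h_\u(x+\xi)-h_\u(x)-\nabla h_\u(x)^\top\xi|\,K(x,d\xi)$ at each \emph{fixed} $x$ only. To conclude via Theorem II.1.8 in \cite{JS:87} one needs the a.s.\ finiteness of the time integral \eqref{localintvar1}, and pointwise finiteness in $x$ is not sufficient: condition~\ref{iitheorem1} is a pointwise condition, so the map $x\mapsto\int_{\|\xi\|>1}\exp(|\xi_1|+\dots+|\xi_d|)\,K(x,d\xi)$ need not be locally bounded and could blow up along the (relatively compact) range of a c\`adl\`ag path. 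The paper closes this with an extra argument: it bounds $\exp(|\xi_1|+\dots+|\xi_d|)1_{\{\|\xi\|>1\}}\leq F(\xi)-1-\nabla F(0)^\top\xi+C'\|\xi\|^2$ with $F(\xi)=\prod_{i=1}^d(e^{\xi_i}+e^{-\xi_i})$, observes that $F\in\Vcal$, and runs the Vitali--Porter argument once more to conclude that $x\mapsto\int F(\xi)-1-\nabla F(0)^\top\xi\,K(x,d\xi)$ belongs to $\Ocal(S)$, hence is locally bounded; together with $\int\|\xi\|^2K(\cdot,d\xi)\in\Ocal(S)$ this makes the compensator integrand locally bounded along the path. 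Without this (or an equivalent local-boundedness argument) the local martingale property does not follow from your estimate.
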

\begin{remark}\phantomsection \label{remark_continuityexp}
\begin{enumerate}
\item   The assumption in equation \eqref{eqGj} is essential to derive the  representation of the operator $L$ as in equation \eqref{eq:Lgeneralkernel} and for the application of the Vitali-Porter theorem, on which the proof of Theorem \ref{prop: holo generalkernel} is based (see Remark \ref{rem1}\ref{rem1i} for a more detailed discussion).  
    \item \label{i__remark_continuityexp}Observe that by Proposition~\ref{propclass}\ref{class:it3}, for each $h\in H(P^d_G(0))$ with $
      (h^{(\beta)})_{|\beta|\geq2}$ locally uniformly bounded on  $P^d_G(0)$ it holds
    \begin{align*}
        |h(z)|=\big|\sum_{|\beta|\geq0}\frac 1 {\beta!}h^{(\beta)} (0)z^\beta\Big|
        \leq C\sum_{|\beta|\geq0}\frac 1 {\beta!}|z|^\beta = C\exp(|z_1|+\dots+|z_d|),
    \end{align*}
    for some $C>0$ and all $z\in {P^d_{G-\delta}(0)}$, with $\delta>0$. This shows how condition~\ref{iitheorem1} in Theorem~\ref{prop: holo generalkernel} is strictly related to the defining property of $\Vcal$. As one can imagine, this implies that one can relax the integrability condition on $K_\vare$ by choosing a more restrictive growth condition on the derivatives of the elements of $\Vcal$. Vice versa, one can obtain the result for a larger set of functions $\Vcal$ by imposing a stronger integrability condition in~\ref{iitheorem1}. This trade-off appears in many results of the paper and is exploited for instance in Corollary~\ref{prop:unboundedjs} (see also Remark~\ref{remark:nofinitemoments}).
    \item\label{itii} Suppose that $S_\vare=P^d_{R(S)+\vare}(0)$. By the monotone convergence theorem it holds
    \begin{align*}
        \sum_{|\beta|\geq2}\frac 1 {\beta!}|h_{\m^\beta}(z)|
    &\leq\sum_{|\beta|\geq2}\frac 1 {\beta!}\int_{\C^d}|\xi|^\beta K_\vare(z,d\xi)\\
    &=\int_{\C^d}\exp(|\xi_1|+\dots+|\xi_d|)-1-(|\xi_1|+\dots+|\xi_d|)K_\vare(z,d\xi).
    \end{align*}
    Using that
    $0\leq\exp(y)-1-y\leq y^21_{\{y\leq 1\}}+\exp(y)1_{\{y>1\}}$ for each $y\in \R_+$, by continuity of $\int_{\C^d}(|\xi_1|^2+\dots+|\xi_d|^2)K_\vare(\cdot,d\xi)$ we can see that condition~\ref{iiitheorem1} of Theorem~\ref{prop: holo generalkernel} is automatically satisfied  if the map $\int_{\|\xi\|>1}\exp(|\xi_1|+\dots+|\xi_d|)K_\vare(\cdot,d\xi)$ is locally bounded on $S_\vare$. 

    \end{enumerate}

\end{remark}
It is important to highlight that when $d=1$, the properties of holomorphic functions in one variable allow us to considerably relax the conditions on the kernel $K$ needed to deduce the result of Theorem~\ref{prop: holo generalkernel}.  This is a consequence of the simplification of the ``Identity theorem'' for holomorphic functions on $\C$ (see Proposition~\ref{propclass}\ref{class:it identity}), which is used in the proof of Vitali-Porter theorem, and on which  the proof of Theorem~\ref{prop: holo generalkernel} 
relies. 
 We illustrate the precise statement in the following corollary, whose proof is analogous to the one of Theorem~\ref{prop: holo generalkernel}. For more details on the differences in this one-dimensional setting, we refer to Remark~\ref{rem1}\ref{rem1ii}. 

\begin{corollary}\label{coro1dimensional1}
Let $S\subseteq \Rset$ be a subset with an accumulation point in $\R$ and $X=(X_t)_{t\in [0,T]}$ be an $S$-valued jump-diffusion with characteristics $(b,a,K)$ and extended generator $\Acal$. Fix $\vare>0$ and assume that $b_j,a_{ij}\in\Ocal_\vare(S)$ and 
\begin{align}\label{eq:Vitali_1dim_moments}
    \int_{\Rset}\xi^\beta K(\cdot,d\xi) \in \mathcal{O}_\vare(S),\quad \text{for all }  \beta\geq2. 
\end{align}
Suppose furthermore that the following conditions hold true.
\begin{enumerate}
\item for all $x\in {S}$, $ \int_{|\xi|>1} \exp(|\xi|) K(x,d\xi)<\infty;$
    \item the real valued map
$\sum_{\beta\geq2}\frac{1}{\beta!}|h_{\m^\beta}(\cdot)|
$
is locally bounded on $P^1_{R(S)+\vare}(0)$.
\end{enumerate}
Fix 
\begin{equation}\label{eqG1dim}
    G>\sup_{x\in {S}}\sup_{\xi\in \textup{supp}(K(x,\cdot))}|x|+|\xi|
\end{equation}
and set $\mathcal{V}$, $\mathcal{V}^*$ as in equation \eqref{eq:Vgeneralkernel}.  Then $X$ is an $S$-valued $\mathcal{V}$-holomorphic process and
for all $\u\in\mathcal{V}^*$  
$$\mathcal{A}h_\u=h_{L(\u)}|_S,$$
where $L:\mathcal{V}^*\rightarrow \SS$  is the operator defined in equation \eqref{eq:Lgeneralkernel}, whose explicit form for $d=1$ reads
\begin{align}\label{eq:Lgeneralkernel_1dim}
    L(\u)_\alpha=(\u^{(1)}\ast \b)_\alpha+\frac{1}{2}(\u^{(2)}\ast (\a+\m^2))_\alpha+\sum_{\beta=3}^\infty\frac{1}{\beta!}(\u^{(\beta)}\ast \m^\beta)_\alpha.
\end{align}
for all $\u\in\mathcal{V}^*$, $\alpha\in\N_0$.
\end{corollary}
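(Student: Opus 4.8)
The plan is to verify the three requirements of Definition~\ref{def:holomorphic process} for every $h_\u$ with $\u\in\Vcal^*$: that $h_\u\in\D(\A)$, that $N^{h_\u}$ is a local martingale, and---the actual content---that $\A h_\u\in\Ocal(S)$ with coefficient sequence $L(\u)$ as in \eqref{eq:Lgeneralkernel_1dim}. Membership in $\D(\A)$ is quick: the defining property of $\Vcal$ bounds $\sup_{\beta\ge2}|h_\u^{(\beta)}(x)|$ locally uniformly, whence $|h_\u(x+\xi)-h_\u(x)-h_\u'(x)\xi|\le C(e^{|\xi|}-1-|\xi|)$; integrating against $K(x,\cdot)$ and splitting at $|\xi|=1$ yields a finite bound through the second moment $h_{\m^2}(x)$ and hypothesis~(i), exactly as in Remark~\ref{remark_continuityexp}\ref{itii}. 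The local martingale property of $N^{h_\u}$ then follows from that of $N^f$ for bounded $f\in C^2$ together with a standard localization along stopping times keeping $X$ in a compact set, on which $h_\u$ agrees with a bounded modification.

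For the pointwise formula on $S$, fix $x\in S$. Condition~\eqref{eqG1dim} guarantees $|x|+|\xi|<G$ for every $\xi\in\mathrm{supp}(K(x,\cdot))$, so the radius of convergence of the Taylor expansion of $h_\u$ around $x$ exceeds $|\xi|$ and $h_\u(x+\xi)-h_\u(x)-h_\u'(x)\xi=\sum_{\beta\ge2}\tfrac1{\beta!}h_\u^{(\beta)}(x)\xi^\beta$. Interchanging summation and integration---licensed by the absolute convergence established above---and using $\int_{\R}\xi^\beta K(x,d\xi)=h_{\m^\beta}(x)$ from \eqref{eq:Vitali_1dim_moments} and Notation~\ref{not1}, the jump part becomes $\sum_{\beta\ge2}\tfrac1{\beta!}h_\u^{(\beta)}(x)h_{\m^\beta}(x)$. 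Adding the drift term $h_\u'(x)h_\b(x)$ and the diffusion term $\tfrac12 h_\u''(x)h_\a(x)$, merging the latter with the $\beta=2$ contribution, and translating the products and derivatives into the $\ast$-product and the shift $\u^{(\beta)}$ via Section~\ref{sec 214}, produces precisely the coefficient sequence $L(\u)$ of \eqref{eq:Lgeneralkernel_1dim}.

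It remains to show that this expression is the restriction to $S$ of a holomorphic function, i.e.\ $\A h_\u\in\Ocal(S)$ and $L(\u)\in\SS$. Mirroring the proof of Theorem~\ref{prop: holo generalkernel}, I would approximate $h_\u$ by its partial sums $h^{[N]}$, which are polynomials, so that $\A h^{[N]}$ extends to the explicit holomorphic function $(h^{[N]})'h_\b+\tfrac12(h^{[N]})''h_\a+\sum_{\beta=2}^N\tfrac1{\beta!}(h^{[N]})^{(\beta)}h_{\m^\beta}$ on $P^1_{R(S)+\varepsilon}(0)$ (shrinking $\varepsilon$ so that this disc lies inside $P^1_G(0)$), whose coefficients converge to those of $L(\u)$. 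Pointwise convergence $\A h^{[N]}(x)\to\A h_\u(x)$ on $S$ follows by dominated convergence as above. The crux is the local uniform boundedness of $(\A h^{[N]})_N$, uniformly in $N$: by a Cauchy estimate on a slightly larger circle combined with the local uniform bound of $\Vcal$ on derivatives of order $\ge2$ one obtains $\sup_N\sup_{\beta\ge2}|(h^{[N]})^{(\beta)}(z)|\le C'$ locally uniformly in $z$, and condition~(ii)---local boundedness of $\sum_{\beta\ge2}\tfrac1{\beta!}|h_{\m^\beta}|$---closes the estimate. Since $S$ has an accumulation point in $\R\subseteq P^1_{R(S)+\varepsilon}(0)$, the Vitali--Porter theorem (Appendix~\ref{appendix:holofunctions})---whose reliance on the identity theorem is valid in one variable precisely because of this accumulation point, see Proposition~\ref{propclass}\ref{class:it identity}---upgrades pointwise convergence on $S$ to locally uniform convergence to a holomorphic limit $g$. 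Then $g|_S=\A h_\u$, so $\A h_\u\in\Ocal(S)$, and the Taylor coefficients of $g$, being limits of those of $\A h^{[N]}$, equal $L(\u)$, giving $L(\u)\in\SS$ and $\A h_\u=h_{L(\u)}|_S$.

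I expect the main obstacle to be precisely this uniform-in-$N$ local boundedness estimate, which is where the trade-off between the growth of the derivatives of elements of $\Vcal$ and the integrability/local-boundedness of the kernel moments (conditions~(i)--(ii)) is exploited; the rest is bookkeeping with the sequence operations of Section~\ref{sec 214}. The one-dimensional setting enters through two simplifications relative to Theorem~\ref{prop: holo generalkernel}: the identity theorem, and hence Vitali--Porter, requires only an accumulation point of $S$ rather than the stronger condition \eqref{eqGj}, and since \eqref{eqG1dim} ranges $\xi$ only over the real supports $\mathrm{supp}(K(x,\cdot))$ for $x\in S$, no complex extension of the kernel $K$ itself is needed---only the holomorphic extensions $h_{\m^\beta}$ of its moments, as assumed in \eqref{eq:Vitali_1dim_moments}.
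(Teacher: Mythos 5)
Your holomorphy argument is essentially sound and is a mild variant of the paper's: the paper applies Vitali--Porter to the partial sums $h'h_{\b}+\tfrac12 h''h_{\a}+\sum_{\beta=2}^{n}\tfrac1{\beta!}h^{(\beta)}h_{\m^\beta}$ (truncating only the jump-moment expansion, never $h$ itself), so the coefficients $L(\u)_\alpha$ drop out directly from Weierstrass' theorem; you instead truncate $h$ into polynomial partial sums, which forces the extra uniform-in-$N$ Cauchy estimate and a double truncation (in $N$ and in $\beta$) whose tails must still be controlled via condition (ii) when identifying the limiting coefficients. Both routes exploit exactly the one-dimensional Vitali--Porter/identity-theorem simplification, which is the point of the corollary, and you identify the two $d=1$ simplifications correctly.

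The genuine gap is the local martingale property of $N^{h_\u}$. Your proposed reduction --- stopping times ``keeping $X$ in a compact set'' plus a bounded modification of $h_\u$ --- fails when $S$ is unbounded, for two reasons. First, a jump process cannot be stopped so as to remain in a prescribed compact set: the jump occurring at the stopping time may leave it. Second, and more fundamentally, $\A$ is nonlocal: even while $X_{s^-}$ ranges in a compact set $D$, $\A f(X_{s^-})$ involves $f(X_{s^-}+\xi)$ for all $\xi$ in the support of $K(X_{s^-},\cdot)$, so a bounded modification $f$ agreeing with $h_\u$ only near $D$ changes $\A f$ \emph{on $D$ itself}; hence the stopped $N^{h_\u}$ is not the stopped $N^{f}$ up to a local martingale, and no reduction to bounded test functions is obtained. (If $S$ is bounded the modification argument is fine --- indeed no localization is needed, since $K(x,(S-x)^c)=0$ --- but the corollary is precisely meant to cover unbounded $S$, e.g.\ L\'evy-type kernels.) The paper instead verifies the pathwise integrability condition \eqref{localintvar1} invoking Theorem II.1.8 in \cite{JS:87}, and this requires upgrading hypothesis (i) --- which gives only \emph{pointwise} finiteness of $\int_{|\xi|>1}e^{|\xi|}K(x,d\xi)$ on $S$ --- to \emph{local boundedness} on $S$. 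This is done by dominating $e^{|\xi|}1_{\{|\xi|>1\}}$ by $F(\xi)-1-F'(0)\xi+C'\xi^2$ with $F(\xi)=e^{\xi}+e^{-\xi}$, whose derivatives of order $\geq 2$ are locally uniformly bounded, and applying Vitali--Porter once more to conclude that $x\mapsto\int_\R \big(F(\xi)-1-F'(0)\xi\big)\,K(x,d\xi)$ lies in $\Ocal(S)$, hence is continuous and bounded on compact subsets of $S$. This step is a substantive part of the proof and is not replaceable by the localization you describe.
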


Next, we shall make stronger assumptions on the parametrization of the jump kernel $K$ of an $S$-valued jump diffusion $X$. Indeed, we only consider kernels with holomorphic jump size, defined as follows.

\begin{definition}\label{def:kernelholojs}
    The jump kernel $K$ is said to have holomorphic jump size if it is of the form
   \begin{align*}
    K(\cdot,A)=\lambda(\cdot)\int_E 1_{A\setminus\{0\}} ( j(\cdot,y))F(dy),
\end{align*}
where,
\begin{enumerate}
\item $F$ is a non-negative measure on a measurable space $E$;
    \item $\lambda\in \mathcal{O}(S)$ and its restriction to $S$ is positive real valued;
    \item\label{it3} $j:S\times E \rightarrow \mathbb{R}^d$ such that $ j=j_\vare|_{S\times E}$ for some $\vare>0$ and some measurable function $$j_\vare:P^d_{R(S)+\vare}(0)\times E \rightarrow \mathbb{C}^d$$ with $j_\vare(\cdot,y)\in H(P^d_{R(S)+\vare}(0),\C^d)$ for each $y \in E$.
\end{enumerate}  
\end{definition}
Before developing the analysis of these kernels further, we introduce another notation that will be employed throughout the paper.
 \begin{notation}
    Consider a jump kernel $K$ with holomorphic jump size $j$ and let $j_\vare$ be the corresponding extension as in Definition~\ref{def:kernelholojs}. For each $y\in E$ we let $$\underline{\bm{j}}(y)\in (\S^* )^d,$$ denote the coefficients determining the power series representation of $j_\vare(\cdot,y)$. This  implies that
    \begin{align*}
        h_{\underline{\bm{j}}(y)}|_S=j(\cdot,y).
    \end{align*}
    Similarly, we choose $\la\in \S^* $ such that $h_{\la}|_S=\lambda$.

    Finally, for each $|\beta|\geq 2 $ and $|\alpha|\geq 0$ recall that
    \begin{align*}
\left(\int_E\underline{\bm{j}}( y)^{\ast \beta}\ F(d y)\right)_\alpha=\int_E(\underline{\bm{j}}( y)^{\ast \beta})_\alpha\ F(d y), 
    \end{align*} 
    whenever the involved quantities are well defined.
\end{notation}
     Notice that for all $\alpha\in \N^d_0$, it holds $\underline{\bm{j}}(y)^i_\alpha$, $\la_\alpha \in \mathbb{R}$. Moreover, the map $E\ni y\mapsto \underline{\bm{j}}(y)_\alpha$ is a measurable function as for all $y\in E$, $\underline{\bm{j}}(y)_\alpha=D^\alpha j_\vare(z,y)|_{z=0}$  with $j_\vare$ measurable (see Proposition~\ref{propclass}\ref{class:it2}).

Under the assumption of holomorphic jump sizes, the hypothesis of Theorem~\ref{prop: holo generalkernel} become more explicit.  The obtained result is stated in the next corollary. Recall the notation introduced in \eqref{eqn2}.

\begin{corollary}\label{coro:fubini}
Let $X=(X_t)_{t\in[0,T]}$ be an $S$-valued jump-diffusion with characteristics $(b,a,K)$ and extended generator $\Acal$. Assume that $b_j,a_{ij}\in\Ocal(S)$ and $K$ is a kernel with holomorphic jump size. Suppose that for some $\vare>0$ the following conditions hold true.
        \begin{enumerate}
            \item \label{coro1:i}for all $|\beta|\geq2$ and $z\in P^d_{R(S)+\vare}(0)$, $\int_E|\underline{\bm{j}}( y)^{\ast \beta}|_z \ F(d y)<\infty$;
            \item \label{coro1:ii}the map $ \int_{\|j(\cdot, y)\|>1}\exp(|j_{ \vare,1}(\cdot, y)|+\dots+|j_{ \vare,d}(\cdot, y)|) \ F(d y)$ is locally bounded on $P^d_{R(S)+\vare}(0)$.
        \end{enumerate}
        Let $\mathcal{V}$ and $\mathcal{V}^*$ be defined as in equation \eqref{eq:Vgeneralkernel}. 
        Then, $K\in \Ocal(S)$ and $X$ is an $S$-valued $\mathcal{V}$-holomorphic process. Moreover, for all $\u\in\mathcal{V}^*$,  $\mathcal{A}h_\u={h_{L(\u)}}|_S$, where $L:\mathcal{V}^*\rightarrow \SS$ is the operator introduced in equation \eqref{eq:Lgeneralkernel}, and for all $|\beta|\geq2$, 
        \begin{align}\label{eq:m_beta}
    \m^\beta=\la \ast \int_E\underline{\bm{j}}( y)^{\ast \beta}\ F(d y).
        \end{align}
\end{corollary}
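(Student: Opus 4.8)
The plan is to derive the hypotheses of Theorem~\ref{prop: holo generalkernel} from the holomorphic-jump-size structure, so that the conclusion follows by a direct application. First I would fix a common $\vare>0$ (shrinking the one from Definition~\ref{def:kernelholojs} and condition~\ref{coro1:i}--\ref{coro1:ii} if necessary, which is harmless since $\Ocal_\vare(S)\subseteq\Ocal_{\vare'}(S)$ for $\vare'\le\vare$) so that $b_j,a_{ij},j_\vare,\lambda$ all extend holomorphically to $P^d_{R(S)+\vare}(0)$. Setting $S_\vare:=P^d_{R(S)+\vare}(0)$, I would introduce the extended kernel
\begin{align*}
K_\vare(z,A):=h_\la(z)\int_E 1_{A\setminus\{0\}}(j_\vare(z,y))\,F(dy),\qquad z\in S_\vare,
\end{align*}
which is a ($\C$-valued) transition kernel from $S_\vare$ to $\C^d$: measurability in $z$ follows from measurability of $j_\vare$ and continuity of $h_\la$, and for fixed $z$ it is $h_\la(z)$ times the pushforward of $F$ under $j_\vare(z,\cdot)$. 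Its total variation is $|h_\la(z)|$ times that pushforward, so $\int g(\xi)\,|K_\vare(z,d\xi)|=|h_\la(z)|\int_E g(j_\vare(z,y))\,F(dy)$ for every nonnegative $g$, an identity I use throughout.

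The central computation is the identification of $\m^\beta$. By the product rule for sequences (Section~\ref{sec 214}) one has $j_\vare(z,y)^\beta=(h_{\underline{\bm{j}}(y)}(z))^\beta=h_{\underline{\bm{j}}(y)^{\ast\beta}}(z)$ for each $y\in E$ and $|\beta|\ge2$, with $\underline{\bm{j}}(y)^{\ast\beta}\in\mathcal{P}^d_{R(S)+\vare}(0)^*$. Condition~\ref{coro1:i} states exactly that $\int_E|\underline{\bm{j}}(y)^{\ast\beta}|_z\,F(dy)<\infty$ for all $z\in S_\vare$; by Tonelli this first yields $\int_E\underline{\bm{j}}(y)^{\ast\beta}\,F(dy)\in\mathcal{P}^d_{R(S)+\vare}(0)^*$ (since $|\int_E\underline{\bm{j}}(y)^{\ast\beta}F(dy)|_z\le\int_E|\underline{\bm{j}}(y)^{\ast\beta}|_z\,F(dy)$), and then justifies interchanging the summation over $\alpha$ with the integration over $E$ to obtain
\begin{align*}
\int_{\C^d}\xi^\beta K_\vare(z,d\xi)&=h_\la(z)\int_E j_\vare(z,y)^\beta\,F(dy)=h_\la(z)\,h_{\int_E\underline{\bm{j}}(y)^{\ast\beta}F(dy)}(z)\\
&=h_{\la\ast\int_E\underline{\bm{j}}(y)^{\ast\beta}F(dy)}(z).
\end{align*}
The right-hand side is the restriction to $S_\vare$ of an element of $H(P^d_{R(S)+\vare}(0))$, which verifies the defining condition~\eqref{eqn1} of $\Ocal_\vare(S)$, shows $K\in\Ocal(S)$, and, by the definition of $\m^\beta$ in Notation~\ref{not1}, establishes \eqref{eq:m_beta}.

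It then remains to check conditions~\ref{iitheorem1} and~\ref{iiitheorem1} of Theorem~\ref{prop: holo generalkernel}. For~\ref{iitheorem1} the total-variation identity gives, for each $z$,
\begin{align*}
\int_{\|\xi\|>1}\exp(|\xi_1|+\dots+|\xi_d|)\,|K_\vare(z,d\xi)|=|h_\la(z)|\int_{\|j_\vare(z,\cdot)\|>1}\exp(|j_{\vare,1}(z,\cdot)|+\dots+|j_{\vare,d}(z,\cdot)|)\,F(dy),
\end{align*}
which is finite since condition~\ref{coro1:ii} makes the $F$-integral locally bounded and $h_\la$ is continuous. For~\ref{iiitheorem1}, because $S_\vare=P^d_{R(S)+\vare}(0)$, I would invoke Remark~\ref{remark_continuityexp}\ref{itii}, which reduces the claim to local boundedness of $\int_{\|\xi\|>1}\exp(|\xi_1|+\dots+|\xi_d|)K_\vare(\cdot,d\xi)$ --- again immediate from condition~\ref{coro1:ii} and continuity of $h_\la$ --- together with continuity of the second-moment map, which under the present representation equals $z\mapsto|h_\la(z)|\sum_i\int_E|j_{\vare,i}(z,y)|^2\,F(dy)$. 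The latter I would get by dominated convergence: $z\mapsto|j_{\vare,i}(z,y)|^2=|h_{\underline{\bm{j}}(y)^{\ast 2\epsilon_i}}(z)|$ is continuous and, on any closed sub-polydisc of radius $\rho<R(S)+\vare$, is dominated by $|\underline{\bm{j}}(y)^{\ast 2\epsilon_i}|_{\rho}$ (monotonicity of $|\cdot|_z$ in $|z|$), an $F$-integrable majorant by condition~\ref{coro1:i} with $|\beta|=2$. With both conditions in hand, Theorem~\ref{prop: holo generalkernel} applies with $\mathcal{V},\mathcal{V}^*$ as in \eqref{eq:Vgeneralkernel}, giving that $X$ is $S$-valued $\mathcal{V}$-holomorphic and $\mathcal{A}h_\u=h_{L(\u)}|_S$ with $L$ as in \eqref{eq:Lgeneralkernel}.

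I expect the main obstacle to be the rigorous justification of the two exchanges of integration with infinite summation --- the Tonelli/Fubini step identifying $\m^\beta$ and the dominated-convergence step giving continuity of the second moment --- both of which rest on extracting integrable majorants from condition~\ref{coro1:i} via the monotonicity of the majorant norm $|\cdot|_z$ from \eqref{eqn2}. The bookkeeping needed to pass from the complex measure $K_\vare$ to its total variation, and to match the truncation region $\{\|j(\cdot,y)\|>1\}$ of condition~\ref{coro1:ii} with $\{\|\xi\|>1\}$, is routine but must be carried out with care.
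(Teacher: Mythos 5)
Your proposal is correct and follows essentially the same route as the paper's proof: extend the kernel to $K_\vare$ on $P^d_{R(S)+\vare}(0)$ via $h_\la$ and $j_\vare$, identify $\m^\beta=\la\ast\int_E\underline{\bm{j}}(y)^{\ast\beta}F(dy)$ by interchanging integration and summation under condition~\ref{coro1:i}, deduce $K\in\Ocal_\vare(S)$, and verify the hypotheses of Theorem~\ref{prop: holo generalkernel} via Remark~\ref{remark_continuityexp}\ref{itii} and condition~\ref{coro1:ii}. Your treatment is in fact slightly more explicit than the paper's (which compresses the Fubini step into a citation of dominated convergence and does not spell out the verification of condition~\ref{iitheorem1} or the continuity of the second-moment map), but the underlying argument is identical.
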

\begin{proof}
Fix $\vare>0$ such that $\lambda,b_j,a_{ij}\in \Ocal_\vare(S)$,  $j_\vare$ satisfies the conditions of Definition~\ref{def:kernelholojs}\ref{it3}, and the conditions~\ref{coro1:i} and~\ref{coro1:ii} of the corollary are satisfied. Let $K_\vare$ denote the transition kernel from $P^d_{R(S)+\vare}(0)$ to $\mathbb{C}^d$ given by
$$K_\vare(\cdot,A)=h_{\la}(\cdot)\int_E 1_{A\setminus\{0\}} (j_\vare(\cdot,y))F(dy).$$
    Notice that by the dominated convergence theorem and condition~\ref{coro1:i}, it holds $\int_E\underline{\bm{j}}( y)^{\ast \beta}\ F(d y)\in \mathcal{P}^d_{R(S){ +\vare}}(0)^* $ and for all $z\in P^d_{R(S)+\vare}(0)$ 
    \begin{align*}
        \int_{\mathbb{C}^d}  \xi^\beta  \ K(z,d \xi)=\lambda(z)\int_Ej_{ \vare}^\beta(z, y) \ F(d y)=\sum_{\alpha\in\N^d_0} \left(\la \ast \int_E\underline{\bm{j}}( y)^{\ast \beta}\ F(d y) \right)_\alpha \frac{z^\alpha}{\alpha!}.
    \end{align*}
   As a consequence, $\left(\la \ast \int_E\underline{\bm{j}}( y)^{\ast \beta}\ F(d y)\right)\in \mathcal{P}^d_{R(S) +\vare}(0)^* $, implying that $K\in \mathcal{O}_\vare(S)$, with $S_\vare=P^d_{R(S)+\vare}(0)$. By Remark~\ref{remark_continuityexp}\ref{itii} condition~\ref{iiitheorem1} of Theorem~\ref{prop: holo generalkernel} follows by condition~\ref{coro1:ii} in the statement. Thus, all the hypotheses of Theorem~\ref{prop: holo generalkernel} are verified and the claim follows.
\end{proof}
Even though Corollary~\ref{coro:fubini} gives explicit conditions for Theorem~\ref{prop: holo generalkernel}, direct verification of such conditions may prove to be rather complicated. Relying on powerful results from complex analysis it is however possible to obtain simpler sufficient conditions. Particularly useful for our purpose is Morera's theorem (see Proposition~\ref{propclass}\ref{class:it5} and Lemma~\ref{prop:Morerageneral} for the adaptation to our needs) providing sufficient conditions for the integral of an holomorphic map to be holomorphic. This property is very useful to show that  the extended generator of an $S$-valued jump-diffusion maps a subset of $\mathcal{O}(S)$ into $\mathcal{O}(S)$. The corresponding result is provided in Theorem~\ref{ps to ps}.

Recall that $\circ^s$ and integration of sequence-valued maps have been introduced in Section~\ref{sec 214} and Section~\ref{sec212}, respectively. In the following, we let $L^1(E,F)$ denote the space of $L^1$ maps on the measurable space $E$ with non-negative measure $F$.
The proof of the following theorem can be found in Section~\ref{sec312}.

\begin{theorem}\label{ps to ps} 
Let $X=(X_t)_{t\in [0,T]}$ be an $S$-valued jump-diffusion with characteristics $(b,a,K)$ and extended generator $\Acal$.  Assume that $b_j,a_{ij}\in\Ocal(S)$, $K$ is a kernel with holomorphic jump size, and let $F$ be the corresponding non-negative measure on the measurable space $E$. 
Fix $G=(G_1,\ldots,G_d)$ for $G_j$ such that for some $\vare>0$
\begin{equation}\label{eqnG}
    G_j>\sup_{z\in P^d_{R(S)+\vare}(0)}\sup_{ y\in \textup{supp}(F)}|z_j|+|\underline{\bm{j}}( y)^j|_z.
\end{equation}
For each $h\in H(P_G^d(0))$, $z\in P^d_{R(S)+\vare}(0)$, and $y\in E$ set 
$$
Jh(z,y):=\lambda(z)\big(h(z+j_\vare(z,y))-h(z)-\nabla h(z)^\top j_\vare(z,y)\big)
$$
and
\begin{equation}\label{eqDJ}
    \Dcal(J):=\{h\in H(P_G^d(0))\colon Jh(z,\cdot)\in L^1(E,F)\text{ for each }z\in P^d_{R(S)+\vare}(0)\}.
\end{equation}
Set 
\begin{align*}
     \mathcal{V}&\subseteq\{h\in \Dcal(J)\colon
     \text{the map $P^d_{R(S)+\vare}\ni z\to Jh(z,\cdot)\in L^1(E,F)$ is continuous}\},
\\  \mathcal{V}^*&:=\{\u\in \SS \colon h_\u=h|_{{P}^d_{R(S)}(0)},  \ h\in \mathcal{V}\}.
 \end{align*}
Then $X$ is an $S$-valued $\Vcal$-holomorphic process and for all $\u\in\mathcal{V}^*$, $$\mathcal{A}h_\u=h_{L(\u)}|_S,$$ where $L:\mathcal{V}^*\rightarrow \SS$  is given by
\begin{align}\label{eq:OpLu}
    L(\u) =\sum_{|\beta|=1}\u^{(\beta)}\ast \underline{\b}^{\beta} &+\sum_{|\beta|=2}\frac{1}{\beta!}\u^{(\beta)}\ast \underline{\underline{\a}}^{\beta}\\
    &+\la\ast \int_{E}\u\circ^s \underline{\bm{j}} ( y)-\u-\sum_{|\beta|=1}\u^{(\beta)}\ast \underline{\bm{j}}( y)^{\ast\beta} \ F(d y).\nonumber 
\end{align}
\end{theorem}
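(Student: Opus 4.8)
The plan is to verify, for every $\u\in\mathcal{V}^*$ and a corresponding $h\in\mathcal{V}$ with $h_\u=h|_{P^d_{R(S)}(0)}$, the three requirements of Definition~\ref{def:holomorphic process}: that $h|_S\in\Dcal(\Acal)$, that $\Acal h|_S\in\Ocal(S)$ with power series exactly \eqref{eq:OpLu}, and that the process $N^h$ of \eqref{eq:localmart} is a local martingale. I would treat the three summands of $\Acal$ separately. The drift and diffusion parts are pure bookkeeping with the operations of Section~\ref{sec 214}: on $S$ one has $\nabla h^\top b=\sum_{|\beta|=1}h_{\u^{(\beta)}}h_{\underline{\b}^\beta}=\sum_{|\beta|=1}h_{\u^{(\beta)}\ast\underline{\b}^\beta}$ by the differentiation and product rules, and likewise $\tfrac12\mathrm{Tr}(a\nabla^2h)=\sum_{|\beta|=2}\tfrac1{\beta!}h_{\u^{(\beta)}\ast\underline{\underline{\a}}^\beta}$, where the multi-index factor $\tfrac1{\beta!}$ accounts for the symmetry of $a$ and for the off-diagonal double counting. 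Both are holomorphic on $P^d_{R(S)+\vare}(0)$ as finite sums of products of holomorphic functions and reproduce the first two terms of $L(\u)$.

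The heart of the argument is the jump part $g(z):=\int_E Jh(z,y)\,F(dy)$. First I would fix $y$ and note that the bound \eqref{eqnG} is precisely the (stronger) hypothesis of the composition rule of Section~\ref{sec 214} applied with $\underline{\v}=\underline{\bm{j}}(y)$ and $N=G$; hence $h(z+j_\vare(z,y))=h_{\u\circ^s\underline{\bm{j}}(y)}(z)$ and, subtracting the zeroth and first order terms,
\begin{equation*}
Jh(z,y)=h_{\la}(z)\,h_{\mathbf{g}(y)}(z),\qquad \mathbf{g}(y):=\u\circ^s\underline{\bm{j}}(y)-\u-\sum_{|\beta|=1}\u^{(\beta)}\ast\underline{\bm{j}}(y)^{\ast\beta},
\end{equation*}
so that $z\mapsto Jh(z,y)$ is holomorphic on $P^d_{R(S)+\vare}(0)$ for each $y$. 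The membership $h\in\Dcal(J)$ of \eqref{eqDJ} then yields $\int_E|Jh(x,y)|\,F(dy)<\infty$ for $x\in S$, which is exactly the integrability required for $h|_S\in\Dcal(\Acal)$ under a kernel with holomorphic jump size (Definition~\ref{def:kernelholojs}).

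It remains to show $g$ is holomorphic and to read off its coefficients, and this is the step I expect to be the main obstacle. The defining continuity of $\mathcal{V}$, namely that $z\mapsto Jh(z,\cdot)\in L^1(E,F)$ is continuous, makes $g$ continuous via $|g(z)-g(z')|\le\|Jh(z,\cdot)-Jh(z',\cdot)\|_{L^1(E,F)}$; combined with the holomorphy of each $Jh(\cdot,y)$, this is precisely the input for the adapted Morera theorem (Lemma~\ref{prop:Morerageneral}). Integrating $Jh(\cdot,y)$ over closed contours in each variable and interchanging $\oint$ and $\int_E$ by Fubini (legitimate because $z\mapsto\|Jh(z,\cdot)\|_{L^1(E,F)}$ is continuous, hence bounded on the compact contour) gives vanishing contour integrals, whence $g\in H(P^d_{R(S)+\vare}(0))$. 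To extract the power series I would apply the Cauchy coefficient functional (Proposition~\ref{propclass}) and interchange it once more with $\int_E$, obtaining $D^\alpha g(0)=\int_E D^\alpha\big(h_{\la\ast\mathbf{g}(y)}\big)(0)\,F(dy)$; hence $g=h_{\la\ast\int_E\mathbf{g}(y)F(dy)}$, which is the third term of \eqref{eq:OpLu} and shows $L(\u)\in\SS$ since $g$ is holomorphic on a polydisc strictly larger than $P^d_{R(S)}(0)$. The delicate points are the two Fubini interchanges and the several-variable Morera argument, all of which hinge on the $L^1$-continuity built into $\mathcal{V}$.

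Finally, since $h|_S$ is the restriction of the smooth function $h_\u$ and lies in $\Dcal(\Acal)$, the local martingale property of $N^h$ follows from the semimartingale characteristics of $X$ together with It\^o's formula (Theorem~II.2.42 in \cite{JS:87}), after a standard localization justified by the local boundedness of $\Acal h\in\Ocal(S)$. Assembling the drift, diffusion, and jump contributions yields $\Acal h=h_{L(\u)}$ on $P^d_{R(S)+\vare}(0)$, hence on $S$, which is \eqref{eq: Ah_u=h_Lu} with $L$ as in \eqref{eq:OpLu}, completing the verification that $X$ is an $S$-valued $\mathcal{V}$-holomorphic process.
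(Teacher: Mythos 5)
Your proposal is correct and follows essentially the same route as the paper's proof: the composition rule enabled by \eqref{eqnG} to get holomorphy of $z\mapsto Jh(z,y)$ with coefficients $\u\circ^s\underline{\bm{j}}(y)$, the adapted Morera result (Lemma~\ref{prop:Morerageneral}) applied to $Jh$ via the $L^1(E,F)$-continuity built into $\mathcal{V}$ (whose internals --- Fubini on contours, Goursat, Morera, Cauchy coefficient extraction --- you unpack rather than cite), and the identification of the coefficients as \eqref{eq:OpLu}. The only imprecision is in your last step: the localization for the local martingale property of $N^h$ should be justified by the local boundedness of $z\mapsto\int_E|Jh(z,y)|\,F(dy)$ --- which follows from the $L^1$-continuity you already invoked, and is exactly how the paper concludes by referring back to the argument of Theorem~\ref{prop: holo generalkernel} --- rather than by the local boundedness of $\Acal h$ alone, since the compensator integrability condition involves the integral of the absolute value.
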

\begin{remark}
\begin{enumerate}
    \item   Similar to Theorem \ref{prop: holo generalkernel}, the assumption in equation \eqref{eqnG} is essential to derive the  representation of the operator $L$ as in Equation \eqref{eq:OpLu} (see Remark \ref{rem: G Morera}).  
    \item  Observe that the operator $L:\mathcal{V}^*\rightarrow \S^*$  defined in Equation \eqref{eq:OpLu} can be expressed in   the following more explicit form when $d=1$
\begin{align}\label{eq:L_1dim}
     L(\u)=\u^{(1)}\ast \b+\frac{1}{2}\u^{(2)}\ast\a +\la\ast \int_{E}(\u\circ^s\j( y))-\u-\u^{(1)}\ast \j( y) \ F(d y),
 \end{align} 
 for all $\u\in \Vcal^*$.
\end{enumerate}
   
\end{remark}

 The next step consists in investigating the sets $\mathcal{V}$ satisfying the condition of Theorem~\ref{ps to ps}.  We start the analysis by considering the case of general jump diffusions with possible unbounded jump sizes. 
To specify the integrability properties of the kernels of such processes, we exploit the notion of a weight function.

\begin{definition}\label{def2}
Given a polydisc $P_G^d(0)$, for some $G>0$, and a nondecreasing map
$$v:\R_+\rightarrow(0,\infty),$$
which we call \emph{weight function}, we denote by
$$H_v(P_G^d(0)):=\{h\in H(P_G^d(0)) \colon \|h\|_v<\infty\},$$ 
the corresponding weighted  spaces of holomorphic functions on $P_G^d(0)$, where 
$$\|h\|_v:= \sup_{z\in P_G^d(0)}|h(z)|v(\|z\|)^{-1}.$$
\end{definition}
The proof of the following corollary can be found in Section~\ref{proofprof:unboundedjs}.  Moreover, we refer to Remark \ref{rem: G Morera corollary}\ref{rem: G Morera corollary i} for further comments on the choice of the polyradius $G$ in this specific case.

\begin{corollary}\label{prop:unboundedjs}
Let $X=(X_t)_{t\in [0,T]}$ be an $S$-valued jump-diffusion with characteristics $(b,a,K)$ and extended generator $\Acal$.  Assume that $b_j,a_{ij}\in\Ocal(S)$, $K$ is a kernel with holomorphic jump size, and let $F$ be the corresponding non-negative measure on the measurable space $E$.  Fix  a weight function $v$, $\vare>0$, and suppose that for each $z\in P^d_{R(S)+\vare}(0)$, there exists $\delta_z>0$ such that
\begin{equation}\label{eq:theorem_unboundejs}
\begin{aligned}
    &\int_E  \ \sup_{w\in P_{{\delta_z}}^d(z)}\|j_\vare(w, y)\|\land\|j_\vare(w, y)\|^2\ F(d y)<\infty;\\
          &\int_E  \sup_{w\in P_{{\delta_z}}^d(z)} 1_{\{\|j_\vare(w, y)\|> 1\}} v(\|w+j_\vare(w, y)\|)\ F(d y)<\infty. 
\end{aligned}
\end{equation}
Let $G\in (0,\infty]^d$ satisfy \eqref{eqnG} and set
\begin{align*}
    \Vcal:=H_v(P_G^d(0)) \qquad \text{and}\qquad  \mathcal{V}^*:=\{\u\in \SS \colon h_\u=h|_{{P}^d_{R(S)}(0)},  \ h\in \mathcal{V}\}
\end{align*}
Then $X$ is an $S$-valued $\mathcal{V}$-holomorphic process and for all $\u\in\mathcal{V}^*$, $\mathcal{A}h_\u=h_{L(\u)}|_S$, where $L:\mathcal{V}^*\rightarrow \S^*$ is the operator defined in equation \eqref{eq:OpLu}. 
\end{corollary}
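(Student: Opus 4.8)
The plan is to deduce Corollary~\ref{prop:unboundedjs} from Theorem~\ref{ps to ps} by verifying that the weighted space $\Vcal=H_v(P_G^d(0))$ is contained in the set of admissible functions appearing there, namely
\[
\{h\in\Dcal(J)\colon z\mapsto Jh(z,\cdot)\in L^1(E,F)\text{ is continuous on }P^d_{R(S)+\vare}(0)\}.
\]
Thus the work splits into two tasks: first, showing that every $h\in H_v(P_G^d(0))$ lies in $\Dcal(J)$, i.e.\ that $Jh(z,\cdot)\in L^1(E,F)$ for each fixed $z\in P^d_{R(S)+\vare}(0)$; and second, that the $L^1(E,F)$-valued map $z\mapsto Jh(z,\cdot)$ is continuous. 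Once both are established, the conclusion (including the formula \eqref{eq:OpLu} for $L$) is immediate from Theorem~\ref{ps to ps}, since the choice of $G$ satisfying \eqref{eqnG} is assumed.

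First I would bound $|Jh(z,y)|$ using the two regimes $\|j_\vare(z,y)\|\le 1$ and $\|j_\vare(z,y)\|>1$ separately, which is exactly why the two integrability conditions in \eqref{eq:theorem_unboundejs} are stated as a small-jump part ($\land\|j_\vare\|^2$) and a large-jump part (with the weight $v(\|w+j_\vare\|)$). On the small-jump region I would Taylor-expand $h$ to second order: since $h\in H(P_G^d(0))$ is holomorphic, on the relatively compact polydisc $\overline{P^d_{\delta_z}(z)}$ (with $\delta_z$ small enough that this closure sits inside $P_G^d(0)$, using \eqref{eqnG} to ensure $z+j_\vare(z,y)\in P_G^d(0)$) its second derivatives are uniformly bounded by some constant, giving
\[
|h(z+j_\vare(z,y))-h(z)-\nabla h(z)^\top j_\vare(z,y)|\le C_z\,\|j_\vare(z,y)\|^2
\]
whenever $\|j_\vare(z,y)\|\le 1$, which is $F$-integrable by the first line of \eqref{eq:theorem_unboundejs}. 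On the large-jump region I would instead bound each of the three terms directly: $|h(z+j_\vare(z,y))|\le \|h\|_v\,v(\|z+j_\vare(z,y)\|)$ by the definition of $\|\cdot\|_v$, while $|h(z)|$ and $|\nabla h(z)^\top j_\vare(z,y)|$ are controlled by the first integrability condition together with local boundedness of $h$ and $\nabla h$; integrability of the dominating term then follows from the second line of \eqref{eq:theorem_unboundejs}. Multiplying by $\lambda(z)$ (a finite holomorphic factor) preserves integrability, so $Jh(z,\cdot)\in L^1(E,F)$.

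For the continuity claim I would fix $z_0$ and work on the fixed neighborhood $P^d_{\delta_{z_0}}(z_0)$, producing a single $F$-integrable dominating function valid for all $w$ in that neighborhood — this is precisely the role of the suprema $\sup_{w\in P^d_{\delta_z}(z)}$ inside \eqref{eq:theorem_unboundejs}, which make the bounds locally uniform rather than pointwise. Then for each fixed $y$ the map $w\mapsto Jh(w,y)$ is continuous (indeed holomorphic in $w$, being built from compositions and derivatives of holomorphic functions, cf.\ the composition rule in Section~\ref{sec 214}), so dominated convergence applied to $\|Jh(w,\cdot)-Jh(z_0,\cdot)\|_{L^1(E,F)}$ as $w\to z_0$ yields the desired continuity. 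The main obstacle I anticipate is the large-jump estimate: one must check that the weight function $v$, being only nondecreasing and not necessarily submultiplicative, genuinely dominates $|h(z+j_\vare(z,y))|$ after the shift by $j_\vare$, and that the second condition in \eqref{eq:theorem_unboundejs} is tailored exactly to absorb the indicator $1_{\{\|j_\vare\|>1\}}v(\|w+j_\vare\|)$ uniformly over the neighborhood; handling the interplay between the shift $z\mapsto z+j_\vare(z,y)$ and the weighted norm is the delicate point, whereas the small-jump Taylor estimate and the final dominated-convergence argument are routine.
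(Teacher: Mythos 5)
Your proposal is correct and follows essentially the same route as the paper's own proof: both verify the hypotheses of Theorem~\ref{ps to ps} by splitting $Jh$ into small-jump and large-jump regimes, using the Taylor remainder estimate (Cauchy bounds on a polydisc whose closure lies in $P_G^d(0)$, guaranteed by \eqref{eqnG}) to get the $\|j_\vare\|^2$ control, the defining bound $|h(\cdot)|\leq \|h\|_v\,v(\|\cdot\|)$ for the large jumps, and then the suprema over $P^d_{\delta_z}(z)$ in \eqref{eq:theorem_unboundejs} as a locally uniform dominating function so that dominated convergence yields both $h\in\Dcal(J)$ and continuity of $z\mapsto Jh(z,\cdot)$ in $L^1(E,F)$.
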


\begin{remark}\label{remark:nofinitemoments}
Hereafter, we list some interesting examples of weight functions. Observe furthermore that condition~\eqref{eq:theorem_unboundejs} concerns integrability conditions of the extension of the jump kernel.
\begin{enumerate}
    \item \textbf{Weight functions with sublinear growth}:  we say that a weight functions $v$ has sublinear growth 
$$v(t)\leq Ct$$
for some $C>0$ and each $t>1$. In such case, the second condition in \eqref{eq:theorem_unboundejs} is implied by the first one.
\item \textbf{Rapidly increasing weight functions}: we say that a weight functions $v$  is rapidly increasing if satisfies $\lim_{t\to\infty}t^kv(t)^{-1}=0$ for each $k\geq 0$. In this case, one has that
$$\{p|_S\text{  for some polynomial }p:\R^d\to \R\}
\subseteq\{h\in H(P^d_G(0)) \colon
     \sup_{z\in P^d_{G}(0)}|h(z)|v(\|z\|)^{-1}<\infty \},$$
     implying that the conditions of Lemma~\ref{lemma:finitepoments} are satisfied. If $v$ is sub-multiplicative (i.e. $v(t_1+t_2)\leq C' v(t_1)v(t_2)$),  the dominated convergence theorem with the bound
     $$\sup_{w\in P_{{\delta_z}}^d(z)}  \|j_\vare(w, y)\|^{|\beta|}
     \leq C \sup_{w\in P_{{\delta_z}}^d(z)}
     \Big(\|j_\vare(w, y)\|^21_{\{\|j_\vare(w, y)\|\leq 1\}}+
      1_{\{\|j_\vare(w, y)\|> 1\}} v(\|w+j_\vare(w, y)\|)\Big),$$ and an application of Lemma~\ref{prop:Morerageneral} yields that 
the coefficients $\m^\beta$ determining the power series representation of $\int_{\Rset^d} \xi^\beta  \ K_\vare(\cdot ,d\xi)$ are given by Equation \eqref{eq:m_beta}, for all  $|\beta|\geq 2$.

     \item \label{entire functions finite order }\textbf{Entire functions of finite order and type}: if $S$ is given as an unbounded subset of $\R$, an interesting choice for the weight function $v$ is 
    $$v(t):=\exp(\tau t^\eta)$$
    for some $\tau,\eta\in (0,\infty)$. This allows to work with the concept of order and type of an entire function. 
    \begin{itemize}
        \item  An entire function $h\in H(\C)$ is said to be of (finite) \textit{order} $\eta\in \R$ if 
    \begin{equation}\label{eqn8}
        \rho=\inf\{c >0 \colon |h(z)|<\exp (|z|^c) \ \text{for sufficiently large } |z|\}.
    \end{equation}
    \item  An entire function $h\in H(\C)$ of finite order $\eta\in \R$ is then said to be of (finite) \textit{type} $\tau\in \R$ if
\begin{equation}\label{eqn10}
    \tau=\inf\{a >0 \colon |h(z)|<\exp (a|z|^\eta) \ \text{for sufficiently large } |z|\}.
\end{equation}
\item An entire function is said of  \textit{exponential type $\tau$} if its order $\eta<1$, or $\eta=1$ and its type $\tau$ is finite.
    \end{itemize}
    More generally, we say that an entire function is of  \textit{exponential type} if its order $\eta<1$, or $\eta=1$ and its type is finite.

   In particular, setting
    $\Vcal= H_v(\C),$ 
    we obtain that $\Vcal$ contains every entire function of order strictly smaller than $\eta$ and every entire function of order $\eta$ and type strictly smaller than $\tau$. The advantage of working with the class of entire functions of finite order is that conditions for characterizing the sequence determining their power series representation have been studied extensively (see e.g.~Theorem 2 and Theorem 3 in \cite{LE:98} and Proposition~\ref{propE3}).

    \item \label{bounded jump size} \textbf{Locally uniformly bounded jump size}: assume that for
every $z\in P^d_{R(S)+\vare}(0)$ there exists $\delta_z>0$ such that  
\begin{equation}\label{eqn_bdd}
         \sup_{w\in P_{{\delta_z}}^d(z), \  y\in \textup{supp}(F)}\|j(w, y)\|<\infty.
\end{equation}
     In this case the second condition in \eqref{eq:theorem_unboundejs} is implied by the first one for each weight function $v$ and the result of Corollary~\ref{prop:unboundedjs} holds for $\Vcal=H(P_G^d(0))$. This in particular implies that $X$ is an $S$-valued $H(P_G^d(0))$-holomorphic process. This is of particular interest when $S$ is bounded and condition \eqref{eqn_bdd} is automatically satisfied.
     Also in this case by the dominated convergence theorem and Lemma~\ref{prop:Morerageneral} we can conclude that $\m^\beta$ satisfies \eqref{eq:m_beta}, for all  $|\beta|\geq 2$.

\end{enumerate}
It is also interesting to notice that including extra conditions on the kernel $K$ as in Corollary~\ref{prop:unboundedjs} permits to obtain the holomorphic property for a set of functions $\Vcal$ considerably larger than the one defined in Theorem~\ref{prop: holo generalkernel} (see in particular Equation \eqref{eq:Vgeneralkernel}). Consider for instance the case of a locally uniformly bounded jump size~\ref{bounded jump size}, and notice that here the set $\Vcal$ is strictly larger than the corresponding set defined in Equation \eqref{eq:Vgeneralkernel}. The same holds true in the above case~\ref{entire functions finite order }. Indeed, assume that $\eta,{\tau}>1$ and consider the entire function 
\begin{align*}
    f(z):=\sum_{n=0}^\infty \frac{e^n n!}{n^n}\frac{z^n}{n!}, \quad z\in \C.
\end{align*}
By Theorem 2 and Theorem 3 in \cite{LE:98} $f$ is of order and type $1$, implying that $f\in \mathcal{V}=H_v(\mathbb{C})$. However, its derivatives in 0 are not uniformly bounded.
   \end{remark}

\subsection{Examples of holomorphic jump-diffusions}
This section is dedicated to provide an overview of different instances of holomorphic processes. For simplicity we consider the case $d=1$.
We already saw that classical polynomial processes are $\Vcal$-holomorphic if $\Vcal$ consists of polynomials. We show now that this is the case also for larger sets $\Vcal$. 

\begin{example}
Let $B=(B_t)_{t\in[0,T]}$ be a one-dimensional Brownian motion and $N(dy,dt)$ a  Poisson random measure with compensator $F(dy)\times dt$ on $E\times  [0,T]$, for some measurable space $E$. Consider the following stochastic differential equation:
\begin{align}\label{eq: SDE}
dX_t=b(X_t)dt+\sigma(X_t)dB_t+\int_E\delta(X_{t^-},y) (N(dy,dt)-F(dy)dt), \qquad X_0=x_0\in \R.
\end{align}
Suppose that 
\begin{align*}
    b(x):=\b_0+\b_1x, \qquad \sigma(x):=\sg_0+x\sg_1, \quad \text{and}\quad j(x,y):=\j_0(y)+x\j_1(y),
\end{align*}
for some functions $\j_i:E\rightarrow\Rset$, such that $\int_E|\j_i(y)|^kF(dy)<\infty$ for all $k\geq2$, $i=0,1$. Then, there exists a unique strong $\R$-valued solution $X=(X_t)_{t\in[0,T]}$ of the equation \eqref{eq: SDE}, which is in fact a polynomial jump-diffusion (see Example 2.6 in \cite{FL:20}). 
Furthermore, notice that setting $$K(x,A):=\int_E1_{A\setminus \{0\}}(j(x,y))F(dy),$$ it holds that $K$ is a transition kernel with holomorphic jump size in the sense of Definition~\ref{def:kernelholojs}. Moreover, in some cases, $X$ is also a holomorphic process, for other $\mathcal{V}$ different from polynomials.
\begin{enumerate}
    \item If $\sup_{y\in E}|\j_i(y)|<\infty$, for $i=0,1$,  the conditions of Remark~\ref{remark:nofinitemoments}\ref{bounded jump size} are satisfied. Let $\Vcal$ and $\Vcal^*$ be the sets defined in the same remark. Then $X$ is a $\Vcal$-holomorphic process and for all $\u\in \Vcal^*$, $\mathcal{A}h_\u=h_{L(\u)}|_\Rset$, where $L$ is given in equation \eqref{eq:L_1dim} with 
    \begin{align*}
        \b:=(\b_0,\b_1,0,\dots), \quad  \a:=(\sg_0^2,2\sg_0\sg_1,2\sg_1^2,0,\dots)\quad \text{and} \quad \la:=(1,0,0,\dots).
    \end{align*}
    \item Alternatively, if for every $z\in \C$ it holds 
    \begin{align*}
        \int_E  \sup_{w\in P_{{\delta_z}}^1(z)} 1_{\{|\j_0(y)+x\j_1(y)|> 1\}} v(|\j_0(y)+w(1+\j_1(y))|) F(dy)<\infty,
    \end{align*}
    for some $\delta_z>0$ and some weight function $v$, then the hypothesis of Corollary~\ref{prop:unboundedjs} are satisfied. Letting $\Vcal$ and $\Vcal^*$ be the sets defined in the same corollary we get that the process $X$ is $\Vcal$-holomorphic. 
\end{enumerate}
Relying on existence results given in Theorem III.2.32 in \cite{JS:87}, one could also go beyond linear coefficients and consider solutions of stochastic differential equations with entire coefficients.

\begin{example}
    Let $B=(B_t)_{t\in[0,T]}$ be a one-dimensional Brownian motion. Consider the following stochastic differential equation:
\begin{align}\label{eq: SDE2}
dX_t=b(X_t)dt+\sigma(X_t)dB_t, \qquad X_0=x_0\in \R.
\end{align}
Assume that $b$ and $\sigma$ are bounded entire functions with bounded first derivatives. Then, equation \eqref{eq: SDE2} admits a unique solution $X=(X)_{t\in [0,T]}$ which is furthermore a holomorphic $\mathcal{V}$-holomorphic process, for $\mathcal{V}$ being the class of entire functions.

 Notice that more generally solutions of \textit{neural SDEs} as specified e.g., in \cite{GSSSZ:22}, \cite{CKT:20} are holomorphic processes if the considered activation functions are holomorphic ones.  
\end{example}

\end{example}

Alternatively, examples of holomorphic processes can be provided by studying martingale problems. Indeed, recall that
we defined holomorphic processes as solution of a martingale problem, i.e.~as $X=(X_t)_{t\in [0,T]}$ such that the process $N^f$ introduced in \eqref{eq:localmart} is a local martingale for a set of test functions $f$. Given some coefficients $a,b,\lambda$ and $j$, sufficient (and necessary) conditions for the existence of a solution to the corresponding martingale problem can be obtained by verifying the hypothesis of Theorem 4.5.4 in \cite{EK} (modulo explosion). If the state space is $\R$, this reduces to check that the diffusion coefficient $a$ is nonnegative. To guarantee that $X$ does not explode, one can then resource to Theorem 4.3.8 in \cite{EK}, which translates in checking that $\Acal1=0$, when the state space is compact.

We here present some coefficients $a,b,\lambda$ and $j$,  for which the solution of the corresponding martingale problem is a holomorphic problem.

\begin{example}
Fix $S\subseteq \Rset$, $G>0$ as in \eqref{eqG1dim}, set $b,a, \lambda\in \Ocal(S)$, and let $F$ be a non-negative measure on $(\Rset,\mathcal{B}(\Rset))$. We now analyze the form of the jump size $j$ in the compensator of the jumps $K(x,A):=\int_\R1_{A\setminus \{0\}}(j(x,y))F(dy)$ such that a jump-diffusion $X=(X_t)_{t\in[0,T]}$, given as solution to the martingale problem for the triplet $(b,a,K)$, is a holomorphic process.
Consider the following three specifications:
\begin{enumerate}
    \item \label{item j i}Let $j:P^1_G(0)\times \Rset\rightarrow\Rset $ be such that for all $(z,y)\in P^1_G(0)\times \R$, $j(z,y):=\sum_{\alpha\in \N_0}\bm{j}_\alpha(y) \frac{z^\alpha}{\alpha!}$, for $\j_\alpha:\Rset\rightarrow\Rset $ measurable and bounded functions, for all $\alpha\in\N_0$.  Set
    \begin{align*}
        s(y):=\sup_{\alpha\in \N_0} |\j_\alpha(y)|, \quad y\in\Rset.
    \end{align*}
  Assume that $\sup_{y\in \Rset} s(y)<\infty$ and $\int_\Rset s(y)^2 \ F(dy)<\infty$.
Fix $z\in P^1_G(0)$, $\delta_z>0$ and note that 
    \begin{align*}
        &\sup_{y\in \Rset}\sup_{w\in P^1_{\delta_z}(z)}|j(w,y)|\leq \sup_{\alpha\in \N_0}\sup_{y\in\Rset}|\j_\alpha(y)|\exp(r+|z|)<\infty,\\
        & \int_\Rset \sup_{w\in P^1_{\delta_z}(z)}|j(w,y)|^2 \ F(dy)\leq  \exp(2(r+|z|))\int_\Rset s(y)^2 \ F(dy)<\infty.
    \end{align*}
    In particular, a polynomial jump size might be considered: $j(z,y):=\sum_{\alpha=0}^N \bm{j}_\alpha(y) \frac{z^\alpha}{\alpha!}$, where for all $\alpha\leq N$, $\j_\alpha:\Rset\rightarrow\Rset $ with $\sup_{y\in\Rset}|\j(y)_\alpha|<\infty $ and $ \int_\Rset |\j_\alpha(y)|^2 \ F(dy)<\infty.$ 
    \item \label{item j ii} Let $f\in \Ocal(S)$ and assume that $F$ is a measure of bounded support $E:=\textup{supp}(F)$ such that $\int_E |y|^2 F(dy)<\infty$. Set for all $(z,y)\in P^1_G(0)\times E$, $$j(z,y):=f(z+y)=\sum_{\alpha\in\N_0} f^{(\alpha)}(y)\frac{z^\alpha}{\alpha!}.$$ Then for all $z\in P^1_G(0)$ and some $\delta_z>0$,
\begin{align*}
    &\sup_{y\in E }\sup_{w\in P^1_{\delta_z}(z)}|j(w,y)|=\sup_{y\in E }\sup_{w\in P^1_{\delta_z}(z)}|f(w+y)|<\infty,\\
    & \int_E \sup_{w\in P^1_{\delta_z}(z)}|j(w,y)|^2 \ F(dy)\leq  \int_E\sup_{w\in  P^1_{\delta_z}(z)}|f(w+y)|^2F(dy)\leq  \int_E C|y|^2 F(dy)<\infty,
\end{align*}
for some $C>0$.
\item \label{item j iii}Let $j:P^1_G(0)\times \Rset\rightarrow\Rset $ be such that for all $(z,y)\in P^1_G(0)\times \R$, $j(z,y):=\sum_{\alpha\in \N_0} \bm{j}_\alpha(y) \frac{z^\alpha}{\alpha!}$, where for all $\alpha$, $\j_\alpha:\Rset\rightarrow\Rset $ with $|\j(y)_\alpha|<\ell(y)$ 
for some $\ell:\R\to\R_+$ satisfying $\ell(y)\geq |y|$ and
$\int_\Rset\exp(C\ell(y))   F(dy)<\infty$ for each $C>0$. Notice that for all $z\in \C$ and some $\delta_z>0$,
        \begin{align*}
           & \int_\Rset \sup_{w \in P^1_{\delta_z}(z)}1_{\{|j(w,y)|\leq1\}} |j(w,y)|^2\ F(dy)\leq C\int_\Rset \ell(y)^2F(dy)<\infty ,\\
           &\int_\Rset \sup_{w \in P^1_{\delta_z}(z)}1_{\{|j(w,y)|>1\}}\exp(|j(w,y)|) \ F(dy)\leq \int_\Rset \exp(C\ell(y))F(dy)<\infty.
           \end{align*}
Note that the integrability condition on $\ell$ is guaranteed if $\int_\Rset\exp(\ell(y)^2)   F(dy)<\infty$.

\end{enumerate}
If the jump size is specified as in~\ref{item j i} or~\ref{item j ii}, then the conditions of Remark~\ref{remark:nofinitemoments}\ref{bounded jump size} are satisfied and $X$ is a $\Vcal$-holomorphic jump-diffusion, for $\Vcal$ being the set defined in the same remark.  If instead $j$ is given as in~\ref{item j iii}, then the conditions of Corollary~\ref{prop:unboundedjs} hold true and $X$ is $\Vcal$-holomorphic, for $\Vcal$ denoting the set of entire functions of exponential type (see Remark~\ref{remark:nofinitemoments}\ref{entire functions finite order } for the definition of entire function of exponential type).
\end{example}

\subsection{The holomorphic formula}\label{sec: the holomorphic formula}

Fix $\Vcal\subseteq\mathcal{O}(S)$ and let $X=(X_t)_{t\in[0,T]}$ be an $S$-valued $\Vcal$-holomorphic process. Denote by $\mathcal{A}$ its extended generator and recall that by the properties of the functions in $\mathcal{O}(S)$, there exists a  linear map $ L: \mathcal{V}^*\rightarrow \mathcal{O}(S),$
such that for all $\u\in\mathcal{V}^*$,
\begin{align}\label{eq: Ah=  dual point of view}
   \mathcal{A}h_\u=h_{L(\u)}|_S,
\end{align}
where $\Vcal^*$ denotes the set of coefficients determining some power series representation on $P^d_{R(S)}(0)$ of the functions in $\Vcal$. In this section, we rely on the \textit{duality theory}, as presented in Chapter 4 of \cite{EK} (see also Section 2.1 in \cite{CST:23}), to compute expected values of $h_\u(X_t)$ for $t\in [0,T]$ and $h_\u\in \mathcal{V}$. In particular, we show that condition \eqref{eq: Ah=  dual point of view} is in fact the key property that allows to recognize  
a sequence valued-solution of the linear ODE $$\partial_t\c(t)=L(\c(t)), \quad \c(0)=\u$$
as (one possible choice of) a dual process\footnote{We refer to page 188 in \cite{EK} for the precise notion of a dual process.} of $X$.
This then implies that computing expected values of holomorphic functions of holomorphic processes reduces to solving an (infinite-dimensional) system of  linear ODEs.

Recall from Section~\ref{sec212} that integrals of sequences are defined componentwise.
\begin{theorem}\label{th: moment formula holomorphic}
    Set $X_0=x_0\in S$, fix $\u\in \Vcal^*$, and suppose that the following conditions hold true.
\begin{enumerate}\phantomsection
    \item \label{moment formula i}The sequence-valued linear ODE
    \begin{align}\label{eq:ODE sequence space holomorphic}
        \c(t)=\u+\int_0^tL(\c(s))ds, \qquad t\in [0,T],
    \end{align}
    admits a $\Vcal^*$-valued solution $(\c(t))_{t\in[0,T]}$  such that
   \begin{align}\label{eq: ODE pointwise}
       h_{\c(t)}(x)=h_\u(x)+\int_0^th_{L(\c(s))}(x)ds,
   \end{align}
  for all $x\in S$, $t\in[0,T]$.

    \item \label{moment formula ii}The process $(N^{h_{\c(s)}}_t)_{t\in[0,T]}$ given in equation \eqref{eq:localmart} defines a true martingale for each $s\in [0,T].$
    \item \label{moment formula iii}$\int_0^T\int_0^T\E[|\mathcal{A}h_{\c(s)}(X_t)|]dsdt<\infty$.
\end{enumerate}
Then for each $\u\in \Vcal*$ it holds that
\begin{align}\label{eq: holomorphic formula}
    \E[h_\u(X_T)]=h_{\c(T)}(x_0).
\end{align}
\end{theorem}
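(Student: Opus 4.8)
The plan is to run a \emph{duality} argument in the spirit of Chapter~4 of \cite{EK}, taking as duality function $H(x,\u):=h_\u(x)$, with $X$ acting in the $x$-variable and the deterministic flow $(\c(t))_{t\in[0,T]}$ from condition~\ref{moment formula i} playing the role of the dual process in the $\u$-variable. The identity $\mathcal{A}h_\u=h_{L(\u)}|_S$, combined with the pointwise ODE $\partial_t h_{\c(t)}(x)=h_{L(\c(t))}(x)$ supplied by \eqref{eq: ODE pointwise}, is precisely the generator duality relation \eqref{eq:generator}, namely $\mathcal{A}H(\cdot,\u)(x)=H(x,L(\u))=\mathcal{B}H(x,\cdot)(\u)$ with $\mathcal{B}$ the generator of the flow. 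The desired identity \eqref{eq: holomorphic formula} is then the dual relation evaluated at the two endpoints of the time interval.

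Concretely, I would introduce
\[
G(s,t):=\E[h_{\c(s)}(X_t)],\qquad \Phi(s,t):=\E[h_{L(\c(s))}(X_t)],\qquad (s,t)\in[0,T]^2,
\]
and derive two integral representations of $G$ that identify both its partial derivatives with the \emph{same} function $\Phi$. First, since $X_t\in S$ almost surely, evaluating \eqref{eq: ODE pointwise} at $x=X_t$, taking expectations, and interchanging $\E$ with the time integral (licensed by condition~\ref{moment formula iii}) yields
\[
G(s,t)=G(0,t)+\int_0^s\Phi(r,t)\,dr .
\]
Second, because $\mathcal{A}h_{\c(s)}=h_{L(\c(s))}$ on $S$ and, by condition~\ref{moment formula ii}, the local martingale $N^{h_{\c(s)}}$ of \eqref{eq:localmart} is a true martingale, taking expectations in \eqref{eq:localmart} (again using condition~\ref{moment formula iii} to pull $\E$ inside) gives
\[
G(s,t)=h_{\c(s)}(x_0)+\int_0^t\Phi(s,r)\,dr .
\]
Note that $G(0,t)=\E[h_\u(X_t)]$ since $\c(0)=\u$, and $G(s,0)=h_{\c(s)}(x_0)$ since $X_0=x_0$; in particular $G(0,T)=\E[h_\u(X_T)]$ and $G(T,0)=h_{\c(T)}(x_0)$ are the two sides of \eqref{eq: holomorphic formula}.

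The conclusion is that $G$ is constant along the anti-diagonal: formally $\tfrac{d}{dt}G(T-t,t)=-\partial_sG+\partial_tG=0$, so $G(0,T)=G(T,0)$. To avoid differentiating a merely integrable object, I would instead integrate $\Phi$ over the simplex $\{(s,r):s+r\le\theta\}$ and apply Fubini (licensed by $\Phi\in L^1([0,T]^2)$, i.e.\ condition~\ref{moment formula iii}). Evaluating the two orders of integration via the representations above and using the change of variables $\int_0^\theta G(s,\theta-s)\,ds=\int_0^\theta G(\theta-t,t)\,dt$, the anti-diagonal terms cancel and one is left with
\[
\int_0^\theta G(s,0)\,ds=\int_0^\theta G(0,t)\,dt,\qquad \theta\in[0,T].
\]
Differentiating in $\theta$, and invoking continuity of $s\mapsto h_{\c(s)}(x_0)$ (from the integral equation in condition~\ref{moment formula i}) and of $t\mapsto\E[h_\u(X_t)]$, yields $G(\theta,0)=G(0,\theta)$ for all $\theta$, and at $\theta=T$ this is exactly \eqref{eq: holomorphic formula}.

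The main obstacle is the analytic bookkeeping rather than the idea. Three points must be controlled: that the flow $\c$ stays $\mathcal{V}^*$-valued so that each $h_{\c(s)}$ lies in $\Dcal(\Acal)$ and the representations make sense (granted by condition~\ref{moment formula i}); that $N^{h_{\c(s)}}$ has vanishing expected increments, which is exactly condition~\ref{moment formula ii}; and, most delicately, the repeated interchanges of expectation and time integration — both in deriving the first representation and in the final Fubini over the simplex — which all rest on the joint integrability $\int_0^T\int_0^T\E[|\mathcal{A}h_{\c(s)}(X_t)|]\,ds\,dt<\infty$ of condition~\ref{moment formula iii}, together with joint measurability of $(s,t)\mapsto\Phi(s,t)$.
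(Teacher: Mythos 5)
Your proposal is correct and takes essentially the same route as the paper: the paper's proof consists precisely of setting up your two objects ($Y^1(s,t)=h_{\c(s)}(X_t)$, $Y^2(s,t)=h_{L(\c(s))}(X_t)$), extracting the martingale identity from condition~\ref{moment formula ii}, the pointwise ODE identity from condition~\ref{moment formula i}, and the joint integrability from condition~\ref{moment formula iii}, and then concluding by citing the abstract duality result, Lemma~A.1 in \cite{CST:23}. The only difference is that you inline the proof of that lemma — your simplex Fubini argument yielding $\int_0^\theta G(s,0)\,ds=\int_0^\theta G(0,t)\,dt$ and hence $G(T,0)=G(0,T)$ is exactly the standard Ethier--Kurtz-type duality argument that the cited lemma encapsulates.
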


\begin{proof}
    We verify the conditions of Lemma A.1 in \cite{CST:23}. Set
    \begin{align*}
        Y^1(s,t):=h_{\c(s)}(X_t),\qquad Y^2(s,t):=h_{L(\c(s))}(X_t).
    \end{align*}
    By continuity of $Y^1(\cdot,t)(\omega)$ and measurability (on $[0,T]\times \Omega$) of $Y^1(s,\cdot)$ the two maps $Y^1,Y^2:[0,T]\times [0,T]\times \Omega\longrightarrow\C$ are measurable functions.
    Next, observe that by the assumption~\ref{moment formula ii} it holds that for each $s,t\in[0,T]$, the process $(N^{h_{\c(s)}}_t)_{t\in[0,T]}$, whose explicit form reads 
       \begin{align}\label{eq:true martingale}
           h_{\c(s)}(X_t)-h_{\c(s)}(x_0)-\int_0^t \A h_{\c(s)}(X_u)du, 
       \end{align}
       is a true martingale. Moreover,  by  assumption~\ref{moment formula i} 
       it holds that for every    $s,t\in [0,T]$,
    \begin{align}\label{eq:solution ode}
      h_{\c(s)}(X_t)-h_{\c(0)}(X_t)-\int_0^s \A h_{\c(u)}(X_t)du=0.
    \end{align}
   Finally, taking expectation in both equations \eqref{eq:true martingale}, \eqref{eq:solution ode} and by assumption~\ref{moment formula iii},  all the hypotheses of the above mentioned lemma are satisfied, and thus the claim follows.
\end{proof}

\begin{remark}\phantomsection\label{rem2}
\begin{enumerate}
\item\label{rem2i} An inspection of the proof shows that \eqref{eq:ODE sequence space holomorphic} is not necessary. Specifically, condition~\ref{moment formula i} can be replaced by the assumption of the existence of a $\Vcal^*$-valued map $(\c(t))_{t\in[0,T]}$  such that $$
       h_{\c(t)}(x)=h_\u(x)+\int_0^th_{L(\c(s))}$$
  for all $x\in S$, $t\in[0,T]$.

    On the other hand if  $\int_0^T|L(\c(s))|_xds<\infty$ for all $x\in S$, $t\in [0,T]$ then condition \eqref{eq:ODE sequence space holomorphic} implies \eqref{eq: ODE pointwise}. 
    \item \label{rem2_2}Recall that there might be more than one map $L$ (corresponding to different representations of the functions $\mathcal{A}h$ on $S$ for $h\in \mathcal{V}$) for which \eqref{eq: Ah=  dual point of view} holds true. However, this variability is not an issue for the objectives of this paper. The duality approach requires only the existence of a dual process, which in this context then refers to the existence of a solution to some sequence-valued ODE that satisfies \eqref{eq: Ah= dual point of view}.

      \item Notice that the claim of Theorem~\ref{th: moment formula holomorphic} for the $\mathcal{V}$-holomorphic process $X$ coincides with the assertion of Theorem 3.21 in \cite{CST:23} for the (infinite-dimensional) $\mathcal{V}^*$-polynomial process $\mathbb{X}$ introduced in Equation \eqref{eq: boldX infinite}. 
    
    \item Let $X$ be a $S$-valued polynomial jump-diffusion and recall from Remark~\ref{rem: poly processes are holomorphic} that $X$ is a $S$-valued $\mathcal{V}$-holomorphic process, for  $\Vcal=   \{p|_S\text{  for some polynomial }p:\R^d\to \R\}$.   In this case, the holomorphic formula \eqref{eq: holomorphic formula} coincides with the so-called moment formula in Theorem 1 in \cite{FL:20} and Theorem 2.7 in \cite{CKT:12}. Notice in particular that the sequence-valued linear ODE in \eqref{eq:ODE sequence space holomorphic} reduces to a finite dimensional system of linear ODEs, and~\ref{moment formula i},\ref{moment formula ii},\ref{moment formula iii} are always satisfied (see also Section~4.2 in \cite{CS:21}).
    \item\label{rem2ii} Observe that in principle there could exists two $S$-valued $\Vcal$-holomorphic processes $X$ and $Y$ sharing the same generator $\Acal$. If condition~\ref{moment formula i} of Theorem~\ref{th: moment formula holomorphic} is satisfied for some  $(\c(t))_{t\in[0,T]}$ and  conditions~\ref{moment formula ii} and~\ref{moment formula iii} are satisfied for both $X$ and $Y$ we can conclude that
    $$\E[h_\u(X_T)]=\E[h_\u(Y_T)],$$
    for each $\u\in \Vcal^*$. 
    If instead $\Vcal$ is rich enough, e.g. it contains some exponential functions,  uniqueness hold.
    \end{enumerate}
\end{remark}

    \subsection{Sufficient conditions for the application of the holomorphic formula}
This section is dedicated to the study of sufficient conditions for the application of Theorem~\ref{th: moment formula holomorphic}. To start, we provide sufficient conditions for the existence of a $\Vcal^*$-valued map $(\c(t))_{t\in[0,T]}$ such that condition \eqref{eq: ODE pointwise} holds. Recall that by Remark~\ref{rem2}~\ref{rem2i} this can substitute assumption~\ref{moment formula i} in Theorem~\ref{th: moment formula holomorphic}. Suppose that  $X$ is a time-homogeneous Markov process with semigroup $(P_t)_{t\in [0,T]}$, that is
$$P_tf(x)=\E[f(X_t)|X_0=x],\qquad x\in S,$$
for each measurable function $f:S\to \R$ such that $P_t|f|(x)<\infty$ for each $x\in S$. 
The next lemma is an adaptation of Lemma 2.6 in \cite{CKT:12}.
Observe that the action of the corresponding extended generator (in the sense of Definition 2.3 in \cite{CKT:12}) on functions $f\in \Vcal$ corresponds here to $\Acal f$.

\begin{lemma}\label{lemma:AP=PA}
 Let $f\in \mathcal{V}$ and $\u\in \Vcal^*$ be such that $f=h_\u|_S$. Suppose that:
 \begin{enumerate}
     \item  the process $N^f$ introduced in \eqref{eq:localmart} is a true martingale;
     \item $\int_0^T P_s|\Acal f|(x)ds<\infty$ 
for all $x\in S$;
     \item   $P_tf\in\Vcal$ for all $t\in [0,T]$;
     \item $S\ni x\mapsto P_t\mathcal{A}f(x)$ is continuous.
      \end{enumerate}  
 Then for all $x\in S$, $P_t\Acal f(x)=\Acal P_tf(x)$ and each 
$\Vcal^*$-valued map $(\c(t))_{t\in[0,T]}$ such that
$$    P_tf(x)=h_{\c(t)}(x),$$
satisfies \eqref{eq: ODE pointwise} with initial condition $\u$.
\end{lemma}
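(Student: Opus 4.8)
The plan is to turn the true-martingale hypothesis into a forward (Dynkin) identity, then combine it with the semigroup property and the fact that $P_tf$ is again in $\mathcal{V}$ to extract the backward relation $\mathcal{A}P_tf=P_t\mathcal{A}f$; the pointwise ODE then falls out by rewriting the forward identity. First I would use hypothesis (i): since $N^f$ is a true martingale with $N^f_0=0$, taking $\E_x[\,\cdot\,]$ in \eqref{eq:localmart} and interchanging expectation with the time-integral — justified by the integrability hypothesis (ii), $\int_0^T P_s|\mathcal{A}f|(x)\,ds<\infty$, via Fubini — gives the forward identity
\[
P_tf(x)=f(x)+\int_0^t P_s\mathcal{A}f(x)\,ds,\qquad x\in S,\ t\in[0,T].\qquad(*)
\]

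Next I would produce two expressions for the increment $P_{t+r}f-P_tf$. Applying $(*)$ at times $t+r$ and $t$ and using the semigroup property $P_{t+u}=P_uP_t$ yields $P_{t+r}f(x)-P_tf(x)=\int_0^r P_u\bigl(P_t\mathcal{A}f\bigr)(x)\,du$. On the other hand, since $P_tf\in\mathcal{V}$ (hypothesis (iii)) and $X$ is $\mathcal{V}$-holomorphic, we have $P_tf\in\mathcal{D}(\mathcal{A})$ with $N^{P_tf}$ a local martingale; promoting this to the Dynkin identity for $g=P_tf$ and using $P_{t+r}=P_rP_t$ gives $P_{t+r}f(x)-P_tf(x)=\int_0^r P_u\bigl(\mathcal{A}P_tf\bigr)(x)\,du$. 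Equating the two for every $r\in[0,T-t]$ forces the integrals to coincide. Both integrands are right-continuous at $u=0$: $P_t\mathcal{A}f$ by hypothesis (iv), and $\mathcal{A}P_tf$ because it lies in $\mathcal{O}(S)$ and is therefore the restriction of a holomorphic, hence continuous, function, which together with the stochastic continuity of the \cadlag process $X$ gives $P_ug(x)\to g(x)$ as $u\downarrow0$ for $g\in\{P_t\mathcal{A}f,\mathcal{A}P_tf\}$. Dividing by $r$ and letting $r\downarrow0$ then yields the first claim $\mathcal{A}P_tf(x)=P_t\mathcal{A}f(x)$.

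Finally, for the ODE, let $\c(t)\in\mathcal{V}^*$ satisfy $h_{\c(t)}|_S=P_tf$. By \eqref{eq: Ah_u=h_Lu} together with the fact that $\mathcal{A}h|_S$ depends only on $h|_S$, we obtain $h_{L(\c(s))}|_S=\mathcal{A}h_{\c(s)}|_S=\mathcal{A}P_sf=P_s\mathcal{A}f$, while $h_\u|_S=f$; substituting these into $(*)$ reproduces \eqref{eq: ODE pointwise} with initial condition $\c(0)=\u$, as required.

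The hard part will be the backward identity used in the second expression for the increment. Hypothesis (i) supplies the true-martingale property — and hence the Dynkin formula — only for $N^f$ itself, not for $N^{P_tf}$, so one must separately upgrade the local martingale $N^{P_tf}$ (which comes for free from $P_tf\in\mathcal{V}$) to a true martingale before expectations may be taken; I expect this to be done by localization combined with a uniform-integrability estimate assembled from hypotheses (ii) and (iv). The subsequent passage from the integrated equality of the two representations to the pointwise identity $\mathcal{A}P_tf=P_t\mathcal{A}f$ also genuinely relies on the continuity furnished by (iv), which is why that hypothesis is imposed.
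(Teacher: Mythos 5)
Your scaffolding (forward identity via Fubini, then two representations of an increment, then continuity to pass to the pointwise identity) parallels the paper's, and your identity $(*)$ is exactly the paper's first step. The genuine gap is the step you yourself flag as ``the hard part'': the Dynkin identity for $g=P_tf$, equivalently upgrading the local martingale $N^{P_tf}$ to a true martingale. Your proposed repair --- localization plus a uniform-integrability estimate ``assembled from hypotheses (ii) and (iv)'' --- cannot be carried out: those hypotheses control $P_s|\mathcal{A}f|$ and $P_s\mathcal{A}f$, whereas any localization argument must dominate $\int_0^r|\mathcal{A}P_tf(X_u)|\,du$ (and the family $P_tf(X_{r\wedge\tau_n})$). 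About $\mathcal{A}P_tf$ you know only that it lies in $\mathcal{O}(S)$; no bound relating it to $P_t|\mathcal{A}f|$ is available at this stage --- indeed, relating $\mathcal{A}P_tf$ to $P_t\mathcal{A}f$ is precisely what the lemma asserts, so the proposed estimate is circular. The same flaw appears in miniature in your differentiation step: right-continuity of $u\mapsto P_u(\mathcal{A}P_tf)(x)$ at $u=0$ again needs a domination of $\mathcal{A}P_tf(X_u)$ that the hypotheses do not supply.

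The paper closes this gap with a device missing from your proposal. Instead of taking expectations of $N^{P_sf}$, it shows directly that the candidate process
$$P_sf(X_t)-P_sf(X_0)-\int_0^t P_s\mathcal{A}f(X_r)\,dr$$
is a \emph{true} martingale: integrability follows from $\E[|P_sf(X_t)|]\le P_{s+t}|f|(x)$ and $\int_0^t\E[|P_s\mathcal{A}f(X_r)|]\,dr\le\int_0^t P_{s+r}|\mathcal{A}f|(x)\,dr$ (hypotheses (i)--(ii)), and the martingale property is a computation using the Markov property together with the forward identity evaluated at $X_u$ --- no information about $\mathcal{A}P_sf$ enters. Since $P_sf(X_\cdot)-P_sf(X_0)$ is a special semimartingale admitting both this decomposition and the decomposition $N^{P_sf}_\cdot+\int_0^\cdot\mathcal{A}P_sf(X_r)\,dr$ with $N^{P_sf}$ a local martingale, uniqueness of the canonical decomposition yields simultaneously that $N^{P_sf}$ is a true martingale and that $\int_0^t\mathcal{A}P_sf(X_r)\,dr=\int_0^t P_s\mathcal{A}f(X_r)\,dr$ a.s.; dividing by $t$ and letting $t\downarrow0$ along the c\`adl\`ag path started at $x$, continuity of $\mathcal{A}P_sf\in\mathcal{O}(S)$ and of $P_s\mathcal{A}f$ (hypothesis (iv)) gives $\mathcal{A}P_sf(x)=P_s\mathcal{A}f(x)$ pathwise, with no dominated-convergence step. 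Your final paragraph (substituting into the forward identity to obtain \eqref{eq: ODE pointwise}) is fine once this identity is available, but as written your proof is incomplete at its central step, and the fix you sketch would not succeed with the stated hypotheses.
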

\begin{proof}
  Since by assumption $N^f$ is a true martingale and for each $x\in S$, $\int_0^T P_s|\Acal f|(x)ds<\infty$, by Fubini theorem, for each $t\in [0,T]$, 
    \begin{equation}\label{eqn121}
        P_tf(x)-P_0f(x)-\int_0^t P_s\Acal f(x)ds=\E[N^f_t-N^f_0|X_0=x]=0.
    \end{equation}
Fix $s\in [0,T]$ and notice that by definition of $\mathcal{V}$-holomorphic process, since $P_sf\in\Vcal$, the process $N^{P_sf}$ given by
    \begin{align*}
        N^{P_sf}_t=P_sf(X_t)-P_sf(X_0)-\int_0^t\mathcal{A}P_sf(X_r)dr, \qquad t\in[0,T],
    \end{align*}
    is a local martingale. 
    Next, we show that the process
    \begin{equation}\label{eqnnewmart1}
        P_sf(X_t)-P_sf(X_0)-\int_0^tP_s\mathcal{A}f(X_r)dr, \qquad t\in[0,T],
    \end{equation}
is a true martingale. Since for each $t\in [0,T]$ and each $x\in S$ $\int_0^t P_s|\Acal f|(x)ds<\infty$, and $N^f$ is a true martingale,  $P_t|f|(x)<\infty$ and thus
    $\E[|P_sf(X_t)|]\leq P_{s+t}|f|(x)<\infty$ and $$\int_0^t\E[|P_s\mathcal{A}f(X_r)|]dr\leq \int_0^t P_{s+r}|\Acal f|(x)dr<\infty,$$
showing that \eqref{eqnnewmart1} is integrable. By taking conditional expectation we thus obtain
\begin{align*}
&\E[P_sf(X_t)-P_sf(X_u)-\int_u^tP_s\mathcal{A}f(X_r)dr|\mathcal F_u]\\
&\qquad=\E[P_sf(X_{t-u})-P_sf(X_0)-\int_u^tP_s\mathcal{A}f(X_{r-u})dr|X_0=x]|_{x=X_u}\\
&\qquad=P_{s+t-u}f(X_u)-P_sf(X_u)-\int_u^tP_{s+r-u}\mathcal{A}f(X_{u})dr\\
&\qquad=P_{s+t-u}f(X_u)-P_sf(X_u)-\int_s^{s+t-u}P_{r}\mathcal{A}f(X_{u})dr=0,
\end{align*}
and the claim follows. By uniqueness of the decomposition of a special semimartingale, it follows that
$\int_0^t\mathcal{A}P_sf(X_r)dr=\int_0^tP_s\mathcal{A}f(X_r)dr$ and that $N^{P_sf}$ is given by \eqref{eqnnewmart1} and is in fact a true martingale. This implies condition~\ref{moment formula ii} of Theorem~\ref{th: moment formula holomorphic}. Finally, since $P_sf\in \mathcal{V}$ implies $\mathcal{A}P_sf\in \mathcal{O}(S)$, and by assumption $P_s\mathcal{A}f$ is continuous, we can conclude that for each $x\in S$,
$$\mathcal{A}P_sf(x)=P_s\mathcal{A}f(x).$$
Then, \eqref{eq: ODE pointwise} follows by \eqref{eqn121}.
\end{proof}

\begin{remark}\phantomsection\label{rem: lemma AP=PA} 
 An inspection of the proof of Lemma~\ref{lemma:AP=PA} shows that its assumptions imply condition~\ref{moment formula ii} of Theorem~\ref{th: moment formula holomorphic}.

 Furthermore, notice that  since 
$\int_0^T\int_0^TP_t|\Acal P_sf|(x)dsdt\leq\int_0^T\int_0^TP_{t+s}|\Acal f|(x)dsdt,$ if we additionally assume that 
$$\int_0^T\int_0^TP_{t+s}|\Acal h_\u|(x)dsdt<\infty$$
then condition~\ref{moment formula iii} of the same theorem holds. This can be interesting in view of Remark~\ref{rem2}.
\end{remark}

Next, we specify some conditions for the assumptions~\ref{moment formula ii} and~\ref{moment formula iii} in Theorem~\ref{th: moment formula holomorphic} to be satisfied. 
Additionally to the assumption made at the beginning of the section, we suppose that for each $\u\in\Vcal^*$, there exists a $\Vcal^*$-valued solution of the linear ODE \eqref{eq:ODE sequence space holomorphic} which satisfies \eqref{eq: ODE pointwise}, with initial value $\u$, that we denote by $(\c(t))_{t\in[0,T]}$.

The first result pertains to holomorphic processes whose extended generator $\mathcal{A}$ acts between (the restriction on $S$ of) weighted spaces of holomorphic functions (see Definition~\ref{def2}).

 \begin{lemma}\label{lemma: lemma moment formula weights}
Assume that $\Vcal\subseteq H_v(\C^d)$ and that $\mathcal{A}(\mathcal{V})\subseteq H_w(\C^d) $, for some weight functions $v$ and $w$. Suppose furthermore that one of the following conditions hold true for each $s\in [0,T]$.
   \begin{enumerate}
       \item $
       \int_0^T\|h_{L(\c(s))}\|_wds<\infty$
       and  $ \E[\sup_{t\leq T}v(\|X_t\|)],\E[\sup_{t\leq T}w(\|X_t\|)]<\infty.$
       \item \label{it ii weight} $\|h_{\c(s)}^p\|_v,\|h_{L(\c(s))}^p\|_w,\int_0^T\|h_{L(\c(s))}\|_wds<\infty$ for  $p>1$
       and  $ \E[v(\|X_T\|)],\int_0^T\E[w(\|X_t\|)]dt<\infty$.
   \end{enumerate}
Then\footnote{An inspection on the proof shows that the supremum over $\C^d$ defining $\|\cdot\|_v$ can be replaced by a supremum over $\R^d$.} conditions~\ref{moment formula ii} and~\ref{moment formula iii} of Theorem~\ref{th: moment formula holomorphic} are satisfied.
  
\end{lemma}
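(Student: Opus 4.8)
The plan is to verify the two outstanding hypotheses of Theorem~\ref{th: moment formula holomorphic}, namely the true-martingale property~\ref{moment formula ii} and the double-integrability~\ref{moment formula iii}, starting from the fact that $N^{h_{\c(s)}}$ is \emph{a priori} a local martingale (by the very definition of a $\Vcal$-holomorphic process) and that $\A h_{\c(s)}=h_{L(\c(s))}|_S$. The engine throughout will be the two pointwise weight estimates $|h_{\c(s)}(x)|\le\|h_{\c(s)}\|_v\,v(\|x\|)$ and $|h_{L(\c(s))}(x)|\le\|h_{L(\c(s))}\|_w\,w(\|x\|)$ for $x\in\R^d$, together with $\|h_{\c(s)}\|_v<\infty$ (since $\c(s)\in\Vcal^*$ and $\Vcal\subseteq H_v(\C^d)$) and $\|h_{L(\c(s))}\|_w<\infty$ (since $\A(\Vcal)\subseteq H_w(\C^d)$). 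Evaluated along the process, these convert the analytic quantities into moments of $v(\|X_t\|)$ and $w(\|X_t\|)$, which is exactly what the moment assumptions are designed to control.

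I would first dispatch~\ref{moment formula iii}, which needs no martingale theory: using $\A h_{\c(s)}=h_{L(\c(s))}|_S$ and the $w$-bound, Tonelli gives $\int_0^T\!\int_0^T\E[|\A h_{\c(s)}(X_t)|]\,ds\,dt\le\big(\int_0^T\|h_{L(\c(s))}\|_w\,ds\big)\big(\int_0^T\E[w(\|X_t\|)]\,dt\big)$, and both factors are finite under either set of hypotheses (the second factor either because $\E[\sup_{t\le T}w(\|X_t\|)]<\infty$ or because $\int_0^T\E[w(\|X_t\|)]\,dt<\infty$ is assumed outright). For~\ref{moment formula ii} under the first set of conditions the strategy is to dominate the running supremum: bounding the three terms of $N^{h_{\c(s)}}_t$ yields $\sup_{t\le T}|N^{h_{\c(s)}}_t|\le\|h_{\c(s)}\|_v\sup_{t\le T}v(\|X_t\|)+\|h_{\c(s)}\|_v\,v(\|x_0\|)+T\,\|h_{L(\c(s))}\|_w\sup_{t\le T}w(\|X_t\|)$. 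Taking expectations and invoking $\E[\sup_{t\le T}v(\|X_t\|)],\E[\sup_{t\le T}w(\|X_t\|)]<\infty$ gives $\E[\sup_{t\le T}|N^{h_{\c(s)}}_t|]<\infty$; since a local martingale with integrable running supremum is a uniformly integrable true martingale, this delivers~\ref{moment formula ii}.

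The hard part will be~\ref{moment formula ii} under the second set of conditions, where no supremum moments are available and only the terminal bound $\E[v(\|X_T\|)]<\infty$ is given. Here the plan is to exploit $p>1$: letting $(\tau_n)$ localize $N^{h_{\c(s)}}$, I would apply Doob's $L^p$-maximal inequality to the genuine martingales $N^{h_{\c(s)}}_{\cdot\wedge\tau_n}$ to obtain $\E[\sup_{t\le T}|N^{h_{\c(s)}}_{t\wedge\tau_n}|^p]\le C_p\,\E[|N^{h_{\c(s)}}_{T\wedge\tau_n}|^p]$, and then bound the right-hand side uniformly in $n$. The drift contribution is controlled by Jensen's inequality in the time variable together with $\|h_{L(\c(s))}^p\|_w<\infty$ and $\int_0^T\E[w(\|X_t\|)]\,dt<\infty$, whereas the terminal contribution $\E[|h_{\c(s)}(X_{T\wedge\tau_n})|^p]$ is where the $v$-weight bound and $\E[v(\|X_T\|)]<\infty$ must be brought to bear. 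Once $\sup_n\E[|N^{h_{\c(s)}}_{T\wedge\tau_n}|^p]<\infty$ is secured, Fatou upgrades the maximal inequality to $\E[\sup_{t\le T}|N^{h_{\c(s)}}_t|^p]<\infty$, and since $p>1$ this forces $\E[\sup_{t\le T}|N^{h_{\c(s)}}_t|]<\infty$, reducing the problem to the first case.

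The most delicate step—the one I expect to require the greatest care—is precisely this uniform-in-$n$ estimate of the stopped terminal values, that is, passing from the single terminal moment $\E[v(\|X_T\|)]$ to control of $\sup_n\E[v(\|X_{T\wedge\tau_n}\|)]$. It is here that the strict integrability $p>1$ (as opposed to mere $L^1$ terminal integrability, which would be compatible with a genuine strict local martingale) is essential, since it is exactly what supplies the uniform integrability needed to pass to the limit in the localized martingale identity and conclude.
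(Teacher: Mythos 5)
Your handling of condition~\ref{moment formula iii} and of condition~\ref{moment formula ii} under the first set of hypotheses is correct and coincides with the paper's own argument: the pointwise weight bounds $|h_{\c(s)}|\le\|h_{\c(s)}\|_v\,v(\|\cdot\|)$ and $|h_{L(\c(s))}|\le\|h_{L(\c(s))}\|_w\,w(\|\cdot\|)$, Tonelli for the double time-integral, and the fact that a local martingale with integrable running supremum is a true martingale.

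The genuine gap is in case~\ref{it ii weight}, at precisely the step you flag as delicate, and it is worse than delicate: it cannot be closed. Your claim that $p>1$ ``supplies the uniform integrability needed'' to bound $\sup_n\E[|N^{h_{\c(s)}}_{T\wedge\tau_n}|^p]$ (equivalently $\sup_n\E[v(\|X_{T\wedge\tau_n}\|)]$) from the single terminal moment $\E[v(\|X_T\|)]<\infty$ is false: terminal $L^p$-boundedness of a local martingale for some $p>1$ is perfectly compatible with strict local martingality. Concretely, take $S=(0,\infty)$, $d=1$, and let $X$ solve $dX_t=X_t^2\,dB_t$, $X_0=x_0>0$, so that $X=1/R$ for a Bessel(3) process $R$; this is an $S$-valued jump-diffusion with $b=0$, $a(x)=x^4$, $K=0$, and it is $\Vcal$-holomorphic for $\Vcal$ the affine functions. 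Take $h_\u(x)=x$, so that $\A h_\u\equiv 0$, $L(\u)=0$ and $\c(s)\equiv\u$ solves the linear ODE, and take $v(t)=w(t)=(1+t)^2$, $p=2$. Every hypothesis of case~\ref{it ii weight} is then satisfied, because $\E[X_t^2]=\E[R_t^{-2}]<\infty$ (the inverse Bessel(3) process has finite moments of every order strictly below $3$) and this quantity is bounded in $t\in[0,T]$; yet $N^{h_\u}=X-x_0$ is a strict local martingale, and indeed $\E[\sup_{t\le T}X_t^2]=\infty$. Hence no choice of localizing sequence can produce the uniform bound your plan requires, and in fact the conclusion itself (condition~\ref{moment formula ii} of Theorem~\ref{th: moment formula holomorphic}) fails under the stated hypotheses. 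For completeness: the paper's own proof of this case has the same defect in a less visible form, since it applies Doob's $L^p$ maximal inequality $\E[\sup_{t\le T}|N^{h_{\c(s)}}_t|^p]\le C\,\E[|N^{h_{\c(s)}}_T|^p]$ directly to the local martingale $N^{h_{\c(s)}}$, thereby presupposing the martingale property being proved; the example above shows this inequality genuinely fails for strict local martingales. Your localization makes the hidden assumption explicit, but repairing case~\ref{it ii weight} requires strengthening the hypotheses (e.g.\ sup-moments of $v(\|X_\cdot\|)$ as in the first case, or control of $v(\|X\|)$ along stopping times), not a finer limiting argument.
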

\begin{proof} Since for each $s\in [0,T]$, $\c(s)\in \mathcal{V}^*$, and $X$ is an $S$-valued $\Vcal$-holomorphic process, it holds that the process $(N^{h_{\c(s)}}_t)_{t\in[0,T]}$ given in equation \eqref{eq:localmart} defines a local martingale for every $s\in [0,T]$. Consider the first set of assumptions and note that since 
\begin{align*}
    \E[\sup_{t\leq T}|N_t^{h_{\c(s)}}|]&\leq 2 \E[\sup_{t\leq T}|h_{\c(s)}(X_t)|]+T\E[\sup_{t\leq T}|\mathcal{A}h_{\c(s)}(X_t)|]\\
    &\leq \|h_{\c(s)}\|_v\E[\sup_{t\leq T}v(\|X_t\|)]+T \|h_{L(\c(s))}\|_w\E[\sup_{t\leq T}w(\|X_t\|)]<\infty,
\end{align*}
we can conclude that $(N^{h_{\c(s)}}_t)_{t\in[0,T]}$ is a true martingale and thus condition~\ref{moment formula ii}  of Theorem~\ref{th: moment formula holomorphic}.
Similarly, since 
        \begin{align*}
\int_0^T\int_0^T\E[|\mathcal{A}h_{\c(s)}(X_t)|]dsdt \leq T\int_0^T\|h_{L(\c(s))}\|_wds\ \E[\sup_{t\leq T}w(\|X_t\|)]<\infty,
        \end{align*}
we can conclude that condition~\ref{moment formula iii} of the same theorem holds too.

With the second set of assumptions by Doob's inequality we have that
\begin{align*}
    \E[\sup_{t\leq T}|N_t^{h_{\c(s)}}|^p]
    &\leq C
    \E[|N_T^{h_{\c(s)}}|^p]
    \leq 2C' \E[|h_{\c(s)}(X_T)|^p]+C'\int_0^T\E[|\mathcal{A}h_{\c(s)}(X_t)|^p]dt\\
    &\leq 2C'\|h_{\c(s)}^p\|_v\E[v(\|X_T\|)]+ C'\|h_{L(\c(s))}^p\|_w\int_0^T\E[w(\|X_t\|)]dt<\infty,
\end{align*}
proving condition~\ref{moment formula ii} and
\begin{align*}
\int_0^T\int_0^T\E[|\mathcal{A}h_{\c(s)}(X_t)|]dsdt \leq \int_0^T\int_0^T\|h_{L(\c(s))}\|_wds\ \E[w(\|X_t\|)]dt<\infty,
        \end{align*}
proving condition~\ref{moment formula iii}   and     concluding the proof.
\end{proof}

In the next result we use a Gronwall-type argument to deduce some integrability conditions needed for proving~\ref{moment formula ii} and~\ref{moment formula iii} of Theorem~\ref{th: moment formula holomorphic}.  A similar argument is used in the classical case (see Theorem 2.10 in \cite{CKT:12}) to prove finiteness of moments of polynomial jump-diffusions and in the infinite dimensional setting (see Definition 3.18 and Lemma 3.19 in \cite{CS:21}) for similar purposes.
\begin{definition}\label{def:g-cyclical}
   Let $\mathcal{A}$ be the extended generator of an $S$-valued jump-diffusion, and fix $g:\Rset^d\rightarrow \Rset_+$, with $ g\in \mathcal{D}(\mathcal{A})$. We say that $\mathcal{A}$ is $g$-\emph{cyclical} if $ |\mathcal{A}g(x)|g(x)^{-1}<\infty$ for all $x\in S$. 
\end{definition}
\begin{remark}
\begin{enumerate}
 \item Notice that from the polynomial property of the extended generator $\mathcal{A}$ of a polynomial jump-diffusion, one can deduce that $\mathcal{A}$ is g-cyclical for $g(x):=1+\|x\|^{2k}$, $x\in \Rset$, for every $k\in \N$. Observe however that, since in the setting of holomorphic processes we deal with the larger class of convergent power series (and not only with polynomials of finite degree), the same cyclical argument does not generally follow directly. 
    \item Following the same reasoning in the proof of Lemma 3.19 in \cite{CS:21} (see also Theorem 2.10  in \cite{CKT:12}), we get that if the extended generator of an $S$-valued jump-diffusion $X=(X_t)_{t\in [0,T]}$ is $g$-cyclical, then 
    \begin{equation}\label{eqn9}
    \E[g(X_t)]\leq g(x_0)\exp(Ct),
    \end{equation}
    for all $t\in [0,T]$. Thus, the map $g$ can play the role of a weight function to deduce sufficient conditions for~\ref{moment formula ii} and~\ref{moment formula iii} in Theorem~\ref{th: moment formula holomorphic}. To simplify the notation, for a map $f$ we set 
    \begin{align}\label{eq: f cyclical}
        \|f\|_g:=\sup_{x\in S}|f(x)|g(x)^{-1}.
    \end{align}
 Note that contrary to the setting in Definition~\ref{def2}, in \eqref{eq: f cyclical}, we consider the supremum only over the set $S$.
\end{enumerate}

\end{remark}
The proof of the next lemma follows the proof of Lemma~\ref{lemma: lemma moment formula weights} combined with $\eqref{eqn9}$.
\begin{lemma}
 Assume that the operator $\mathcal{A}$ is $g$-cyclical, for some function $g:\Rset^d\rightarrow \Rset_+$.
Fix $\u\in \mathcal{V}^*$ and let $(\c(t))_{t\in [0,T]}$ satisfy condition~\ref{moment formula i} of Theorem~\ref{th: moment formula holomorphic} with $\c(0)=\u$.
If for  each $s\in [0,T]$ it holds
$\|h_{\c(s)}^p\|_g,\|h_{L(\c(s))}^p\|_g$, and $\int_0^T\|h_{L(\c(s))}\|_gds<\infty$
 for some $p>1$,
 then conditions~\ref{moment formula ii} and~\ref{moment formula iii} of Theorem~\ref{th: moment formula holomorphic} are satisfied.
\end{lemma}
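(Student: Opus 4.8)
The plan is to mirror the proof of Lemma~\ref{lemma: lemma moment formula weights} under its second set of assumptions, taking $v=w=g$, and to replace the two moment bounds on the weight functions there by the Gronwall estimate \eqref{eqn9}. First I would record the consequences of $g$-cyclicality that drive everything: by \eqref{eqn9} there is a constant $C>0$ with $\E[g(X_t)]\le g(x_0)\exp(Ct)$ for all $t\in[0,T]$, whence both $\sup_{t\le T}\E[g(X_t)]\le g(x_0)\exp(CT)<\infty$ and $\int_0^T\E[g(X_t)]\,dt<\infty$. Since $(\c(s))_{s\in[0,T]}$ is $\Vcal^*$-valued, each $h_{\c(s)}\in\Vcal$, and because $X$ is $\Vcal$-holomorphic the process $N^{h_{\c(s)}}$ of \eqref{eq:localmart} is a local martingale. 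Moreover $\A h_{\c(s)}=h_{L(\c(s))}|_S$, so by the definition \eqref{eq: f cyclical} of $\|\cdot\|_g$ as a supremum over $S$ (which is exactly what is needed, as $X_t\in S$) one has the pointwise bounds $|h_{\c(s)}(x)|^p\le\|h_{\c(s)}^p\|_g\,g(x)$ and $|\A h_{\c(s)}(x)|^p\le\|h_{L(\c(s))}^p\|_g\,g(x)$ for all $x\in S$.

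For condition~\ref{moment formula ii} I would localize $N^{h_{\c(s)}}$, apply Doob's maximal inequality to $|N^{h_{\c(s)}}|^p$ along the localizing sequence, and pass to the limit by Fatou. Splitting $N_T^{h_{\c(s)}}$ into $h_{\c(s)}(X_T)-h_{\c(s)}(x_0)$ and $\int_0^T\A h_{\c(s)}(X_r)\,dr$, using $|a+b|^p\le 2^{p-1}(|a|^p+|b|^p)$ and Jensen's inequality $\big|\int_0^T f\,dr\big|^p\le T^{p-1}\int_0^T|f|^p\,dr$, and then the two pointwise bounds followed by \eqref{eqn9}, yields
\begin{align*}
\E\big[\sup_{t\le T}|N_t^{h_{\c(s)}}|^p\big]
\le 2C'\|h_{\c(s)}^p\|_g\,\E[g(X_T)]
+C'\|h_{L(\c(s))}^p\|_g\int_0^T\E[g(X_t)]\,dt<\infty,
\end{align*}
for a constant $C'>0$ absorbing the Doob constant and the factors $2^{p-1}$ and $T^{p-1}$. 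As $p>1$, this finite $p$-th moment bound makes $\{N_t^{h_{\c(s)}}\}_{t\le T}$ uniformly integrable, so the local martingale is in fact a true martingale, which is condition~\ref{moment formula ii}.

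For condition~\ref{moment formula iii} I would estimate, for each fixed $s,t\in[0,T]$, $\E[|\A h_{\c(s)}(X_t)|]=\E[|h_{L(\c(s))}(X_t)|]\le\|h_{L(\c(s))}\|_g\,\E[g(X_t)]$, and then integrate in $s$ and $t$ and factorize:
\begin{align*}
\int_0^T\!\!\int_0^T\E[|\A h_{\c(s)}(X_t)|]\,ds\,dt
\le\Big(\int_0^T\|h_{L(\c(s))}\|_g\,ds\Big)\Big(\int_0^T\E[g(X_t)]\,dt\Big)<\infty,
\end{align*}
which is finite by the hypothesis $\int_0^T\|h_{L(\c(s))}\|_g\,ds<\infty$ together with \eqref{eqn9}.

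I expect the only genuinely delicate step to be the upgrade from a local to a true martingale: one must verify that the $p$-th moment bound obtained from Doob's inequality along the localizing sequence (with the Fatou argument removing the stopping times) really delivers uniform integrability of the family $\{N_t^{h_{\c(s)}}\}_{t\le T}$, which is precisely where the assumption $p>1$ enters. Everything else is a routine combination of the pointwise weight estimates with the Gronwall bound \eqref{eqn9}.
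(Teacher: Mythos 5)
Your proposal is correct and takes essentially the same route as the paper: the paper's proof is precisely to repeat the argument of Lemma~\ref{lemma: lemma moment formula weights} (second set of assumptions, via Doob's inequality) with $v=w=g$ and with the moment bounds on the weights replaced by the Gronwall estimate \eqref{eqn9}, noting that $\|\cdot\|_g$ is a supremum over $S$ only, which suffices since $X_t\in S$. Your explicit localization/Fatou step for Doob's inequality is in fact slightly more careful than the paper's own write-up, which applies Doob's inequality to the local martingale directly.
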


To conclude, we discuss the case of holomorphic processes with values in a bounded state space $S$. Recall that $R(S)$ denotes the polyradius of the smallest closed polydisc which includes $S$ (see Section~\ref{sec: convergent power series on a given set}) and that for $\u\in\Vcal^*$, $z\in \C^d$, the notation $|\u|_z$ has been introduced in Equation \eqref{eqn2}.

\begin{lemma}\label{lemma: i ii moment formula bounded}
    Assume that $S$ is a bounded set and that 
    \begin{align}\label{eq: integral L s compact}
        \int_0^T |L(\c(s))|_{R(S)}ds<\infty.
    \end{align} Then condition \eqref{eq:ODE sequence space holomorphic} implies \eqref{eq: ODE pointwise} and 
    conditions~\ref{moment formula ii} and~\ref{moment formula iii} of Theorem~\ref{th: moment formula holomorphic} are satisfied.
\end{lemma}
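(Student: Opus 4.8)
The plan is to use the boundedness of $S$ to turn every pointwise estimate into a deterministic bound expressed through the weighted quantity $|\cdot|_{R(S)}$. The basic observation is that for any sequence $\mathbf{w}\in\S^*$ and any $x\in S$ one has $|\mathbf{w}|_x\le|\mathbf{w}|_{R(S)}$: indeed $|\mathbf{w}|_z=\sum_\alpha|\mathbf{w}_\alpha|\,|z|^\alpha/\alpha!$ depends only on $|z|$ and is nondecreasing in each $|z_i|$, while $|x_i|\le R_i(S)$ for every $x\in S$ by definition of $R(S)$. In particular $|h_{\mathbf{w}}(x)|\le|\mathbf{w}|_x\le|\mathbf{w}|_{R(S)}$ for all $x\in S$.

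First I would deduce the implication \eqref{eq:ODE sequence space holomorphic}$\Rightarrow$\eqref{eq: ODE pointwise}. Applying the above bound to $\mathbf{w}=L(\c(s))$ gives $\int_0^T|L(\c(s))|_x\,ds\le\int_0^T|L(\c(s))|_{R(S)}\,ds<\infty$ for every $x\in S$, using the hypothesis \eqref{eq: integral L s compact}. This is exactly the integrability condition under which Remark~\ref{rem2}\ref{rem2i} guarantees that \eqref{eq:ODE sequence space holomorphic} implies \eqref{eq: ODE pointwise}.

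Next I would obtain a time-uniform bound on $\c$. Reading \eqref{eq:ODE sequence space holomorphic} componentwise, estimating $|\c(t)_\alpha|\le|\u_\alpha|+\int_0^t|L(\c(s))_\alpha|\,ds$, multiplying by $R(S)^\alpha/\alpha!$ and summing over $\alpha$ (Tonelli applies, all terms being nonnegative) yields
\begin{align*}
\sup_{t\le T}|\c(t)|_{R(S)}\le|\u|_{R(S)}+\int_0^T|L(\c(s))|_{R(S)}\,ds=:M<\infty,
\end{align*}
where $|\u|_{R(S)}<\infty$ since $\u\in\Vcal^*\subseteq\S^*$ is the coefficient sequence of a function holomorphic on a polydisc strictly containing $\overline{P^d_{R(S)}(0)}$, so its Taylor series converges absolutely at the interior point $R(S)$.

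Finally I would verify conditions~\ref{moment formula ii} and~\ref{moment formula iii} directly from these bounds, using that $X$ is $S$-valued and that $\mathcal{A}h_{\c(s)}=h_{L(\c(s))}|_S$. Since $X_t\in S$, we have $|h_{\c(s)}(X_t)|\le|\c(s)|_{R(S)}\le M$ and $|\mathcal{A}h_{\c(s)}(X_t)|=|h_{L(\c(s))}(X_t)|\le|L(\c(s))|_{R(S)}$, the latter finite for each fixed $s$ because $L(\c(s))\in\S^*$. Hence the local martingale $N^{h_{\c(s)}}$ is dominated by the deterministic constant $2M+T|L(\c(s))|_{R(S)}$ and is therefore a true martingale, which is condition~\ref{moment formula ii}. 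The same pointwise bound gives $\E[|\mathcal{A}h_{\c(s)}(X_t)|]\le|L(\c(s))|_{R(S)}$, so
\begin{align*}
\int_0^T\!\!\int_0^T\E[|\mathcal{A}h_{\c(s)}(X_t)|]\,ds\,dt\le T\int_0^T|L(\c(s))|_{R(S)}\,ds<\infty,
\end{align*}
establishing condition~\ref{moment formula iii}. The argument is mostly bookkeeping; the only delicate point, passing from a local to a true martingale, is painless here precisely because the dominating bound is a deterministic constant (thanks to boundedness of $S$), so no uniform-integrability or localization argument is needed beyond invoking that a bounded local martingale is a martingale.
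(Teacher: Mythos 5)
Your proof is correct and takes essentially the same route as the paper's: boundedness of $S$ yields the deterministic bound $|h_{\mathbf{w}}(x)|\le|\mathbf{w}|_{R(S)}$ for $x\in S$, which makes each $N^{h_{\c(s)}}$ a bounded local martingale (hence a true martingale, giving condition~\ref{moment formula ii}) and gives condition~\ref{moment formula iii} via the same double-integral estimate $\int_0^T\int_0^T\E[|\mathcal{A}h_{\c(s)}(X_t)|]\,ds\,dt\le T\int_0^T|L(\c(s))|_{R(S)}\,ds$. The paper's proof is merely terser, leaving implicit the deduction of \eqref{eq: ODE pointwise} from Remark~\ref{rem2}\ref{rem2i} and the explicit bounds you spell out.
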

\begin{proof}
By the definition of a holomorphic process, if  $\c(s)\in\mathcal{V}^*$ then the process $(N^{h_{\c(s)}}_t)_{t\in[0,T]}$ is a local martingale, for each $s\in [0,T]$. Since bounded local martingales are martingales, condition~\ref{moment formula ii} is always satisfied. Next, notice that $$\int_0^T\int_0^T\E[|h_{L(\c(s))}(X_t)|]dsdt\leq T\int_0^T |L(\c(s))|_{R(S)}ds<\infty,$$
proving that condition ~\ref{moment formula iii} is satisfied, too.
\end{proof}

\subsection{Applications}\label{sec: examples}
In this section, we discuss some applications of the preceding theory. As a first example, we consider continuous-time Markov chains with a finite-state space. Next, we examine the set of Lévy processes, affine processes, and finally, we present some examples of jump diffusions that are not polynomial for which Theorem~\ref{th: moment formula holomorphic} applies

\subsubsection{Continuous time Markov chains with a finite state space}\label{finspace}

Let $S:=\{x_1,\ldots,x_N\}\subseteq \R^d$ and note that every map $f:S\to \R$ can be seen as the restriction to $S$ of an entire map bounded on $\R$.

Let $X=(X_t)_{t\in [0,T]}$ be  a continuous-time Markov chain with a finite-state space $S$ with $X_0=x_0\in S$ and  generator
        $$\Acal f(x_i)=\sum_{j=1}^N\lambda_{ij}(f(x_j)-f(x_i)),$$
        for some $\lambda_{ij}\geq0$.
    Set $\Vcal:=\{h:S\to \C\}$  and note that since  $1_{\{\cdot =x_i\}}\in \Vcal$ we can fix $\v^i\in \mathcal{S}^*$ such that $h_{\v^i}(x)=1_{\{x=x_i\}}$ for each $i\in\{1,\ldots,N\}$.  
 Observe that 
    for each $k$ and $\ell$,
     $$\Acal h_{\v^k}(x_\ell)=\sum_{j=1}^N \lambda_{\ell j}( 1_{\{k=j\}}- 1_{\{k=\ell \}})
       = \sum_{i=1}^Nh_{\v^i}(x_\ell )\sum_{j=1}^N \lambda_{ij}( 1_{\{k=j\}}- 1_{\{k=i\}})
       =h_{L(\v^k)}(x_\ell ),$$
       where  $L$ denotes the operator given by \eqref{eq:Lgeneralkernel}, which explicitly reads as  $$L(\v^k)=\sum_{i=1}^N \v^i\sum_{j=1}^N\lambda_{ij}(1_{\{k=j\}}-1_{\{k=i\}}).$$
       This 
       shows that $X$ is $\Vcal$-holomorphic.

    We illustrate now how the conditions of Theorem~\ref{th: moment formula holomorphic} can be verified.
        Since for each $h\in\mathcal{V} $ it holds that $h\in\text{span}\{h_{\v^1},\dots,h_{\v^N}\}$,
       the sequence-valued ODE given by \eqref{eq:ODE sequence space holomorphic} can be interpreted as an $N$-dimensional system of linear ODEs, and for $\c(0)=\v^k$ we can explicitly write
       $$\c(t)=\sum_{j=1}^N \v^j \exp(t\widetilde L)_{jk}, \qquad t\in [0,T],$$
        where  $\exp(t\widetilde L)$ denotes the matrix exponential of $t\widetilde L$, where $\widetilde L\in \R^{N\times N}$  is given by $\widetilde L_{ik}=\sum_{j=1}^N\lambda_{ij}(1_{\{k=j\}}-1_{\{k=i\}})$.
       Since conditions~\ref{moment formula ii} and~\ref{moment formula iii} of Theorem~\ref{th: moment formula holomorphic} follow by the finiteness of the state space we can conclude that for each $h\in \Vcal$,
    $$\E[h(X_T)]=h_{\c(T)}(x_0)=(h_{\v^1}(x_0),\ldots,h_{\v^N}(x_0))\exp(T\widetilde L)(h(x_1),\ldots, h(x_N))^\top.$$

       Note that the same approach can be used if the extended generator is mapping the span of a finite number of holomorphic functions to itself, also for non-finite state spaces. This is the case for polynomial processes, where the bases is given e.g.~by monomials. It is also possible to consider an infinite number of basis elements but more conditions need to be verified. A possible approach to guarantee existence of the solution of the ODE is given in Lemma~\ref{lem4new} below.

\subsubsection{Lévy processes}
It is well known that all Lévy processes whose extended generator is well-defined on the space of polynomials are polynomial jump-diffusions (see e.g.~Lemma 1 in \cite{FL:20}). In the next proposition, we prove that they are also $\Vcal$-holomorphic processes for some suitable $\Vcal$ (containing but not being limited to the polynomials) and demonstrate the validity of the holomorphic formula \eqref{eq: holomorphic formula} for a large class of holomorphic functions. For simplicity, we deal with processes with values on (a subset of) $\Rset$. Notice however that most of the analysis could be extended to the more general multidimensional case.

The proof of the next theorem is given in Appendix~\ref{proof: levy processes 1}. Recall the notion of entire function of exponential type from Remark~\ref{remark:nofinitemoments}\ref{entire functions finite order }  and that in this paper we stick to the truncation function $\chi(\xi)=\xi$ 
\begin{theorem}\label{prop: levy processes 1}
    Fix $S\subseteq\Rset$ with and let  $X=(X_t)_{t\in[0,T]}$ be an $S$-valued Lévy process, with characteristics $(b,a,F)$. Assume that $\int_{|y|>1}\exp(\alpha  |y|) F(dy)<\infty$ for some $\alpha>0$.        Set  
\begin{align*}
    \mathcal{V}&\subseteq\{h\in H(\C) \colon h \text{ is of exponential type }\tau<{\alpha}\},\\
       \mathcal{V}^*&:=\{\u\in \SS \colon h_\u=h,  \ h\in \mathcal{V}\}.
\end{align*}
Then,
\begin{enumerate}
\item \label{item: levy1 i}$X$ is an $S$-valued $\mathcal{V}$-holomorphic process. 
    \item \label{item: levy1 ii} For all $h\in \Vcal$, the process $N^h$ given in equation \eqref{eq:localmart} is a true martingale.
    \item \label{item: levy1 iii} For all $\u\in  \mathcal{V}^*$, there exists a $\mathcal{V}^*$-valued solution $(\c(t))_{t\in[0,T]}$ of \eqref{eq: ODE pointwise} with initial condition $\u$ which satisfies condition~\ref{moment formula iii} of Theorem~\ref{th: moment formula holomorphic}. 
 \end{enumerate}
 In particular, for each $\u\in  \mathcal{V}^*$, the holomorphic formula holds true:
    \begin{align*}
    \E[h_\u(X_T)]=h_{\c(T)}(x_0).
\end{align*}
    
\end{theorem}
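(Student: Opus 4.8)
The plan is to treat the L\'evy process as a jump-diffusion with \emph{constant} characteristics: the drift $b$ and diffusion $a$ are constant, the activity rate is $\lambda\equiv1$, and the jump size $j(z,y)=y$ is (trivially) holomorphic in $z$, so that $\underline{\bm j}(y)$ is the sequence whose only nonvanishing entry is the zeroth one, equal to $y$. For the holomorphic property~\ref{item: levy1 i} I would invoke Corollary~\ref{prop:unboundedjs} with the weight function $v(t):=\exp(\alpha t)$ and $G=\infty$ (so that $P^1_G(0)=\C$). The first condition in~\eqref{eq:theorem_unboundejs} reduces to $\int_\R |y|\wedge|y|^2\,F(dy)<\infty$, which holds since $F$ is a L\'evy measure, while the second reduces to $\int_{|y|>1}\sup_{w\in P^1_{\delta_z}(z)}\exp(\alpha|w+y|)\,F(dy)\le \exp(\alpha(|z|+\delta_z))\int_{|y|>1}\exp(\alpha|y|)\,F(dy)<\infty$ by the standing exponential-moment assumption. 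Since every entire function of exponential type $\tau<\alpha$ lies in $H_v(\C)$ (cf.\ Remark~\ref{remark:nofinitemoments}), the prescribed $\mathcal V$ is a linear subset of $H_v(\C)$, so $X$ is $\mathcal V$-holomorphic with $L$ given by~\eqref{eq:OpLu}.

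The engine for the remaining two points is the exponential integrability of the marginals. Writing $\psi$ for the L\'evy exponent, the assumption $\int_{|y|>1}\exp(\alpha|y|)\,F(dy)<\infty$ gives $\E[\exp(\beta X_t)]=\exp(t\psi(\beta))<\infty$ for every real $\beta$ with $|\beta|\le\alpha$; hence $\E[\exp(\beta|X_t|)]<\infty$, uniformly bounded for $t$ in any compact interval. Fix $h\in\mathcal V$ of exponential type $\tau<\alpha$, so that $|h(z)|\le C\exp(\tau|z|)$, and (by the splitting argument flagged below) $\mathcal A h$ is again entire of exponential type $\le\tau$ with $|\mathcal A h(x)|\le C'\exp(\tau|x|)$. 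Choosing $p>1$ with $p\tau<\alpha$ and applying Doob's $L^p$-inequality to the positive martingale $t\mapsto\exp(\tau X_t-t\psi(\tau))$ (and to its reflection at $-\tau$) yields $\E[\sup_{t\le T}\exp(\tau|X_t|)]<\infty$. This bounds $\E[\sup_{t\le T}|N^h_t|]$, upgrading the local martingale $N^h$ to a true one, which is exactly~\ref{item: levy1 ii} and supplies condition~\ref{moment formula ii} of Theorem~\ref{th: moment formula holomorphic}.

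For~\ref{item: levy1 iii} I would exploit the spatial homogeneity of the L\'evy process, $P_th_\u(x)=\E[h_\u(x+X_t)]$ (with $X_0=0$). The bound $|h_\u(z+X_t)|\le C\exp(\tau|z|)\exp(\tau|X_t|)$ and dominated convergence show that $z\mapsto P_th_\u(z)$ is entire of exponential type $\le\tau<\alpha$, hence lies in $\mathcal V$, and that $x\mapsto P_t\mathcal A h_\u(x)$ is continuous; together with the martingale property just established and $\int_0^TP_s|\mathcal Ah_\u|(x)\,ds<\infty$, all four hypotheses of Lemma~\ref{lemma:AP=PA} hold. This produces the $\mathcal V^*$-valued curve $\c(t)$ determined by $h_{\c(t)}=P_th_\u$ and verifies~\eqref{eq: ODE pointwise}, i.e.\ condition~\ref{moment formula i} of Theorem~\ref{th: moment formula holomorphic} in the relaxed form of Remark~\ref{rem2}\ref{rem2i}. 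Finally, by Remark~\ref{rem: lemma AP=PA} condition~\ref{moment formula iii} follows once $\int_0^T\int_0^TP_{t+s}|\mathcal Ah_\u|(x_0)\,ds\,dt<\infty$, and indeed $P_{t+s}|\mathcal Ah_\u|(x_0)\le C\exp(\tau|x_0|)\,\E[\exp(\tau|X_{t+s}|)]$ is uniformly bounded for $t+s\le 2T$. An application of Theorem~\ref{th: moment formula holomorphic} then delivers the holomorphic formula.

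The main obstacle I anticipate is the careful bookkeeping around the strict gap $\tau<\alpha$: every upgrade (local-to-true martingale via Doob, holomorphy and continuity of $P_th_\u$, and the double-integral bound) relies on an exponent $p\tau$ or $\tau+\epsilon$ that still stays below $\alpha$, together with uniform-in-time control of $\E[\exp(\tau|X_t|)]$ on $[0,2T]$. The one genuinely computational ingredient underpinning all of this is the claim that $\mathcal A$ maps exponential type $\le\tau$ into exponential type $\le\tau$: this is seen by splitting the compensated jump integral at $\{|y|\le1\}$, using a second-order Taylor estimate there and the bound $|h(x+y)-h(x)-h'(x)y|\le C\exp(\tau|x|)(\exp(\tau|y|)+1+|y|)$ on $\{|y|>1\}$, whose integral is finite because $\tau<\alpha$ forces $\int_{|y|>1}\exp(\tau|y|)\,F(dy)<\infty$.
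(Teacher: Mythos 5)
Your proposal is correct and follows essentially the same route as the paper's proof: part (i) via Corollary~\ref{prop:unboundedjs} with the weight $v(t)=\exp(\alpha t)$, part (ii) via exponential moment bounds together with Doob's inequality and the stability of $\Vcal$ under $\Acal$ obtained from the split Taylor estimate (the paper packages the moment control as $g$-cyclicality with $g(x)=\exp(\alpha x)+\exp(-\alpha x)$ and invokes Lemma~\ref{lemma: lemma moment formula weights}, whereas you apply Doob to the Wald exponential martingale, a harmless variation), and part (iii) via the spatially homogeneous semigroup and Lemma~\ref{lemma:AP=PA}, closing with the double-integral bound of Remark~\ref{rem: lemma AP=PA}. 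One caution: dominated convergence alone only yields continuity of $z\mapsto\E[h_\u(z+X_t)]$, not holomorphy; to conclude that $P_th_\u$ is entire you need, as the paper does, Fubini together with Goursat's and Morera's theorems (Proposition~\ref{propclass}\ref{class:it4}--\ref{class:it5}), although the domination bound you state is exactly the ingredient that makes that argument go through.
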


\begin{remark}\label{rem: Levy 1}
\begin{enumerate}
\item Set $v_\tau(t):=\exp(\tau t)$ and note that $\Vcal = \bigcup_{\tau<\alpha}H_{v_\tau}(\C)$.
Observe moreover that imposing enough integrability on $F$ the result of Proposition~\ref{prop: levy processes 1} can be extended to  all entire functions of finite (but arbitrarily large) exponential type. In particular, if $\int_{|y|>1}\exp(\alpha  |y|) F(dy)<\infty$ for all $\alpha>0$ then the results of the cited proposition hold for
$$\Vcal\subseteq\{h\in H(\C) \colon h \text{ is of finite exponential type }\}=\bigcup_{\tau>0}H_{v_\tau}(\C).$$
This is in particular the case if $F$ has bounded support.

    \item If $\int_{|y|>1}\exp(  |y|) F(dy)<\infty$, with a slight adaptation, the proof of Proposition~\ref{prop: levy processes 1} can be adapted to other sets $\Vcal$. An example is given by the set
    $$\mathcal{V}:=\{h\in H(\C) \colon h=h_\u \text{ for some }\u \text{ such that }\sup_{n\in \N}|\u_n|<\infty\}.$$ 
    Note that $\Vcal=H_v(\C)$ for $v(t)=\exp(t)$. In this case, the only missing piece is given by checking that $h',h''\in \Vcal$ for each $h\in \Vcal$, which can be deduced from the representation of the derivatives introduced in Section~\ref{sec 214}.
\end{enumerate}
\end{remark}

Next we show that if the Lévy measure $F$ has bounded support, 
the results of Proposition~\ref{prop:unboundedjs} hold for a larger set $\Vcal$.  In particular, instead of defining $\Vcal$ through the growth rate of the complex extension of its elements, one can impose conditions on the growth rate of functions and their derivatives only on $\R$. The proof of the Corollary~\ref{coro: levy1 bounded support} can be found in Appendix~\ref{proofcor}.  In the following, given a holomorphic function $h$, we write $h|_\R$ for its restriction on $\R$ and denote by $|h|_\R|$ its absolute value.

 \begin{corollary}\label{coro: levy1 bounded support}
 Fix $S\subseteq\Rset$ and let  $X=(X_t)_{t\in[0,T]}$ be an $S$-valued Lévy process with characteristics $(b,a,F)$. Assume that $F$ has bounded support and set
 \begin{align*}
    \mathcal{V}&:=\{h\in H(\C)\colon |h|_\R|,|h'|_\R|,|h''|_\R|\leq C\exp(a|\cdot|)\text{ for some }a\in \Rset, C>0 \}\\
    \Vcal^*&:=\{\u\in \SS \colon h_\u=h,  \ h\in \mathcal{V}\}.
\end{align*}
The following conditions hold true.
\begin{enumerate}
\item \label{item: levy2 i}$X$ is an $S$-valued $\Vcal$-holomorphic process.
    \item \label{item: levy2 ii} The process $N^h$ given in equation \eqref{eq:localmart} is a true martingale for each $h\in \Vcal$.
    \item \label{item: levy2 iii} 
    Fix $\u\in \Vcal^*$. Assume that 
       \label{b: levy process 2}  
        $\int_\Rset |h_\u(x)|dx<\infty $ or $ \int_\Rset |h_\u(x)|^2 dx<\infty$ and  
     the map $\hat h_\u:\R\to\C$ given by 
        \begin{equation}\label{eqnfur}
    \hat h_\u(u):=\int_\R h_\u(x)e^{-iux}dx
    \end{equation}
 satisfies
\begin{align}\label{eq:condition Fourier}
        \int_\Rset \exp(|ux|)|\hat{h}_\u(u)|du<\infty,
    \end{align}
    for all $x\in \Rset$.  Then there exists a $\mathcal{V}^*$-valued solution $(\c(t))_{t\in[0,T]}$ of \eqref{eq: ODE pointwise} with initial condition $\u$ which satisfies condition~\ref{moment formula ii} and~\ref{moment formula iii} of Theorem~\ref{th: moment formula holomorphic}. In particular, the holomorphic formula holds true:
    \begin{align*}
    \E[h_\u(X_T)]=h_{\c(T)}(x_0).
\end{align*}
\end{enumerate}
    \end{corollary}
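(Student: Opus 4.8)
The plan is to treat the three assertions in turn, exploiting that for a L\'evy process the jump kernel is $K(x,d\xi)=F(d\xi)$, so in the notation of Definition~\ref{def:kernelholojs} one has $\lambda\equiv 1$ and $j(z,y)=y$, a jump size that is \emph{constant} in $z$ and, since $F$ has bounded support, uniformly bounded in $y$. For \ref{item: levy2 i} I would simply observe that this is the locally uniformly bounded jump size situation of Remark~\ref{remark:nofinitemoments}\ref{bounded jump size}: condition~\eqref{eqn_bdd} holds because $j(z,y)=y$ with $y$ ranging in $\mathrm{supp}(F)$, and $\int(\|y\|\wedge\|y\|^2)F(dy)<\infty$ is the defining L\'evy property. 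Hence Corollary~\ref{prop:unboundedjs} applies with $\Vcal=H(\C)$, giving that $X$ is $H(\C)$-holomorphic with $L$ of the form \eqref{eq:L_1dim} specialised to $\b=(b,0,\dots)$, $\a=(a,0,\dots)$, $\la=(1,0,\dots)$ and $\underline{\bm{j}}(y)=(y,0,\dots)$. Since the set $\Vcal$ in the statement is a linear subspace of $H(\C)\subseteq\Ocal(S)$, $X$ is a fortiori $\Vcal$-holomorphic.

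For \ref{item: levy2 ii} the point is that bounded support of $F$ forces $X_t$ to possess \emph{all} exponential moments. Writing $\eta$ for the Laplace exponent, $\E[e^{uX_t}]=e^{uX_0}e^{t\eta(u)}$ with $\eta(u)$ finite for every $u\in\R$, so $M^{(u)}_t:=e^{uX_t-t\eta(u)}$ is a positive martingale lying in every $L^p$; Doob's maximal inequality then yields $\E[\sup_{t\le T}e^{uX_t}]<\infty$, and combining $u=a$ with $u=-a$ gives $\E[\sup_{t\le T}e^{a|X_t|}]<\infty$ for all $a>0$. Since $|h|,|h'|,|h''|\le Ce^{a|\cdot|}$ on $\R$ and the jumps are bounded, both $|h(X_t)|$ and $|\Acal h(X_t)|$ are dominated by a constant multiple of $e^{a'|X_t|}$, so $\E[\sup_{t\le T}|N^h_t|]<\infty$. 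The local martingale $N^h$ from \ref{item: levy2 i} is therefore uniformly integrable, hence a true martingale.

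For \ref{item: levy2 iii} I would construct the dual solution explicitly by Fourier inversion, using that exponentials are eigenfunctions of the L\'evy generator, $\Acal e^{iu\cdot}(x)=\psi(iu)e^{iux}$ with $\psi(iu)=ibu-\tfrac12 au^2+\int(e^{iuy}-1-iuy)F(dy)$. Defining
\[
h_{\c(t)}(z):=\frac{1}{2\pi}\int_\R \hat h_\u(u)\,e^{t\psi(iu)}\,e^{iuz}\,du,
\]
I claim $\c(t)\in\Vcal^*$: since $\Re\psi(iu)=-\tfrac{a}{2}u^2+\int(\cos(uy)-1)F(dy)\le 0$ one has $|e^{t\psi(iu)}|\le 1$, and $|e^{iuz}|\le e^{|u|\,|\Im z|}$, so assumption \eqref{eq:condition Fourier} makes the integral absolutely convergent for every $z\in\C$ and, by differentiation under the integral sign, entire; on $\R$ it and its first two derivatives are bounded because \eqref{eq:condition Fourier} forces $\int(1+u^2)|\hat h_\u(u)|\,du<\infty$. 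At $t=0$ Fourier inversion gives $h_{\c(0)}=h_\u$. Differentiating in $t$ and interchanging $\Acal$ with the $du$-integral yields $\partial_t h_{\c(t)}(x)=\Acal h_{\c(t)}(x)=h_{L(\c(t))}(x)$, which is exactly \eqref{eq: ODE pointwise}, the form of condition~\ref{moment formula i} admitted by Remark~\ref{rem2}\ref{rem2i}. Condition~\ref{moment formula ii} holds since each $h_{\c(s)}\in\Vcal$ and \ref{item: levy2 ii} applies, while condition~\ref{moment formula iii} follows because $|\Acal h_{\c(s)}(x)|\le\frac1{2\pi}\int|\psi(iu)|\,|\hat h_\u(u)|\,du$ is bounded uniformly in $s,x$. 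Theorem~\ref{th: moment formula holomorphic} then gives the holomorphic formula.

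The main obstacle is in \ref{item: levy2 iii}: one must rigorously justify that the Fourier-type integral defines an entire function lying in $\Vcal$ and, above all, that the extended generator may be moved inside the $du$-integral. This is a double integral over $E$ and $\R$ whose control rests precisely on the bounded support of $F$ (which keeps $\Acal e^{iu\cdot}$ polynomially bounded in $u$, via $|\psi(iu)|\le C(1+u^2)$) paired with the rapid decay of $\hat h_\u$ encoded in \eqref{eq:condition Fourier}. A secondary technical point is the maximal exponential estimate in \ref{item: levy2 ii}, which is exactly where bounded support is used to guarantee finiteness of $\eta(u)$ for all real $u$.
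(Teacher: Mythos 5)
Your proposal is correct, and for parts \ref{item: levy2 i} and \ref{item: levy2 ii} it essentially matches the paper: \ref{item: levy2 i} is exactly the bounded-jump-size case of Remark~\ref{remark:nofinitemoments}\ref{bounded jump size}/Corollary~\ref{prop:unboundedjs}, and for \ref{item: levy2 ii} your route (all exponential moments via bounded support of $F$, Doob's maximal inequality on the exponential martingales $e^{uX_t-t\eta(u)}$ to get $\E[\sup_{t\le T}e^{a|X_t|}]<\infty$, then domination of $N^h$) is a minor variant of the paper's, which instead invokes the $g$-cyclical property with $g(x)=e^{\alpha x}+e^{-\alpha x}$ and the $L^p$/Doob argument of Lemma~\ref{lemma: lemma moment formula weights}; both hinge on the same exponential integrability and both show $\Acal(\Vcal)$ inherits the $Ce^{a|\cdot|}$ bound on $\R$ from the Taylor estimate with bounded jumps. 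The genuine difference is in \ref{item: levy2 iii}. The paper also builds the dual path from $P_th_\u(x)=\int_\R\E[e^{iuX_t}]e^{iux}\hat h_\u(u)\,du$ (Fourier inversion plus Fubini, then Morera and the Leibniz rule to get $P_th_\u\in\Vcal$), but it verifies \eqref{eq: ODE pointwise} \emph{abstractly}, by checking the four hypotheses of Lemma~\ref{lemma:AP=PA} (true martingality of $N^{h_\u}$, integrability of $P_s|\Acal h_\u|$, invariance $P_t\Vcal\subseteq\Vcal$, continuity of $P_t\Acal h_\u$), whose conclusion $P_t\Acal=\Acal P_t$ rests on uniqueness of the special semimartingale decomposition. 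You instead write the same object in closed form, $h_{\c(t)}(z)=\tfrac1{2\pi}\int_\R\hat h_\u(u)e^{t\psi(iu)}e^{iuz}\,du$, and verify the ODE \emph{directly}: differentiate in $t$ under the integral and use that $e^{iu\cdot}$ is an eigenfunction of $\Acal$, the interchange of $\Acal$ with the $du$-integral being licensed by $|e^{iu(x+\xi)}-e^{iux}-iu\xi e^{iux}|\le u^2\xi^2/2$, $|\psi(iu)|\le C(1+u^2)$ (bounded support), $|e^{t\psi(iu)}|\le 1$ (since $a\ge 0$), and $\int(1+u^2)|\hat h_\u(u)|\,du<\infty$ (implied by \eqref{eq:condition Fourier}). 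What your route buys is explicitness and self-containedness — you never need Lemma~\ref{lemma:AP=PA}, and you obtain a closed-form dual $\c(t)$ in terms of $\hat h_\u$ and the characteristic exponent, which also makes conditions~\ref{moment formula ii} and~\ref{moment formula iii} of Theorem~\ref{th: moment formula holomorphic} immediate from the uniform bound $\sup_{s,x}|\Acal h_{\c(s)}(x)|\le\tfrac1{2\pi}\int|\psi(iu)||\hat h_\u(u)|\,du$. What it costs is generality: the eigenfunction argument is tied to constant (Lévy) coefficients, whereas the paper's lemma-based mechanism is the one reused verbatim for affine processes in Proposition~\ref{prop: affine}. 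Two cosmetic points to align with the paper: state explicitly that $h_{\c(0)}=h_\u$ everywhere on $\C$ (a.e.\ inversion, continuity, then the identity theorem for the entire extensions), and note that with the paper's normalization of \eqref{eqnfur} and Proposition~\ref{propfur} the factor $\tfrac1{2\pi}$ should be dropped.
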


     \begin{remark}\phantomsection\label{remarkcarino}
    \begin{enumerate}
    \item The map defined in \eqref{eqnfur} represents the Fourier transform of $h_\u$.

        \item\label{remarkcarinoii} Notice that contrary to the setting of Proposition \eqref{prop: levy processes 1}, functions of the form $h_n(z):=\exp(-z^{2n}),$  are included in the set $\mathcal{V}$ introduced in Corollary~\ref{coro: levy1 bounded support}. Moreover, by equation~(60) in \cite{B:14} (see also Exercise 5 in Chapter 5 in \cite{SS:10}) we know that the tails of  $\hat h_n$ decay at least as $u^{(1-n)/(2n-1)}\exp(-Cu^{2n/(2n-1)})$, for some $C>0$, implying that
          \eqref{eq:condition Fourier} is satisfied too and we can conclude that
        \begin{align*}
            \E[\exp(-X_T^{2n})]=h_{\c(T)}(x_0),
        \end{align*}
        where $(\c(t))_{t\in[0,T]}$ 
 is a $\mathcal{V}^*$-valued solution of \eqref{eq: ODE pointwise} with initial condition $\u$ given as $h_\u(x)=\exp(-x^{2n})$, $x\in \Rset$.  Note then that
  by standard properties of the Fourier transform this class can be further extended. Using that for $a>0$ it holds
 $\widehat {h_\u(a\cdot)}(u)=\frac 1 {a}\hat h_\u(u/a)$ we can indeed include functions of the form $\exp(-ax^{2n})$. Moreover, observe that
 for $|u|>2R$, either $|s|>R$ or $|u-s|>R$ and hence 
 \begin{align*}
     \widehat{h_\u h_\v}(u)&=\int_\R \hat h_\u(u-s)\hat h_\v(s) ds\\
 &\leq \int_{|s|>R} \hat h_\u(u-s)\hat h_\v(s) ds+\int_{|u-s|>R} \hat h_\u(u-s)\hat h_\v(s) ds\\
 &\leq C\sup_{|s|>R}(|\hat h_\u(s)|+|\hat h_\v(s)|),
 \end{align*}
 showing that 
 Corollary ~\ref{coro: levy1 bounded support} also applies to functions of the form $h(z):=\exp(\sum_{k=0}^n a_{2k}z^{2k})$, for $a_{2k}\in \Rset_-$, $n\in \Nset$.
            \item If furthermore the characteristics of the Lévy process $X$ satisfies $\int_{|\xi|\leq1}|\xi|F(d\xi)<\infty$ and $a=0$, Corollary~\ref{coro: levy1 bounded support} holds true by considering the larger set 
            $$ \mathcal{V}:=\{h\in H(\C)\colon |h|_\R|,|h'|_\R||\leq C\exp(a|\cdot|)\text{ for some }a\in \Rset, C>0 \}.$$
    \end{enumerate}
    
\end{remark}

\subsubsection{Affine processes}

It is well known that under some integrability conditions, affine processes are polynomial jump-diffusions (see Corollary 3.3 in \cite{FL:20}). Here, we go further and show that affine processes are holomorphic processes for a larger class of holomorphic functions than polynomials and show the validity of the holomorphic formula \eqref{eq: holomorphic formula}. Notably, the literature encompasses various formulations of affine processes, reflecting subtle differences in their definitions. In this context, we will always consider affine processes as specified in Definition 3.1 in \cite{FL:20}. This is a relaxed definition compared to the definition of an affine process in \cite{DFS:03}, because it is directly given in terms of the point-wise action of the extended generator on exponential-affine functions. The proof of Proposition~\ref{prop: affine} can be found in Appendix~\ref{proof: prop affine}.

\begin{proposition}\label{prop: affine}
      Let $X=(X_t)_{t\in[0,T]}$ be an affine process on $S\subseteq \Rset$.  Suppose that for each $x\in S$, $K(x,d\xi)=\nu_0(d\xi)+x\nu_1(d\xi)$ for some signed measure $\nu_0,\nu_1$ such that both $|\nu_0|$ and $|\nu_1|$ have bounded support.       Set
      \begin{align*}
    \Vcal&:=\{h\in H(\C)\colon h,h',h''\text{ are bounded on }\R\},\\
    \Vcal^*&:=\{\u\in \SS \colon h_\u=h,  \ h\in \mathcal{V}\}.
\end{align*}
The following conditions hold true.
\begin{enumerate}
\item \label{itemaffini i}$X$ is an $S$-valued $\Vcal$-holomorphic process.
    \item \label{itemaffini ii}The process $N^h$ given in equation \eqref{eq:localmart} is a true martingale for each $h\in \Vcal$.
    \item \label{itemaffini iii}
    Fix $\u\in \Vcal^*$. Assume that $h_\u(x)=\int_{-\tau}^\tau \exp((\varepsilon+iu)x)g(u) du$ for some $g:[-\tau,\tau]\to \R$ with $\int_{-\tau}^\tau |g(u)|du<\infty$, $\varepsilon,\tau\in[0,\infty)$ and 
 $$\E[\exp((\varepsilon+iu)X_t)|X_0=x]=\exp(\phi(t,\varepsilon+iu)+\psi(t,\varepsilon+iu)x),$$
 for all $u\in [-\tau,\tau]$ and $t\in [0,T]$. Then there exists a $\mathcal{V}^*$-valued solution $(\c(t))_{t\in[0,T]}$ of \eqref{eq: ODE pointwise} with initial condition $\u$ which satisfies condition~\ref{moment formula ii} and~\ref{moment formula iii} of Theorem~\ref{th: moment formula holomorphic}. 
 
 In particular, the holomorphic formula holds true:
    \begin{align*}
    \E[h_\u(X_T)]=h_{\c(T)}(x_0).
\end{align*}

\end{enumerate}
\end{proposition}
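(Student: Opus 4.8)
The plan is to establish the three claims exploiting the affine form of the characteristics. Writing $b(x)=b_0+b_1x$, $a(x)=a_0+a_1x$ and $K(x,d\xi)=\nu_0(d\xi)+x\nu_1(d\xi)$, with $|\nu_0|,|\nu_1|$ supported in a bounded interval $[-M,M]$, the generator reads
\[
\Acal h(x)=h'(x)(b_0+b_1x)+\tfrac12(a_0+a_1x)h''(x)+\int_{\R}\big(h(x+\xi)-h(x)-h'(x)\xi\big)\big(\nu_0(d\xi)+x\nu_1(d\xi)\big).
\]
For claim \ref{itemaffini i}, fix $h\in\Vcal$. Its restriction to $\R$ is a bounded $C^2$ map, so $h\in\Dcal(\Acal)$: the Taylor bound $|h(x+\xi)-h(x)-h'(x)\xi|\le\tfrac12\|h''\|_{L^\infty(\R)}\xi^2$ is integrable against $|\nu_0|+|x||\nu_1|$ by the bounded support, and $N^h$ is a local martingale directly from the definition of a jump-diffusion (as $N^h$ depends only on $h|_\R$). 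It remains to prove $\Acal h\in\Ocal(S)$. I would extend $\Acal h$ to complex argument $z$: the first two summands are entire, and for the jump part I would apply the Morera-type Lemma~\ref{prop:Morerageneral}, using that for $z$ in a compact set $P\subset\C$ and $\xi\in[-M,M]$ one has $|h(z+\xi)-h(z)-h'(z)\xi|\le\tfrac12\xi^2\sup_{w\in \overline P+[-M,M]}|h''(w)|$, so that $\tfrac12 C\xi^2$ is an integrable dominating function. Hence $\Acal h$ extends to an entire function and lies in $\Ocal(S)$, proving that $X$ is $\Vcal$-holomorphic.

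For claim \ref{itemaffini ii}, since $h$ is bounded on $\R$ and $X_t\in S\subseteq\R$ we have $|h(X_t)|\le\|h\|_{L^\infty(\R)}$, while the affine structure yields the linear growth bound $|\Acal h(x)|\le C(1+|x|)$ on $S$. Because affine processes with bounded-support jumps are polynomial (see \cite{FL:20}), they admit finite moments of every order with $\sup_{t\le T}\E[|X_t|]<\infty$, so $\E[\int_0^T|\Acal h(X_s)|\,ds]<\infty$. Consequently $\sup_{t\le T}|N^h_t|\le 2\|h\|_{L^\infty(\R)}+\int_0^T|\Acal h(X_s)|\,ds$ is dominated by an integrable random variable, and the local martingale $N^h$ is a true (uniformly integrable) martingale.

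For claim \ref{itemaffini iii} the idea is to identify the dual curve with the semigroup applied to $h_\u$. Setting $w_u:=\varepsilon+iu$, Fubini (justified by the finite exponential moment) and the affine transform formula give
\[
P_th_\u(x)=\int_{-\tau}^\tau\E[\exp(w_uX_t)\mid X_0=x]\,g(u)\,du=\int_{-\tau}^\tau\exp\big(\phi(t,w_u)+\psi(t,w_u)x\big)\,g(u)\,du.
\]
The crux is to verify $P_th_\u\in\Vcal$. Entirety of its complex extension $z\mapsto\int_{-\tau}^\tau\exp(\phi(t,w_u)+\psi(t,w_u)z)g(u)\,du$ follows from Lemma~\ref{prop:Morerageneral}, since over the compact frequency range $[-\tau,\tau]$ the maps $u\mapsto\phi(t,w_u),\psi(t,w_u)$ are continuous, hence bounded, and $g\in L^1$, supplying a local dominating function; the bounds on $P_th_\u$ and its first two derivatives on $\R$ follow by differentiating under the integral (producing factors $\psi(t,w_u)^k$) and estimating $|\exp(\phi+\psi x)|$ via the control of $\mathrm{Re}\,\psi(t,w_u)$. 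With $P_th_\u\in\Vcal$ in hand, I would check the four hypotheses of Lemma~\ref{lemma:AP=PA}: the true-martingale property is claim \ref{itemaffini ii}; the bound $\int_0^TP_s|\Acal h_\u|(x)\,ds<\infty$ follows from the moment estimate of \ref{itemaffini ii}; $P_th_\u\in\Vcal$ was just shown; and continuity of $x\mapsto P_t\Acal h_\u(x)$ follows from the integral representation. This produces a $\Vcal^*$-valued curve $\c(t)$ with $h_{\c(t)}=P_th_\u$ satisfying the pointwise ODE \eqref{eq: ODE pointwise}, i.e.\ condition~\ref{moment formula i} of Theorem~\ref{th: moment formula holomorphic}; conditions~\ref{moment formula ii} and~\ref{moment formula iii} follow once more from the moment bounds, and \eqref{eq: holomorphic formula} yields $\E[h_\u(X_T)]=h_{\c(T)}(x_0)$.

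The main obstacle is precisely the verification $P_th_\u\in\Vcal$, and within it the boundedness of the complex extension together with its first two derivatives along the whole real line: this hinges on controlling $\mathrm{Re}\,\psi(t,w_u)$ uniformly in $u\in[-\tau,\tau]$ and $t\in[0,T]$ through the analyticity of the affine transform in the frequency variable and its compatibility with the geometry of the state space $S$. All remaining steps are routine applications of Fubini, dominated convergence, and the already-established Morera-type lemma.
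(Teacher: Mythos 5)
Your parts \ref{itemaffini i} and \ref{itemaffini ii} are correct and essentially reproduce the paper's own arguments: \ref{itemaffini i} is the locally-bounded-jump-size mechanism (the paper invokes Remark~\ref{remark:nofinitemoments}\ref{bounded jump size}; your direct combination of the Taylor-remainder bound with Lemma~\ref{prop:Morerageneral}, applied to the Jordan decompositions of $\nu_0,\nu_1$, is the same idea), and \ref{itemaffini ii} is the same linear-growth bound $|\Acal h(x)|\le C(1+|x|)$ from the affine characterization together with moment control from the polynomial property.

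The genuine gap is in part \ref{itemaffini iii}. You set up the same strategy as the paper (Fubini plus the affine transform formula giving $P_th_\u(x)=\int_{-\tau}^\tau\exp(\phi(t,\varepsilon+iu)+\psi(t,\varepsilon+iu)x)g(u)\,du$, then Lemma~\ref{lemma:AP=PA} and Theorem~\ref{th: moment formula holomorphic}), but you leave unproved precisely the hypothesis $P_th_\u\in\Vcal$ of Lemma~\ref{lemma:AP=PA}, which you yourself call ``the main obstacle''; a proposal that defers its pivotal step is not a proof. Worse, the route you indicate for closing it cannot work as stated: you propose to bound $P_th_\u$ and its first two derivatives on $\R$ by estimating $|\exp(\phi+\psi x)|$ through control of $\mathrm{Re}\,\psi(t,\varepsilon+iu)$, but $|\exp(\phi+\psi x)|=\exp(\mathrm{Re}\,\phi+x\,\mathrm{Re}\,\psi)$ is unbounded over $x\in\R$ whenever $\mathrm{Re}\,\psi\neq0$, which your assumptions allow (already for Brownian motion with $\varepsilon>0$ one has $\mathrm{Re}\,\psi=\varepsilon>0$ for every $u$, yet $P_th_\u(x)=\E[h_\u(x+X_t)]$ \emph{is} bounded on $\R$). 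Since $\Vcal$ demands bounds on the whole real line and, by the identity theorem, the entire extension off $S$ is unique, any bound obtained by pulling the modulus inside the $du$-integral is doomed; boundedness must come from cancellation in $u$, for which you give no argument. The paper never attempts your estimate: its proof establishes only that $P_th_\u$ is entire (extend the integrand to $z\in\C$, note its modulus is bounded on compacts, then apply Fubini, Goursat and Morera), and it verifies the remaining hypotheses of Lemma~\ref{lemma:AP=PA} by a mechanism absent from your proposal, namely the Feller property of affine processes (Theorem 2.7 in \cite{DFS:03}): continuity of $x\mapsto P_t\Acal h_\u(x)$ is deduced from the truncations $\E[\Acal h_\u(X_t)\wedge N\,|\,X_0=x]$, made uniform on compacts via a Cauchy--Schwarz/Markov estimate and the polynomial property, while the integrability conditions $\int_0^TP_s|\Acal h_\u|(x)\,ds<\infty$ and $\int_0^T\int_0^TP_{t+s}|\Acal h_\u|(x)\,ds\,dt<\infty$ again follow from the polynomial property, the conclusion coming from Remark~\ref{rem: lemma AP=PA}. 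Your one-line substitutes (``continuity \dots follows from the integral representation'') do not fill these steps.
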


\begin{remark}
\begin{enumerate}
\item Observe that by the Paley--Wiener theorem (see Proposition~\ref{propE3}\ref{exp4}) if $h$ is of exponential type  and 
    $$\int_\R |h(x)|^2\exp(-2\varepsilon x)dx<\infty,$$ then
    $$h(z)=\exp(\varepsilon z)\int_{-\tau}^\tau \exp(iuz)g(u) du,$$
    for some $g\in L^2(-\tau,\tau)$, $\tau>0$ and $z\in \C$.

    Other classes of functions for which the proof of Theorem~\ref{th: moment formula holomorphic} works  is given by polynomials and the Fourier basis. In this case we indeed already know from the classical theory that the semigroup maps such functions to entire functions.
\item    As mentioned at the beginning of the section, for simplicity here we are dealing with processes with values on (a subset of) $\Rset$.
    Notice, however, that the above analysis could be extended to affine processes on more general state spaces. A first example in this direction concerns affine processes on the canonical state space $\Rset^m_+\times \Rset^n$, for some $m,n\in \N_0$, as introduced in \cite{DFS:03}. 
    \end{enumerate}
\end{remark}

\subsubsection{Beyond polynomial processes}

So far we have considered instances of polynomial processes, namely L\'evy and affine processes,  as examples of holomorphic ones, and extended the moment formula for polynomials to the holomorphic formula for classes of holomorphic functions. In this section, we go beyond polynomial processes. In particular,
we
exploit Lemma~5.11 in \cite{CST:23} to construct holomorphic jump-diffusions taking values on $S:=[0,1]$,  for which the conditions of Theorem \ref{th: moment formula holomorphic} are satisfied. Let $\Acal:C^2(\R;\C)\to M(\R;\C)$ be the extended generator of a $[0,1]$ valued $\mathcal{V}$-holomorphic process $X$, for $\mathcal{V}:=\Ocal(S)$. Let $(b,a,K)$ be the corresponding characteristics with respect to the truncation function $\chi(\xi)=\xi$.

We report here an adaptation of the statement of the lemma for the reader's convenience. In order to simplify the notation, set
$$ \mathcal B_C:=\{{\bf \mu}=(\mu_k)_{k \in \N_0}\colon {\bf \mu}_k\geq0,\ \sum_{k=0}^\infty{\bf \mu}_k\leq C\},\qquad \mathcal B:=\bigcup_{C>0}\mathcal B_C.$$
\begin{lemma}\label{lem4new}
Fix $T >0$,  $\lambda=(\lambda_k)_{k \in \N_0}$, and  $\mu\in \mathcal B$ such that
$$(\lambda_0\mu_0,\lambda_1\mu_1,\lambda_2\mu_2,\ldots)\in \SS.$$
Assume that
$$\Acal (\lambda_k\frac{(\cdot)^k}{k!})(x)=\sum_{j=0}^\infty \beta_{kj}(\lambda_j\frac{x^j}{j!}-\lambda_k\frac{x^k}{k!})+\beta_k\lambda_k\frac {x^k}{k!}$$
for some $\beta_{kj}\in \R_+$ and $\beta_k\in \R$ such that $\sup_{k\geq 0}\beta_k^+<\infty$ and $\lim_{j\to\infty}\beta_{kj}=0$ for each $k\in \N_0$. 
If for all $x\in (-\e,1+\e)$ the sequence 
\begin{equation}\label{eqn6Z}
\Big(\lambda_k\frac{x^k}{k!},\sum_{j=0}^\infty \beta_{kj}(\lambda_j\frac{x^j}{j!}-\lambda_k\frac{x^k}{k!})\Big)_{k\in\N_0}
\end{equation}
lies in the bounded pointwise closure of 
\begin{equation*}
\Big\{\Big(\a_k,\sum_{j=0}^\infty \beta_{kj}(\a_j-\a_k)\Big)_{k\in \N_0}\colon \a=(\a_k)_{k=0}^N,\quad N\in \N\Big\},
\end{equation*}
then the linear ODE given by \eqref{eq:ODE sequence space holomorphic} admits a solution $({\bf c}(t))_{t\in[0,T]}$ satisfying \eqref{eq: ODE pointwise} for each $x\in S$ of the form  ${\bf c}_k(t)=\lambda_k\mu_k(t)$ for some $\mu_k(t)\geq0$. If $\beta_k\leq 0$ for each $k$, then  $\sum_{k=0}^\infty\mu_k(t)\leq \sum_{k=0}^\infty\mu_k$.
\end{lemma}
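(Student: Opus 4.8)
The plan is to read the sequence-valued ODE \eqref{eq:ODE sequence space holomorphic} as the Kolmogorov forward (master) equation of a continuous-time Markov chain on $\N_0$, to build its solution by finite truncation, and to pass to the limit using the bounded-pointwise-closure hypothesis, which is exactly the assumption under which the abstract existence result Lemma~5.11 in \cite{CST:23} applies. First I would substitute the ansatz $\c_k(t)=\lambda_k\mu_k(t)$ and compute $L$ explicitly. Writing $h_{\c(t)}(x)=\sum_k\mu_k(t)\,\lambda_k\frac{x^k}{k!}$ and applying $\Acal$ term by term via the assumed form of $\Acal(\lambda_k\frac{(\cdot)^k}{k!})$, the coefficient of $\lambda_j\frac{x^j}{j!}$ in $\Acal h_{\c(t)}$ is $\sum_k\beta_{kj}\mu_k(t)-(\sum_i\beta_{ji})\mu_j(t)+\beta_j\mu_j(t)$. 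Hence the pointwise identity \eqref{eq: ODE pointwise} under the ansatz is equivalent to
\begin{equation*}
\dot\mu_j(t)=\sum_{k}\beta_{kj}\mu_k(t)-\Big(\sum_i\beta_{ji}\Big)\mu_j(t)+\beta_j\mu_j(t),\qquad \mu_j(0)=\mu_j,
\end{equation*}
the forward equation for the (sub-)distribution of a chain on $\N_0$ with jump rates $\beta_{kj}$ from $k$ to $j$ and a state-dependent mass modulation $\beta_j$. The hypotheses $\beta_{kj}\geq0$, $\lim_{j\to\infty}\beta_{kj}=0$ and $\sup_k\beta_k^+<\infty$ are precisely the rate conditions under which such a chain is well posed.

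Next I would construct the solution by finite-dimensional truncation. For each $N\in\N$, restricting the index set to $\{0,\dots,N\}$ (truncating both the in-rates $\sum_k\beta_{kj}$ and the out-rate $\sum_i\beta_{ji}$) yields a finite linear system with nonnegative off-diagonal coefficients; by standard ODE theory it has a unique global solution $\mu^N(t)$, and nonnegativity of the data together with nonnegative off-diagonal rates forces $\mu_j^N(t)\geq0$. Summing over $j\in\{0,\dots,N\}$, the two jump contributions cancel exactly after relabeling, leaving $\frac{d}{dt}\sum_{j=0}^N\mu_j^N(t)=\sum_{j=0}^N\beta_j\mu_j^N(t)\leq\sup_k\beta_k^+\sum_{j=0}^N\mu_j^N(t)$, so Gronwall gives a total-mass bound uniform in $N$ and in $t\in[0,T]$. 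This uniform bound supplies the compactness needed to extract, via a diagonal subsequence, a pointwise limit $\mu_j(t):=\lim_N\mu_j^N(t)\geq0$.

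The delicate step, and where the bounded-pointwise-closure hypothesis is essential, is to verify that this limit solves the \emph{full} infinite system and produces the identity \eqref{eq: ODE pointwise} on $S=[0,1]$ rather than a merely truncated version. This is exactly the content of Lemma~5.11 in \cite{CST:23}: the assumption that the pair in \eqref{eqn6Z} lies in the bounded pointwise closure of the graph of the finite-rank generators guarantees that the action of $\Acal$ on $h_{\c(t)}$ is the bounded pointwise limit of its truncations, so that the integrated ODE and the power-series representation survive the passage to the limit. The requirement $(\lambda_0\mu_0,\lambda_1\mu_1,\ldots)\in\SS$, combined with evaluating \eqref{eqn6Z} on the real neighborhood $(-\e,1+\e)$, provides the convergence radius strictly larger than $R(S)=1$ that keeps the limiting sequence in $\SS=\Vcal^*$. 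I would therefore recast the present hypotheses into the form required by that lemma and apply it to obtain a $\Vcal^*$-valued solution $\c(t)=(\lambda_k\mu_k(t))_k$ of \eqref{eq:ODE sequence space holomorphic} satisfying \eqref{eq: ODE pointwise}.

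Finally, the mass estimate under $\beta_k\leq0$ follows by summing the limiting master equation over $j$: the jump contributions cancel after interchanging the order of summation (justified by the uniform bound), leaving $\frac{d}{dt}\sum_j\mu_j(t)=\sum_j\beta_j\mu_j(t)\leq0$, whence $\sum_j\mu_j(t)\leq\sum_j\mu_j$ for all $t\in[0,T]$. The main obstacle throughout is the interchange of the $N\to\infty$ limit with the infinite summations in the passage from the finite truncations to the full system, which is exactly what the bounded-pointwise-closure condition is designed to control.
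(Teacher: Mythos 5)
Your overall architecture matches the paper's: the paper also proves this lemma by following the proof of Lemma~5.11 in \cite{CST:23}, which is exactly the finite-truncation/Markov-chain construction you reconstruct (nonnegativity, Gronwall mass bound, diagonal limit), and the final estimate $\sum_k\mu_k(t)\leq\sum_k\mu_k$ under $\beta_k\leq0$ is obtained the same way. The genuine gap lies in the single step where the paper adds content beyond citing that lemma, namely where ``the form of $\Acal$ plays a role''. Both your derivation of the master equation for $\mu_j$ and your verification of \eqref{eq: ODE pointwise} presuppose that the extended generator can be applied term by term to the infinite series,
\begin{equation*}
\Acal h_{\c(t)}=\sum_{k=0}^\infty \mu_k(t)\,\Acal\Big(\lambda_k\frac{(\cdot)^k}{k!}\Big),
\end{equation*}
together with the rearrangement of the resulting double sum over $k$ and $j$. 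You attribute this interchange to the bounded-pointwise-closure hypothesis via Lemma~5.11, but that hypothesis lives entirely on the sequence side: it concerns the graph closure of the rate operator $\a\mapsto\big(\sum_j\beta_{kj}(\a_j-\a_k)\big)_{k\in\N_0}$ acting on finite sequences, and it is what makes the duality/limit argument for the coefficient ODE work. It says nothing about commuting $\Acal$ --- which here contains the jump integral against $K(x,d\xi)$ --- with the sum over $k$; indeed Lemma~5.11 of \cite{CST:23} is proved for \emph{continuous} processes, which is precisely why the present paper has to restate and re-prove an adapted version.

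The missing argument (essentially the entire written content of the paper's proof) is: since the $\mu_k(t)\geq0$ are summable, the partial sums $h_N:=\sum_{k=0}^N\lambda_k\mu_k(t)(\cdot)^k/k!$ converge to $h_{\c(t)}$ uniformly on $S=[0,1]$, and the same holds for their first and second derivatives; hence for each $x\in[0,1]$
\begin{equation*}
\int_{\Rset}\big|h_{\c(t)}(x+\xi)-h_{\c(t)}(x)-h'_{\c(t)}(x)\xi\big|\,K(x,d\xi)
\leq \sup_N\sup_{x\in[0,1]}\big(|h_N(x)|+|h_N'(x)|+|h_N''(x)|\big)\int_{\Rset}|\xi|\wedge|\xi|^2\,K(x,d\xi),
\end{equation*}
which is finite by the standing integrability assumption on jump-diffusion kernels, and the dominated convergence theorem then yields the term-by-term identity above. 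Without this step, \eqref{eq: ODE pointwise} --- whose right-hand side involves $h_{L(\c(s))}(x)=\Acal h_{\c(s)}(x)$ on $S$ --- is not established in the presence of jumps. Your proposal should be completed by this uniform-convergence-plus-dominated-convergence argument; the rest of it is sound and parallels the paper.
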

\begin{proof}
The proof follows the proof of Lemma~5.11. In the last part, where the form of $\Acal$ plays a role, note that since 
$h_N:=\sum_{k=0}^N \mu_k(t) (\cdot)^k/{k!}$
 converges to $\sum_{k=0}^\infty \mu_k(t) (\cdot)^k/{k!}=h_{{\bf c}(t)}$  uniformly on $S$ and the same holds for the corresponding derivatives. 
 Since for each $x\in [0,1]$,
\begin{align*}
\int_{\Rset}h_{\c(t)}&(x+\xi)-h_{\c(t)}(x)-{h'}_{\c(t)}\xi K(x,d\xi)\\
&\leq 
   \sup_N\sup_{x\in [0,1]}(|h_N(x)|+|h_N'(x)|+|h_N''(x)|)  \int_{\Rset}|\xi|\land |\xi|^2 K(x,d\xi)<\infty,
\end{align*}
 an application of the dominated convergence theorem yields that 
   $ \Acal h_{{\bf c}(t)}
=\sum_{k=0}^\infty \mu_k(t)\Acal (\lambda_k\frac{(\cdot)^k}{k!}).
$
\end{proof}

Consider the jump-diffusion on $[0,1]$ whose coefficients $(b,a,K)$ with respect to the truncation function $\chi(\xi)=\xi$ are given by
$$b(x)=0,\qquad
a(x)= x(1-x)(1-x/2),\qquad K(x,\cdot)=\frac{(1-x)(1-x/2)}x1_{\{x\neq 0\}} \delta_{-x}(\cdot),$$ where $\delta_{-x}$ denotes the Dirac measure in $(-x)$. Since its generator $\Acal$ is given by
$$\Acal f(x)
= \frac 1 2 a(x)f''(x)
+\frac{(1-x)(1-x/2)}x\big(f(0)-f(x)+xf'(x)\big),$$
we can see that it is also a $[0,1]$-valued $\Vcal$-holomorphic process for $\Vcal=\Ocal(S)$. Observe also that this is a continuous martingale that can perform a jump to the state 0, where it will then get absorbed. Setting $f_k(x):=(x/2)^k$ we can see that the generator of this process satisfies
$$\Acal f_k(x)
=\frac {(k+2)(k-1)}4(f_{k-1}+2f_{k+1}-3f_k)1_{\{k\geq 2\}},$$
which implies that $\Acal$ is of the form described in Lemma~\ref{lem4new} for  $\lambda_k:=(1/2)^k$, $\beta_{0j}=\beta_{1j}=0$,
$$\beta_{kj}=\frac {(k+2)(k-1)}41_{\{j=k-1\}}+\Big(\frac {(k+2)(k-1)}2\Big)1_{\{j=k+1\}},\qquad k\geq 2,$$
and $\beta_k=0$. Following the examples in Section~6.2 of \cite{CST:23} we can then apply Lemma~\ref{lem4new} to conclude that there exists a solution $(\c(t))_{t\in [0,T]}$ of \eqref{eq:ODE sequence space holomorphic} satisfying \eqref{eq: ODE pointwise} for each initial condition $\u\in \Wcal^*$ where, recalling that $S:=[0,1]$,
$$\Wcal^*:=\{\u\in \S^*\colon\u_k=\mu_k \frac {k!}{2^k}\text{ for some $\mu_k\in \R_+$ such that $\sum_{k=0}^\infty \mu_k<\infty$}\}.$$
Since condition~\ref{moment formula ii} of Theorem~\ref{th: moment formula holomorphic} is always satisfied for bounded state spaces and condition~\ref{moment formula iii} of the same theorem can be verified using that $\Acal f_k(x)\geq0 $ for each $k\in \N_0$ and  $x\in [0,1]$ we can conclude that Theorem~\ref{th: moment formula holomorphic} can be applied yielding 
\begin{align}\label{momcomp}
    \E[h_\u(X_T)]=h_{\c(T)}(x_0).
\end{align}
for each $\u\in \Wcal^*$. This includes in particular
$\u=(1,u,u^2,\ldots)$
corresponding to $h_\u(x)=\exp(ux)$ for each $u\in \R_+$.

An inspection of the proof of Lemma~5.11 gives us also a constructive description of the process $(\c(t))_{t\in[0,T]}$  appearing in \eqref{momcomp}. Assuming for simplicity that $\sum_{i=0}^\infty \u_i2^i/i!=1$ we indeed get that
$$\c(T)_i:= \mathbb P(Z_T=i)i!/2^i,$$ where $(Z_t)_{t\in[0,T]}$ is the $\N_0$-valued process satisfying 
$$\mathbb P(Z_0=i)=\u_i\frac {2^i}{i!}$$
and jumping from state $k$ to state $j$ after an $\exp(\beta_{kj})$-distributed random time.

\section{Affine-holomorphic jump-diffusions}\label{sec: affine holomorphic}
Fix  $\S\subseteq\Rset^d$ and $T>0$.
\begin{definition}\label{def:affine-holomorphic process}
    Let $X=(X_t)_{t\in [0,T]}$ be an $S$-valued jump-diffusion with extended generator $\mathcal{A}$ and fix a  subset $\mathcal{V}\subseteq \mathcal{O}(S)$.  We say that $X$ is an $S$-valued $\mathcal{V}$-\textit{affine-holomorphic process} if there exists an operator $\mathcal{R}:\Vcal\rightarrow\mathcal{O}(S)$ such that for each  $f\in\Vcal$ it holds that $\exp(f)\in\Dcal(\Acal)$, 
    \begin{align*}
        \mathcal{A}\exp(f)=\exp(f)\mathcal{R}(f),
    \end{align*}
    and the process $N^{\exp(f)}$ introduced in equation \eqref{eq:localmart} defines a local martingale.
\end{definition}

Let $X=(X_t)_{t\in [0,T]}$ be a $S$-valued $\mathcal{V}$-affine-holomorphic process, for some subset $\mathcal{V}\subseteq \mathcal{O}(S)$, and let  $\mathcal{V}^*$ as in  \eqref{eq: V^*}.
Notice that by the properties of the functions in $\mathcal{O}(S)$ there always exists a  map $ R: \mathcal{V}^*\rightarrow \mathcal{O}(S)$
such that for all $\u\in\mathcal{V}^*$,$$\mathcal{A}\exp(h_\u)=\exp(h_{\u})R(h_\u)|_S.$$ Observe furthermore that also here such a map might be not unique.

\begin{remark}\label{rem : affine infinite dim}
    Note that every $S$-valued affine jump diffusion in the sense of Definition 2 in \cite{FL:20} is an $S$-valued $\Vcal$-affine-holomorphic process, for $\Vcal=   \{h:\Rset^d \rightarrow \C \ \colon \ h(x):=iu^\top x, \ u\in \Rset^d\}$.  Moreover for an $S$-valued $\mathcal{V}$-affine-holomorphic process, the infinite-dimensional process defined in Equation \eqref{eq: boldX infinite} can be viewed as an $\mathcal{V}^*$-affine process in the extended tensor algebra of $\Rset^d$ in the sense of Definition 3.6 in \cite{CST:23}.
\end{remark}

\subsection{Characteristics of affine-holomorphic jump-diffusions}\label{sec: affine charact}
In this section, we establish some sufficient conditions for a jump-diffusion process to be affine-holomorphic. In alignment with the section on holomorphic processes, we do not elaborate on existence results here. Specifically, considering kernels with holomorphic jump sizes
 we investigate the criteria on its characteristics guaranteeing the affine-holomorphic property.

Recall that for $\u,\v\in\SS$, we denote by $\textbf{exp*}(\u)\in\SS$ some coefficients determining the power series representation of $\exp(h_\u)$ on $S$, namely $h_{\textbf{exp*}(\u)}|_S:=\exp(h_\u)|_S$, by $\u\circ^s\v$ some coefficients determining the power series representation of $h_\u(\cdot +h_{\underline{\v}}(\cdot))$ on $S$, whenever the latter is well defined (see Section~\ref{sec 214}), and that $\mathbf{1}:=(\mathbf{1}_{\alpha})_{\alpha\in \N^d_0}$ denotes the sequence such that $\mathbf{1}_{\alpha}=1$ if $\alpha=(0,\dots,0)$ and $\mathbf{1}_{\alpha}=0$ otherwise. Recall also the following notation introduced in Theorem~\ref{ps to ps}. We have
\begin{equation}\label{eqnDJ}
\begin{aligned}
Jh(z,y)&:=\lambda(z)(h(z+j_\vare(z,y))-h(z)-\nabla h(z)^\top j_\vare(z,y)),\\
    \Dcal(J)&:=\{h\in H(P_G^d(0))\colon Jh(z,\cdot)\in L^1(E,F)\text{ for each }z\in P^d_{R(S)+\vare}(0)\},
\end{aligned}
\end{equation}
for each $h\in H(P_G^d(0))$, $z\in P^d_{R(S)+\vare}(0)$, and $y\in E$.
The proof of the next theorem follows the proof of Theorem~\ref{ps to ps}.
\begin{theorem}\label{theorem: new}
Let $X=(X_t)_{t\in [0,T}$ be an $S$-valued jump-diffusion with characteristics $(b,a,K)$ and extended generator $\Acal$.  Assume that $b_j,a_{ij}\in\Ocal(S)$, $K$ is a kernel with holomorphic jump size, and let $F$ be the corresponding non-negative measure on the measurable space $E$. 
Let $G$ satisfy \eqref{eqnG} and
for each $h\in H(P_G^d(0))$, $z\in P^d_{R(S)+\vare}(0)$, and $y\in E$ set $\Dcal(J)$ and $Jh$ as in \eqref{eqnDJ}.
Set 
 \begin{align*}
     \mathcal{V}&\subseteq\{h\in H(P^d_G(0))\colon e^h\in \Dcal(J)
     \text{ and } P^d_{R(S)+\vare}\ni z\mapsto Je^h(z,\cdot)\in L^1(E,F) \text{ is continuous}\},
\\  \mathcal{V}^*&:=\{\u\in \SS \colon h_\u=h|_{{P}^d_{R(S)}(0)},  \ h\in \mathcal{V}\}.
 \end{align*}
Then $X$ is an $S$-valued $\mathcal{V}$-affine-holomorphic process and for all $\u\in\mathcal{V}^*$, $$\mathcal{A}\exp(h_\u)=\exp(h_{\u})R(h_\u)|_S,$$ where $R:\mathcal{V}^*\rightarrow \S^*$ is given by
\begin{align}\label{eq:R1}
    R(\u)&:= \sum_{|\beta|=1}\u^{(\beta)}\ast \underline{\b}^{\beta} 
    +\sum_{|\beta_1|,|\beta_2|=1}\frac{1}{(\beta_1+\beta_2)!}\underline{\underline{\a}}^{\beta_1+\beta_2}\ast ({\u^{(\beta_1)}}\ast\u^{(\beta_2)}+\u^{(\beta_1+\beta_2)})\\
    &+\la\ast \int_{E}\bm{\exp}^*\big(\u\circ^s\underline\j( y)-\u\big)-\mathbf{1}-\sum_{|\beta|=1}\u^{(\beta)}\ast \underline{\bm{j}}( y)^{\ast\beta}\ F(d y) .\nonumber 
\end{align}
\end{theorem}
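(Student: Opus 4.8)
The plan is to mirror the proof of Theorem~\ref{ps to ps}, applying the extended generator $\Acal$ to $\exp(h_\u)$ rather than to $h_\u$, and then to read off the operator $R$ by factoring out the common factor $\exp(h_\u)$. Throughout I fix $\u\in\Vcal^*$ and write $f:=h_\u$. The membership $e^f\in\Dcal(J)$ built into the definition of $\Vcal$ guarantees that the jump integral converges absolutely, so that $e^f\in\Dcal(\Acal)$ and $\Acal e^f$ is well defined pointwise on $S_\vare$. The new feature compared with Theorem~\ref{ps to ps} is that every increment $h(z+j)-h(z)$ is now replaced by its exponential, which is exactly what produces the quadratic term in the diffusion part and the $\bm{\exp}^*$ in the jump part.

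First I would treat the continuous part. Since $\nabla e^f=e^f\nabla f$ and $\nabla^2 e^f=e^f(\nabla f\,\nabla f^\top+\nabla^2 f)$, the drift and diffusion contributions to $\Acal e^f$ equal
$$e^f\Big(\nabla f^\top b+\tfrac12\text{Tr}\big(a(\nabla f\,\nabla f^\top+\nabla^2 f)\big)\Big).$$
Translating into coefficients via $\partial_i f=h_{\u^{(\epsilon_i)}}$, $b_i=h_{\underline{\b}^i}$, $a_{ij}=h_{\underline{\underline{\a}}^{ij}}$, together with the fact that pointwise products of power series correspond to the convolution $\ast$, yields the first two lines of \eqref{eq:R1}; here the quadratic term $\u^{(\beta_1)}\ast\u^{(\beta_2)}$ appearing alongside $\u^{(\beta_1+\beta_2)}$ is the signature of the exponential, distinguishing this from the purely linear Hessian term of Theorem~\ref{ps to ps}. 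At this step the precise combinatorial factor $1/(\beta_1+\beta_2)!$ in the double sum over \emph{ordered} pairs $(\beta_1,\beta_2)$ must be matched carefully against the symmetric sum $\tfrac12\sum_{i,j}a_{ij}(\partial_i f\,\partial_j f+\partial_i\partial_j f)$.

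Next I would handle the jump part, which by Definition~\ref{def:kernelholojs} equals $\int_E Je^{f}(z,y)\,F(dy)$ with $Jh$ as in \eqref{eqnDJ}. Factoring out $e^{f(z)}$ gives
$$\int_E Je^{f}(z,y)\,F(dy)=e^{f(z)}\,\lambda(z)\int_E\big(e^{f(z+j_\vare(z,y))-f(z)}-1-\nabla f(z)^\top j_\vare(z,y)\big)F(dy).$$
To express the integrand in coefficients I would use the shift-composition from Section~\ref{sec 214}, $f(z+h_{\underline{\bm{j}}(y)}(z))=h_{\u\circ^s\underline{\bm{j}}(y)}(z)$, so that $f(z+j_\vare(z,y))-f(z)=h_{\u\circ^s\underline{\bm{j}}(y)-\u}(z)$, followed by the exponentiation operation $\bm{\exp}^*$, giving $e^{f(z+j_\vare(z,y))-f(z)}=h_{\bm{\exp}^*(\u\circ^s\underline{\bm{j}}(y)-\u)}(z)$. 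Subtracting the compensator $1+\nabla f^\top j_\vare$, whose coefficients are $\mathbf{1}+\sum_{|\beta|=1}\u^{(\beta)}\ast\underline{\bm{j}}(y)^{\ast\beta}$, and convolving with $\la$ before integrating in $y$ produces the last line of \eqref{eq:R1}. That $\circ^s$ and $\bm{\exp}^*$ are admissible operations landing in $\S^*$ is precisely what the radius condition \eqref{eqnG} on $G$ secures, by keeping $z+j_\vare(z,y)$ inside the polydisc on which the relevant series converge.

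To conclude that $R(\u)\in\S^*$, equivalently that $\Acal e^f\in\Ocal(S)$, I would invoke Morera's theorem in the form of Lemma~\ref{prop:Morerageneral} applied to $z\mapsto\int_E Je^{f}(z,y)\,F(dy)$: the hypothesis $Je^{f}(z,\cdot)\in L^1(E,F)$ for each $z$ together with the continuity of $z\mapsto Je^f(z,\cdot)\in L^1(E,F)$ built into the definition of $\Vcal$ permits interchanging the contour integral over triangles with the $F$-integral and verifying holomorphy, exactly as in Theorem~\ref{ps to ps}. Once $\Acal e^f=e^f\,R(h_\u)|_S$ is established with $R(h_\u)=h_{R(\u)}\in\Ocal(S)$, the local martingale property of $N^{e^f}$ follows directly from the defining martingale problem of $X$, so that $X$ is $\Vcal$-affine-holomorphic. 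The main obstacle is the jump term: showing that the two nonlinear sequence operations $\circ^s$ and $\bm{\exp}^*$ commute with the $F$-integral and jointly reproduce $\bm{\exp}^*(\u\circ^s\underline{\bm{j}}(y)-\u)$ as a genuine element of $\S^*$, and that the Morera/Vitali machinery of Theorem~\ref{ps to ps} survives the replacement of the linear increment by its exponential---which is where the continuity-in-$L^1(E,F)$ assumption on $\Vcal$ does the essential work.
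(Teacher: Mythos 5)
Your proposal is correct and follows essentially the same route as the paper: the paper's proof of Theorem~\ref{theorem: new} is exactly ``follow the proof of Theorem~\ref{ps to ps} with $e^h$ in place of $h$,'' i.e., compute $\Acal e^{h_\u}$, factor out $e^{h_\u}$, translate the drift/diffusion and jump parts into the sequence operations $\ast$, $\circ^s$, $\bm{\exp}^*$, and invoke Lemma~\ref{prop:Morerageneral} (Morera) via the $L^1$-continuity hypothesis on $Je^h$, with the local martingale property obtained as in Theorem~\ref{prop: holo generalkernel}. Your explicit computation of $\nabla e^f$, $\nabla^2 e^f$ and of the compensated exponential jump increment matches the identity recorded in the paper's proof of Lemma~\ref{lem:niceR}, so no gap remains.
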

In the next lemma, we show that with a slightly stronger assumption on $h\in \Vcal$ we obtain a nicer representation of the operator $R$ in \eqref{eq:R1}. The proof of the next result is given in Appendix~\ref{proof: theorem new}.
\begin{lemma}\label{lem:niceR}
    If $h_\u\in \Vcal$ satisfies $h_\u\in 
    \Dcal(J)$ and the map $P^d_{R(S)+\vare}\ni z\mapsto Jh(z,\cdot)\in L^1(E,F)$ is continuous, we also get the representation
    \begin{align}\label{eq:R1nice}
    R(\u):= L(\u)+&
\sum_{|\beta_1|,|\beta_2|=1}\frac{1}{(\beta_1+\beta_2)!}\underline{\underline{\a}}^{\beta_1+\beta_2}\ast ({\u^{(\beta_1)}}\ast\u^{(\beta_2)})\\
    +&\la\ast \int_{E}\bm{\exp}^*\big(\u\circ^s\underline\j( y)-\u\big)-\mathbf{1}-\big(\u\circ^s\underline\j( y)-\u\big)\ F(d y) ,\nonumber 
\end{align}
for $L$ as in equation \eqref{eq:OpLu}.
\end{lemma}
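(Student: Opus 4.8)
The plan is to start from the representation \eqref{eq:R1} provided by Theorem~\ref{theorem: new} and to transform it, term by term, into \eqref{eq:R1nice}. The drift and second-order pieces will match after a purely algebraic rearrangement, so that the entire content of the lemma is concentrated in the jump integral, where the strengthened hypothesis $h_\u\in\Dcal(J)$ is exactly what licenses splitting one compensated integral into two individually convergent ones.

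First I would dispose of the non-jump part. The drift term $\sum_{|\beta|=1}\u^{(\beta)}\ast\underline{\b}^{\beta}$ is literally the drift term of $L(\u)$ in \eqref{eq:OpLu}. In the diffusion block of \eqref{eq:R1} I would separate the two summands inside the bracket: the part carrying $\u^{(\beta_1+\beta_2)}$ reassembles, after rewriting the double sum over unit multi-indices as a single sum over $|\beta|=2$ (using the symmetry $\underline{\underline{\a}}^{ij}=\underline{\underline{\a}}^{ji}$ and commutativity of $\ast$), into the second-order term $\sum_{|\beta|=2}\frac{1}{\beta!}\u^{(\beta)}\ast\underline{\underline{\a}}^{\beta}$ of $L(\u)$, while the part carrying $\u^{(\beta_1)}\ast\u^{(\beta_2)}$ is the genuinely quadratic gradient term, which already appears verbatim in \eqref{eq:R1nice}. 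Hence the non-jump part of \eqref{eq:R1} equals $L(\u)$ minus its jump term, plus the quadratic term, exactly as in \eqref{eq:R1nice}.

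The crux is the jump term. Writing $w(y):=\u\circ^s\underline{\bm{j}}(y)-\u$, the integrand in \eqref{eq:R1} is $\bm{\exp}^*(w(y))-\mathbf{1}-\sum_{|\beta|=1}\u^{(\beta)}\ast\underline{\bm{j}}(y)^{\ast\beta}$, whereas the jump contributions of \eqref{eq:R1nice} are $w(y)-\sum_{|\beta|=1}\u^{(\beta)}\ast\underline{\bm{j}}(y)^{\ast\beta}$ (coming from $L(\u)$) and $\bm{\exp}^*(w(y))-\mathbf{1}-w(y)$. I would invoke the pointwise-in-$y$, componentwise-in-$\alpha$ identity in $\S^*$
\begin{align*}
\Big(w(y)-\sum_{|\beta|=1}\u^{(\beta)}\ast\underline{\bm{j}}(y)^{\ast\beta}\Big)+\big(\bm{\exp}^*(w(y))-\mathbf{1}-w(y)\big)=\bm{\exp}^*(w(y))-\mathbf{1}-\sum_{|\beta|=1}\u^{(\beta)}\ast\underline{\bm{j}}(y)^{\ast\beta},
\end{align*}
in which the two copies of $w(y)$ cancel, and then reduce matters to the linearity of the componentwise integral $\la\ast\int_E(\cdot)\,F(dy)$. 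This linearity is legitimate precisely because each summand is separately $F$-integrable: the sequence $\la\ast\big(w(y)-\sum_{|\beta|=1}\u^{(\beta)}\ast\underline{\bm{j}}(y)^{\ast\beta}\big)$ consists of the coefficients of $Jh_\u(z,\cdot)$ (cf.\ \eqref{eqnDJ}), hence lies in $L^1(E,F)$ by the added assumption $h_\u\in\Dcal(J)$, while $\la\ast\big(\bm{\exp}^*(w(y))-\mathbf{1}-\sum_{|\beta|=1}\u^{(\beta)}\ast\underline{\bm{j}}(y)^{\ast\beta}\big)$ consists of the coefficients of $Je^{h_\u}(z,\cdot)$, hence lies in $L^1(E,F)$ because $e^{h_\u}\in\Dcal(J)$ by membership of $h_\u$ in $\Vcal$. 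The third integrand $\bm{\exp}^*(w(y))-\mathbf{1}-w(y)$ is then integrable as the difference of the two, so all three integrals exist and add as required; combining them reproduces \eqref{eq:R1nice}.

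To ensure the split pieces are bona fide elements of $\S^*$ rather than merely formal series, I would finally use the continuity hypothesis: since $z\mapsto Jh_\u(z,\cdot)\in L^1(E,F)$ is continuous, the Morera-type argument of Theorem~\ref{ps to ps} (through Lemma~\ref{prop:Morerageneral}) shows that $\la\ast\int_E\big(w(y)-\sum_{|\beta|=1}\u^{(\beta)}\ast\underline{\bm{j}}(y)^{\ast\beta}\big)F(dy)$ is the coefficient sequence of a holomorphic function, so that $L(\u)\in\S^*$ and each of the two reassembled integrals defines an element of $\S^*$. The main obstacle is exactly this integral-splitting step: absent the extra assumption $h_\u\in\Dcal(J)$, Theorem~\ref{theorem: new} only controls the single combined integral of \eqref{eq:R1}, and neither the linearly compensated integrand nor the exponential remainder need be integrable on its own, so the passage to the form \eqref{eq:R1nice} involving $L$ would not be justified.
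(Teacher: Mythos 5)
Your proposal is correct and is essentially the paper's own argument in mirror image: the paper performs the identical split --- adding and subtracting the linearly compensated term, i.e.\ $Je^{h_\u}=e^{h_\u}Jh_\u+\big(Je^{h_\u}-e^{h_\u}Jh_\u\big)$ --- at the level of functions, justified by the two separate memberships $e^{h_\u}\in\Dcal(J)$ and $h_\u\in\Dcal(J)$ together with the continuity hypotheses, and then passes to coefficients exactly ``as in the proof of Theorem~\ref{ps to ps}'', i.e.\ via Lemma~\ref{prop:Morerageneral}, which is precisely what your algebraic splitting of \eqref{eq:R1} plus your final paragraph accomplish at the sequence level. The only (harmless) imprecision is that, writing $w(y)=\u\circ^s\underline{\bm{j}}(y)-\u$, the sequence $\la\ast\big(\bm{\exp}^*(w(y))-\mathbf{1}-\sum_{|\beta|=1}\u^{(\beta)}\ast\underline{\bm{j}}(y)^{\ast\beta}\big)$ gives the coefficients of $e^{-h_\u(z)}Je^{h_\u}(z,\cdot)$ rather than of $Je^{h_\u}(z,\cdot)$ itself, which changes nothing since the factor $e^{-h_\u(z)}$ is constant in $y$ and so does not affect integrability or $L^1$-continuity in $z$.
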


As for the holomorphic case, several corollaries can then be deduced by Theorem~\ref{theorem: new}.
The proof of the following result is given in Appendix~\ref{proof: theorem affine weight}.
\begin{corollary}\label{theorem: affine_weigh}
Let $X=(X_t)_{t\in[0,T]}$ be an $S$-valued jump-diffusion with characteristics $(b,a,K)$ and extended generator $\Acal$. Fix a weight function $v$ in the sense of Definition~\ref{def2}. Assume that $b_j,a_{ij}\in\Ocal(S)$, $K$ is a kernel with holomorphic jump size, and let $F$ be the corresponding non-negative measure on the measurable space $E$.  Fix $\vare>0$ and suppose that for each $z\in P^d_{R(S)+\vare}(0)$, there exists $\delta_z>0$ such that
 \begin{align}
         &\int_E  \ \sup_{w\in P^d_{\delta_z}(z)}  \|j_\vare(w, y)\|\land\|j_\vare(w, y)\|^2 \ F(d y)<\infty,\label{eq: affine weight}\\
  &\int_E  \sup_{w\in P^d_{\delta_z}(z)} 1_{\{\|j_\vare(w, y)\|> 1\}} \exp(m \ v(\|w+j_\vare(w, y)\|))\ F(d y)<\infty,\label{eq:M affine weight}
  \end{align}
  for some $m\in \R_+$. Let $G\in (0,\infty]^d$ satisfy \eqref{eqnG} and set
\begin{align*}
    \mathcal{V}:=H_v(P^d_G(0)) \qquad \text{and}\qquad 
     \mathcal{V}^*:=\{\u\in \SS \colon h_\u=h|_{{P}^d_{R(S)}(0)},  \ h\in \mathcal{V}\}.\nonumber 
\end{align*}
Then $X$ is an $S$-valued $\mathcal{V}$-affine-holomorphic process and for all $\u\in\mathcal{V}^*$, $$\mathcal{A}\exp(h_\u)=\exp(h_{\u})R(h_\u)|_S,$$ where $R:\mathcal{V}^*\rightarrow \S^*$ is given by
\eqref{eq:R1nice}.
\end{corollary}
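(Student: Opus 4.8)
The plan is to deduce the statement directly from Theorem~\ref{theorem: new} together with Lemma~\ref{lem:niceR}: it suffices to show that every $h=h_\u$ with $\u\in\Vcal^*$, i.e.\ every $h\in H_v(P^d_G(0))$, satisfies the membership and continuity requirements appearing in those two results, namely $e^h\in\Dcal(J)$, $h\in\Dcal(J)$, and that both maps $z\mapsto Je^h(z,\cdot)$ and $z\mapsto Jh(z,\cdot)$ are continuous from $P^d_{R(S)+\vare}(0)$ into $L^1(E,F)$. Once this is verified, Theorem~\ref{theorem: new} yields the affine-holomorphic property with $R$ as in \eqref{eq:R1}, and the additional $L^1$-continuity of $z\mapsto Jh(z,\cdot)$ upgrades the representation to \eqref{eq:R1nice} via Lemma~\ref{lem:niceR}.

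First I would fix $z\in P^d_{R(S)+\vare}(0)$ together with the associated $\delta_z>0$ and split the defining integral $\int_E|Je^h(z,y)|\,F(dy)$ along the small-jump set $\{\|j_\vare(z,y)\|\le 1\}$ and the large-jump set $\{\|j_\vare(z,y)\|>1\}$. On the small-jump set, since $e^h$ is holomorphic on $P^d_G(0)$ and hence, together with its first and second derivatives, bounded on the compact polydisc $\overline{P^d_{\delta_z}(z)}$, a second-order Taylor estimate gives $|Je^h(w,y)|\le C_z\,\|j_\vare(w,y)\|^2$ uniformly for $w$ in this neighbourhood of $z$; this is $F$-integrable by \eqref{eq: affine weight}. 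On the large-jump set the dominant term is $|e^{h(z+j_\vare(z,y))}|$, which by $h\in H_v$ obeys $|e^{h(w)}|=e^{\operatorname{Re}h(w)}\le \exp(\|h\|_v\,v(\|w\|))$; choosing the parameter $m\ge\|h\|_v$, condition \eqref{eq:M affine weight} dominates the large-jump contribution. This proves $e^h\in\Dcal(J)$. The same splitting applies to $h$ itself, the large-jump term now being controlled by the linear bound $|h(w)|\le\|h\|_v v(\|w\|)$ and the elementary inequality $v\le m^{-1}\exp(m v)$, so that \eqref{eq:M affine weight} again provides domination; hence $h\in\Dcal(J)$.

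Next I would establish the $L^1(E,F)$-continuity. The purpose of taking the suprema $\sup_{w\in P^d_{\delta_z}(z)}$ inside \eqref{eq: affine weight}--\eqref{eq:M affine weight} is precisely to produce, on a neighbourhood of each $z$, an $F$-integrable dominating function for $y\mapsto Je^h(w,y)$ that is uniform in $w$. Combined with the continuity of $w\mapsto Je^h(w,y)$ for each fixed $y$ (inherited from the holomorphy of $e^h$ and of $j_\vare(\cdot,y)$), the dominated convergence theorem then yields continuity of $z\mapsto Je^h(z,\cdot)$ in $L^1(E,F)$, and likewise of $z\mapsto Jh(z,\cdot)$. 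With all hypotheses of Theorem~\ref{theorem: new} and Lemma~\ref{lem:niceR} thus in force, the conclusion follows; the coefficients $\m^\beta$ and the series manipulations leading to \eqref{eq:R1nice} are exactly those already justified in the proofs of Theorem~\ref{ps to ps} and Lemma~\ref{lem:niceR}, with Morera's theorem (Lemma~\ref{prop:Morerageneral}) used to pass holomorphy through the $F$-integral.

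The main obstacle is the large-jump estimate. Unlike the purely holomorphic Corollary~\ref{prop:unboundedjs}, where $\|h\|_v$ enters only as a linear prefactor multiplying $\int v(\|z+j_\vare\|)\,F(dy)$, here the norm $\|h\|_v$ sits inside the exponential through $e^h$, so the weight $v$ must be controlled via the exponentially amplified condition \eqref{eq:M affine weight}. Matching the growth of $e^h$ to this condition, and in particular the interplay between $\|h\|_v$ and the parameter $m$, is the delicate point; everything else is a routine adaptation of the holomorphic case.
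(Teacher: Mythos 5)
Your proposal is correct and follows essentially the same route as the paper's own proof: both reduce the claim to verifying the hypotheses of Theorem~\ref{theorem: new} and Lemma~\ref{lem:niceR}, establish $h,e^{h}\in\Dcal(J)$ by splitting into small jumps (second-order Taylor bound controlled by \eqref{eq: affine weight}) and large jumps (the bound $|e^{h(w)}|\le\exp(\|h\|_v\,v(\|w\|))$ matched against \eqref{eq:M affine weight}), and obtain the $L^1(E,F)$-continuity by dominated convergence exactly as in Corollary~\ref{prop:unboundedjs}. The interplay between $m$ and $\|h\|_v$ that you single out as delicate is handled identically (and equally implicitly) in the paper, which notes that $m$ enters precisely because only the exponent of $e^{h}$, and not the function itself, can be bounded by the weight.
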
 
Also in this case similar considerations as in Remark~\ref{remark:nofinitemoments} apply. 
\begin{remark}
Suppose that jump sizes are locally uniformly bounded in the sense of \eqref{eqn_bdd}.
     In this case condition \eqref{eq:M affine weight}  is implied by \eqref{eq: affine weight} for each weight function $v$ and the result of Corollary~\ref{theorem: affine_weigh} holds for $\Vcal=H(P_G^d(0))$. This is of particular interest when $S$ is bounded and condition \eqref{eqn_bdd} is automatically satisfied.
\end{remark}

Finally, we analyze the class of jump-diffusion processes with jump sizes that do not depend on the current value of the process.  The proof of the following proposition can be found in Appendix~\ref{proof: prop_affine_Levy}. Here $\Im(z)$ denotes the imaginary part of $z$.
\begin{corollary}\label{prop:affine_Levy}
Let $X=(X_t)_{t\in[0,T]}$ be an $S$-valued jump-diffusion with characteristics $(b,a,K)$ and extended generator $\Acal$. Assume that $b,a,\lambda\in\Ocal(S)$ and $K(x,d\xi)=\lambda(x)F(d\xi)$. Let $G\in (0,\infty]^d$ satisfy \eqref{eqnG} and set
\begin{align*}
    \mathcal{V}&\subseteq\{h\in H(P_G^d(0)) \colon |h(z)|\leq  g_h(\Im(z)), 
   \text{ for $g_h:\Rset\rightarrow\Rset_+$ continuous, $z\in \C$}\},\nonumber \\
 \mathcal{V}^*&:=\{\u\in \SS \colon h_\u=h|_{{P}^d_{R(S)}(0)},  \ h\in \mathcal{V}\}. \nonumber
\end{align*}
     Then $X$ is an $S$-valued $\mathcal{V}$-affine-holomorphic process and for all $\u\in\mathcal{V}^*$, $$\mathcal{A}\exp(h_\u)=\exp(h_\u)R(h_{\u})|_S,$$ where $R:\mathcal{V}^*\rightarrow \S^*$ is given by \eqref{eq:R1nice}
for $\j(\xi)=\xi\mathbf{1}$. 
\end{corollary}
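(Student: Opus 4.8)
The plan is to deduce the statement from Theorem~\ref{theorem: new} together with Lemma~\ref{lem:niceR}, verifying their hypotheses directly. First I record that, under the standing assumption $K(x,d\xi)=\lambda(x)F(d\xi)$, the kernel has holomorphic jump size in the sense of Definition~\ref{def:kernelholojs} with $E=\Rset^d$, intensity $\lambda\in\Ocal(S)$ (positive on $S$), and a \emph{constant-in-state} jump size $j(z,\xi)=\xi$; in sequence notation this reads $\underline{\bm{j}}(\xi)=\xi\mathbf 1$, exactly the $\j(\xi)=\xi\mathbf 1$ of the statement, so that \eqref{eqnG} and the composition $\u\circ^s\underline{\bm{j}}(\xi)$ merely encode the map $z\mapsto z+\xi$. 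It then remains to check that every $h\in\mathcal V$ satisfies both the requirements of Theorem~\ref{theorem: new} (for $e^h$) and the extra ones of Lemma~\ref{lem:niceR} (for $h$ itself): namely $h,e^h\in\Dcal(J)$ and the $L^1(E,F)$-continuity of $z\mapsto Jh(z,\cdot)$ and $z\mapsto Je^h(z,\cdot)$.

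The key observation, and the feature that renders the growth hypothesis $|h(z)|\le g_h(\Im(z))$ effective, is that the jumps are \emph{real}: since $\xi\in\Rset^d$ one has $\Im(z+\xi)=\Im(z)$, and by convexity of $P^d_G(0)$ the whole segment $\{z+t\xi:t\in[0,1]\}$ stays in $P^d_G(0)$ with constant imaginary part. Hence $|h(z+\xi)|\le g_h(\Im(z))$ uniformly in $\xi$, and likewise $|e^{h(z+\xi)}|=e^{\Re h(z+\xi)}\le e^{g_h(\Im(z))}$ uniformly in $\xi$. To obtain $h,e^h\in\Dcal(J)$ I would split the integral defining $Jh(z,\cdot)$ (respectively $Je^h(z,\cdot)$) at $\|\xi\|=1$: on $\{\|\xi\|\le1\}$ a second-order Taylor estimate along the real segment bounds the increment by $\tfrac12\|\xi\|^2\sup_{t\in[0,1]}\|\nabla^2 h(z+t\xi)\|$ (resp.\ for $e^h$), the supremum being finite as the segment lies in a compact subset of $P^d_G(0)$; on $\{\|\xi\|>1\}$ the uniform modulus bound controls the value terms by a constant and the gradient term by $C\|\xi\|$. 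Both pieces are dominated by a multiple of $\|\xi\|\wedge\|\xi\|^2$, which is $F$-integrable since $\int_{\Rset^d}\|\xi\|\wedge\|\xi\|^2\,F(d\xi)<\infty$ (recall $K(x,\cdot)=\lambda(x)F$ with $\lambda>0$); in particular $F(\{\|\xi\|>1\})<\infty$, absorbing the constant terms.

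For the $L^1$-continuity I would fix $z_0$ and a compact neighborhood $N\subseteq P^d_{R(S)+\vare}(0)$; since $g_h$ is continuous, $g_h(\Im(\cdot))$ is bounded on $N$, and $\lambda,\nabla h,\nabla^2 h$ (resp.\ the derivatives of $e^h$) are locally bounded by holomorphicity. Running the same two-regime estimate uniformly over $z\in N$ produces a single $F$-integrable dominating function of the form $C(\|\xi\|\wedge\|\xi\|^2)$, independent of $z\in N$. As $Jh(z,\xi)$ (resp.\ $Je^h(z,\xi)$) is continuous in $z$ for each fixed $\xi$, dominated convergence gives $\|Jh(z_n,\cdot)-Jh(z_0,\cdot)\|_{L^1(E,F)}\to0$ whenever $z_n\to z_0$, and similarly for $e^h$. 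With all hypotheses in place, Theorem~\ref{theorem: new} shows that $X$ is $\mathcal V$-affine-holomorphic, and Lemma~\ref{lem:niceR} upgrades the operator to the form \eqref{eq:R1nice}; substituting $\underline{\bm{j}}(\xi)=\xi\mathbf 1$ yields the asserted representation. I expect the $L^1$-continuity step to be the main technical obstacle, precisely because it demands a dominating bound uniform over a neighborhood of $z$; the real-jump invariance of the imaginary part is what converts the pointwise growth hypothesis into such a uniform bound and is the crux of the whole argument.
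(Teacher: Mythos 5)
Your proposal is correct and takes essentially the same route as the paper's proof: the paper likewise observes that the constant, real-valued jump size $j_\vare(\cdot,\xi)=\xi$ forces $\Im(z+\xi)=\Im(z)$, so that $\int_{\R^d}\|\xi\|\wedge\|\xi\|^2\,F(d\xi)<\infty$ together with the bound $|h(z+\xi)|1_{\{\|\xi\|>1\}}\leq g_h(\Im(z))1_{\{\|\xi\|>1\}}$ reduces everything to the argument of Corollary~\ref{theorem: affine_weigh}, i.e.\ to checking $h,e^h\in\Dcal(J)$ and the $L^1(E,F)$-continuity required by Theorem~\ref{theorem: new} and Lemma~\ref{lem:niceR} via dominated convergence. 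The only cosmetic difference is that you obtain the small-jump estimate by a Taylor bound along the real segment $z+t\xi$ (valid here precisely because $\xi$ is real), whereas the paper invokes its complex Taylor-remainder estimate (Proposition~\ref{propclass}\ref{class:it3}) as in the proof of Corollary~\ref{prop:unboundedjs}.
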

\begin{remark}\phantomsection\label{rem: affine_Levy_affine}
\begin{enumerate}
\item Observe that for $d=1$ the operator $R$ given by \eqref{eq:R1nice} reads
     \begin{align}\label{eq:R_1dim}
    R(\u)= L(\u)&+\frac{1}{2}(\a\ast \u^{(1)}\ast\u^{(1)})_\alpha\\
    &+\la \ast \int_{\Rset}\bm{\exp}^*\big(\u\circ^s\j(\xi)-\u\big)-\mathbf{1}-\big(\u\circ^s\j(\xi)-\u\big)\ F(d \xi),\nonumber 
\end{align}
where  $L$ denotes the operator given in Equation \eqref{eq:L_1dim}.
    \item Fix again  $d=1$. By Proposition~\ref{propE3}, the set  $B_\tau$  of  
 all entire functions of order not exceeding 1 and of type not exceeding $\tau\in (0,\infty)$, which are bounded on $\R$ is included in the set $\mathcal{V}$ specified in Corollary~\ref{prop:affine_Levy}.

    More generally, we can extend the above result beyond $B_\tau$ and consider for instance functions of the form $h(z):=\exp(-z^2)$ for every $z=(x+iy)\in \mathbb{C}$, for which a direct computation shows that $|\exp(-z^2)|\leq \exp(y^2)$ for every $z$.

     \item  \label{rem: affine_Lévy_affine ii}Observe that a slight adaptation of Corollary~\ref{prop:affine_Levy} applies in particular if for all $x\in S$  
        \begin{align*}
            K(x,d\xi)=F_0(d\xi)+xF_1(d\xi),
        \end{align*}
        for some signed measures $F_0(d\xi)$, $F_1(d\xi)$ on $\Rset^d$ for which $\int_{\Rset^d}\|\xi\|\wedge\|\xi\|^2 |F_i|(d\xi)<\infty$, for $i=0,1$.
        That is, $K$ is the compensator of the jump measure of a $S$-valued  affine jump-diffusion (see e.g. \cite{FL:20}).

        Additionally, kernels of the form $K(x,d\xi)=\sum_{j=0}^N \lambda_j(x)F_j(d\xi)$, for some positive entire functions $\lambda_j(x)$ and positive measures   $F_j(d\xi)$ and limits thereof could also be considered (see for instance Section~2.3 in \cite{CLS:18}).
\end{enumerate}

\end{remark}
\begin{remark}
    Note that often a $\Vcal$-holomorphic process is also $\Vcal$-affine-holomorphic. This is however not true in general as one can see considering $\Vcal$ consisting of linear maps only and $\Acal$ being the generator of a diffusion process.
    
    Vice versa, observe that each $\Vcal$-affine-holomorphic process is an $\Wcal$-holomorphic process for $\Wcal:=\{\exp(h)\colon h\in \Vcal\}$. 
\end{remark}

\subsection{The affine-holomorphic transform formula}
 
Fix $\Vcal\subseteq\mathcal{O}(S)$ and let $X=(X_t)_{t\in[0,T]}$ be an $S$-valued $\Vcal$-affine-holomorphic process. Denote by $\mathcal{A}$ its extended generator, and by $R:\mathcal{V}^*\rightarrow \SS$ a linear operator such that 
$\mathcal{A}\exp(h_\u)=\exp(h_\u)h_{R(\u)}|_S$, for all $\u\in \Vcal^*$, where $\Vcal^*$ denotes the set of coefficients determining some power series representation of the functions in $\Vcal$. 

Parallel to the discussion pertaining to holomorphic processes in Section~\ref{sec: the holomorphic formula}, we here exploit duality methods to compute the expected value of $\exp(h_\u(X_t))$, for $t\in [0,T]$ and $h_\u\in \mathcal{V}$.

Recall from Section~\ref{sec212} that integrals of sequences are defined componentwise.

\begin{theorem}\label{th: affine formula}
  Set $X_0=x_0\in S$, fix $\u\in \Vcal^*$, and suppose that the following conditions hold true.
\begin{enumerate}\phantomsection
    \item \label{affine formula i}The sequence-valued  ODE
    \begin{align}\label{eq:ODE sequence space affine}
        \ppsi(t)=\u+\int_0^tR(\ppsi(s))ds, \qquad t\in [0,T],
    \end{align}
    admits a $\Vcal^*$-valued solution $(\ppsi(t))_{t\in[0,T]}$ such that 
   \begin{align}\label{eq: ODE Riccati pointwise}
       h_{\ppsi(t)}(x)=h_\u(x)+\int_0^th_{R(\ppsi(s))}(x)ds,
   \end{align}
  for all $x\in S$, $t\in[0,T]$.
    \item \label{affine formula ii}The process $N^{\exp(h_{\ppsi(s)})}$ given in equation \eqref{eq:localmart} defines a true martingale for each $s\in [0,T].$
    \item \label{affine formula iii}$\int_0^T\int_0^T\E[|\mathcal{A}\exp(h_{\ppsi(s)})(X_t)|]dsdt<\infty$.
\end{enumerate}
Then it holds that 
\begin{align}\label{eq: affine holo formula}
    \E[\exp(h_\u(X_T))]=\exp(h_{\ppsi(T)})(x_0).
\end{align}
\end{theorem}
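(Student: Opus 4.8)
The plan is to mirror the proof of Theorem~\ref{th: moment formula holomorphic}, replacing the duality function $h_\u$ by its exponential $\exp(h_\u)$, and to again invoke the duality result Lemma~A.1 in \cite{CST:23}. Accordingly, I would set
$$Y^1(s,t):=\exp(h_{\ppsi(s)}(X_t)),\qquad Y^2(s,t):=\mathcal{A}\exp(h_{\ppsi(s)})(X_t),$$
and observe that, by the defining property of the Riccati operator $R$, the second process equals $\exp(h_{\ppsi(s)}(X_t))\,h_{R(\ppsi(s))}(X_t)$. As in the holomorphic case, continuity of $s\mapsto Y^1(s,t)(\omega)$ and joint measurability of $(t,\omega)\mapsto Y^1(s,t)$ give that $Y^1,Y^2\colon[0,T]\times[0,T]\times\Omega\to\C$ are measurable, which is the first hypothesis of the lemma.

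Next I would establish the two integral identities that feed into the duality lemma. The martingale identity in the $t$-variable comes directly from assumption~\ref{affine formula ii}: writing out $N^{\exp(h_{\ppsi(s)})}$ and taking expectations gives $\E[Y^1(s,t)]=\exp(h_{\ppsi(s)}(x_0))+\int_0^t\E[Y^2(s,u)]\,du$. The ODE identity in the $s$-variable is where the Riccati structure enters: assumption~\ref{affine formula i} tells me that, for each fixed $x\in S$, the map $u\mapsto h_{\ppsi(u)}(x)$ is absolutely continuous with derivative $h_{R(\ppsi(u))}(x)$, and composing with the entire function $\exp$ and using again $\mathcal{A}\exp(h_{\ppsi(u)})=\exp(h_{\ppsi(u)})h_{R(\ppsi(u))}$ yields $\frac{d}{du}\exp(h_{\ppsi(u)}(x))=\mathcal{A}\exp(h_{\ppsi(u)})(x)$. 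Integrating from $0$ to $s$ and evaluating at $x=X_t$ gives $Y^1(s,t)-Y^1(0,t)-\int_0^sY^2(u,t)\,du=0$.

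Finally, I would feed these two identities, together with the integrability bound $\int_0^T\int_0^T\E[|Y^2(s,t)|]\,ds\,dt<\infty$ of assumption~\ref{affine formula iii}, into Lemma~A.1 in \cite{CST:23}, whose conclusion $\E[Y^1(0,T)]=Y^1(T,0)$ is exactly \eqref{eq: affine holo formula} (using $\ppsi(0)=\u$ on the left and $X_0=x_0$ on the right). The main obstacle I anticipate is the ODE identity: one must pass rigorously from the coefficient-level Riccati flow \eqref{eq:ODE sequence space affine} to the exponential level, justifying the chain-rule step $\frac{d}{du}\exp(h_{\ppsi(u)}(x))=\exp(h_{\ppsi(u)}(x))h_{R(\ppsi(u))}(x)$ from the pointwise identity~\eqref{eq: ODE Riccati pointwise}. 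Here absolute continuity of $u\mapsto h_{\ppsi(u)}(x)$ suffices, since $\exp$ is Lipschitz on bounded sets and $\ppsi$ is $\Vcal^*$-valued; the remainder is the bookkeeping of matching hypotheses, identical to the holomorphic case.
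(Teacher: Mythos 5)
Your proposal is correct and takes essentially the same route as the paper's proof: the same choice of $Y^1(s,t)=\exp(h_{\ppsi(s)})(X_t)$ and $Y^2(s,t)=\exp(h_{\ppsi(s)})h_{R(\ppsi(s))}(X_t)$, the same martingale identity in $t$ from assumption \ref{affine formula ii}, the same $s$-direction identity derived from assumption \ref{affine formula i}, and the same appeal to Lemma~A.1 in \cite{CST:23} with the integrability condition \ref{affine formula iii}. If anything, you are slightly more explicit than the paper in justifying the chain-rule passage from the pointwise Riccati identity \eqref{eq: ODE Riccati pointwise} to the corresponding integral identity for $\exp(h_{\ppsi(\cdot)}(X_t))$, a step the paper leaves implicit.
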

The proof of Theorem~\ref{th: affine formula} follows the proof of Theorem~\ref{th: moment formula holomorphic}. However, for the sake of completeness, we include it below.
\begin{proof}
    We verify the conditions of Lemma A.1 in \cite{CST:23}. Set
    \begin{align*}
           Y^1(s,t):=\exp(h_{\ppsi(s)})(X_t),\qquad Y^2(s,t):=\exp(h_{\ppsi(s)}) h_{R(\ppsi(s))}(X_t),
    \end{align*}
    By continuity of $Y^1(\cdot,t)(\omega)$ and measurability (on $[0,T]\times \Omega$) of $Y^1(s,\cdot)$ the two maps $Y^1,Y^2:[0,T]\times [0,T]\times \Omega\longrightarrow\C$ are measurable functions. 
    Next, observe that by the assumption~\ref{moment formula ii} it holds that for each $s,t\in[0,T]$, the process $(N^{\exp(h_{\c(s)})}_t)_{t\in[0,T]}$, whose explicit form reads 
       \begin{align}\label{eq: true martingale affine }
           \exp(h_{\ppsi(s)})(X_t)-\exp(h_{\ppsi(s)})(x_0)-\int_0^t\exp(h_{\ppsi(s)})h_{R(\ppsi(s))}(X_u)du, 
       \end{align}
       is a true martingale. Moreover,  by  assumption~\ref{affine formula i} 
       it holds that for every    $s,t\in [0,T]$,
    \begin{align}\label{eq:solution Riccati ode}
      h_{\ppsi(s)}(X_t)-h_{\ppsi(0)}(X_t)-\int_0^s  h_{R(\ppsi(u))}(X_t)du=0.
    \end{align}
   Finally, taking expectation in both equations \eqref{eq: true martingale affine }, \eqref{eq:solution Riccati ode} and by assumption~\ref{moment formula iii},  all the hypothesis of the above mentioned lemma are satisfied, and thus the claim follows.
\end{proof}

\begin{remark}
\begin{enumerate}
    \item  Also in this case, an inspection of the proof shows that \eqref{eq:ODE sequence space affine} is not necessary. Specifically, condition~\ref{moment formula i} can be replaced by the assumption of the existence
of a $\Vcal^*$-valued map $(\ppsi(t))_{t\in[0,T]}$  such that $$
       h_{\ppsi(t)}(x)=h_\u(x)+\int_0^th_{L(\ppsi(s))}(x)ds$$
  for all $x\in S$, $t\in[0,T]$.

  On the other hand if  $\int_0^T|R(\ppsi(s))|_xds<\infty$ for all $x\in S$, $t\in [0,T]$ then condition \eqref{eq:ODE sequence space affine} implies \eqref{eq: ODE Riccati pointwise}.

  \item Recall that the operator $R$ might not be unique. Again, this is not an issue as explained
in Remark~\ref{rem2}\ref{rem2_2}.
       \item Notice that the claim of Theorem~\ref{th: affine formula} for the $\mathcal{V}$-affine-holomorphic process $X$ coincides with the assertion of Theorem 3.9 in \cite{CST:23} for the (infinite-dimensional) $\mathcal{V}^*$-affine process $\mathbb{X}$ introduced in Equation \eqref{eq: boldX infinite} and discussed in Remark~\ref{rem : affine infinite dim}.
    
    \item Let $X$ be an $S$-valued affine jump-diffusion and recall from Remark~\ref{rem : affine infinite dim} that $X$ is an $S$-valued $\mathcal{V}$-affine-holomorphic process, for 
    $\Vcal=   \{h:\Rset^d \rightarrow \C \ \colon \ h(x):=iu^\top x, \ u\in \Rset^d\}$. In this case, formula \eqref{eq: holomorphic formula} coincides with the so-called affine transform formula in Theorem 2 in \cite{FL:20}. 
\end{enumerate}
\end{remark}

The remaining part of the section is dedicated to the study of sufficient conditions for the application of Theorem~\ref{th: affine formula}. In particular, we specify some conditions for the assumptions~\ref{affine formula ii} and~\ref{affine formula iii} in Theorem~\ref{th: affine formula} to be satisfied. 
Additionally to the assumption made at the beginning of the section, we suppose that for each $\u\in\Vcal^*$, there exists a $\Vcal^*$-valued solution of the linear ODE \eqref{eq:ODE sequence space affine}, with initial value $\u$, that we denote by $(\ppsi(t))_{t\in[0,T]}$.

The next lemma pertains to affine-holomorphic processes whose extended generator $\mathcal{A}$ acts between (the restriction on $S$ of) weighted spaces of entire functions (see Definition~\ref{def2}). The proof of the following result follows the proof of Lemma~\ref{lemma: lemma moment formula weights}.

\begin{lemma}
Assume that $\{\exp(h)\colon h\in\Vcal\}\subseteq H_v(\C^d)$ and  $\{ \Acal(\exp(h))\colon h\in \Vcal\}\subseteq H_w(\C^d) $, for some weight functions $v$ and $w$. Suppose moreover that  
 $ \E[\sup_{t\leq T}v(\|X_t\|)]$,  $\E[\sup_{t\leq T}w(\|X_t\|)],$ and $
       \int_0^T\|h_{\bm{\exp}^*(\ppsi(s))*\mathcal R(\ppsi(s))}\|_wds$ 
 are finite.
Then conditions~\ref{affine formula ii} and~\ref{affine formula iii} of Theorem~\ref{th: affine formula} are satisfied.
  
\end{lemma}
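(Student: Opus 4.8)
The plan is to mirror the proof of Lemma~\ref{lemma: lemma moment formula weights}, simply replacing the holomorphic test functions $h_{\c(s)}$ by the exponentials $\exp(h_{\ppsi(s)})$ and the operator $L$ by $R$. First, since the solution $(\ppsi(t))_{t\in[0,T]}$ is $\Vcal^*$-valued by assumption, for each fixed $s\in[0,T]$ we have $h_{\ppsi(s)}\in\Vcal$, so by Definition~\ref{def:affine-holomorphic process} the process $N^{\exp(h_{\ppsi(s)})}$ of \eqref{eq:localmart} is a local martingale. The key algebraic step I would carry out is to identify its compensator term: by the affine-holomorphic property $\A\exp(h_{\ppsi(s)})=\exp(h_{\ppsi(s)})h_{R(\ppsi(s))}|_S$, and since the product of two convergent power series corresponds to the $\ast$-convolution of their coefficient sequences (Section~\ref{sec 214}),
$$\A\exp(h_{\ppsi(s)})=h_{\bm{\exp}^*(\ppsi(s))\ast R(\ppsi(s))}|_S.$$
By hypothesis $\exp(h_{\ppsi(s)})\in H_v(\C^d)$ and $\A\exp(h_{\ppsi(s)})\in H_w(\C^d)$, so both $\|\exp(h_{\ppsi(s)})\|_v$ and $\|h_{\bm{\exp}^*(\ppsi(s))\ast R(\ppsi(s))}\|_w$ are finite; this is exactly the object whose time-integral is assumed finite.

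Next, to obtain condition~\ref{affine formula ii} I would upgrade the local martingale to a true martingale by dominating $\sup_{t\leq T}|N^{\exp(h_{\ppsi(s)})}_t|$ in $L^1$. Using $X_t\in S\subseteq\R^d\subseteq\C^d$ together with the defining bounds $|\exp(h_{\ppsi(s)})(z)|\leq\|\exp(h_{\ppsi(s)})\|_v\,v(\|z\|)$ and $|\A\exp(h_{\ppsi(s)})(z)|\leq\|h_{\bm{\exp}^*(\ppsi(s))\ast R(\ppsi(s))}\|_w\,w(\|z\|)$ valid on $\C^d$, I estimate
\begin{align*}
\E\Big[\sup_{t\leq T}\big|N^{\exp(h_{\ppsi(s)})}_t\big|\Big]
&\leq 2\,\E\Big[\sup_{t\leq T}\big|\exp(h_{\ppsi(s)})(X_t)\big|\Big]+T\,\E\Big[\sup_{t\leq T}\big|\A\exp(h_{\ppsi(s)})(X_t)\big|\Big]\\
&\leq \|\exp(h_{\ppsi(s)})\|_v\,\E\Big[\sup_{t\leq T}v(\|X_t\|)\Big]+T\,\|h_{\bm{\exp}^*(\ppsi(s))\ast R(\ppsi(s))}\|_w\,\E\Big[\sup_{t\leq T}w(\|X_t\|)\Big],
\end{align*}
which is finite by the assumed finiteness of $\E[\sup_{t\leq T}v(\|X_t\|)]$ and $\E[\sup_{t\leq T}w(\|X_t\|)]$. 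A local martingale dominated by an integrable random variable is a true martingale, giving condition~\ref{affine formula ii}.

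Finally, for condition~\ref{affine formula iii} the same weighted bound yields $\E[|\A\exp(h_{\ppsi(s)})(X_t)|]\leq\|h_{\bm{\exp}^*(\ppsi(s))\ast R(\ppsi(s))}\|_w\,\E[\sup_{t\leq T}w(\|X_t\|)]$ for every $s,t\in[0,T]$, whence
$$\int_0^T\!\!\int_0^T\E\big[|\A\exp(h_{\ppsi(s)})(X_t)|\big]\,ds\,dt
\leq T\,\Big(\int_0^T\|h_{\bm{\exp}^*(\ppsi(s))\ast R(\ppsi(s))}\|_w\,ds\Big)\,\E\Big[\sup_{t\leq T}w(\|X_t\|)\Big]<\infty,$$
using the remaining two finiteness hypotheses. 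This is exactly condition~\ref{affine formula iii}. The only genuinely new ingredient relative to the holomorphic case is the identification of $\A\exp(h_{\ppsi(s)})$ with $h_{\bm{\exp}^*(\ppsi(s))\ast R(\ppsi(s))}$ and the verification that it lies in $H_w(\C^d)$; once this is in place the estimates are routine, so I do not expect a substantial obstacle here.
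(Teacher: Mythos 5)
Your proof is correct and follows exactly the route the paper intends: the paper gives no separate argument for this lemma, stating only that it ``follows the proof of Lemma~\ref{lemma: lemma moment formula weights}'', and your proposal is precisely that adaptation, with $\exp(h_{\ppsi(s)})$ replacing $h_{\c(s)}$, the identification $\A\exp(h_{\ppsi(s)})=h_{\bm{\exp}^*(\ppsi(s))\ast R(\ppsi(s))}|_S$ replacing $\A h_{\c(s)}=h_{L(\c(s))}|_S$, and the same $L^1$-domination and Fubini-type estimates. No gap: the per-$s$ finiteness of $\|\exp(h_{\ppsi(s)})\|_v$ and $\|h_{\bm{\exp}^*(\ppsi(s))\ast R(\ppsi(s))}\|_w$ that your bounds use is supplied by the hypotheses $\{\exp(h)\colon h\in\Vcal\}\subseteq H_v(\C^d)$ and $\{\Acal(\exp(h))\colon h\in\Vcal\}\subseteq H_w(\C^d)$, exactly as in the holomorphic case.
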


Finally, we discuss the case of affine-holomorphic processes with values in a bounded state space $S$. An example of such processes is given by an affine process with compact state space (see \cite{KL:2018}) for which Corollary~\ref{prop:affine_Levy} directly applies (see Remark~\ref{rem: affine_Levy_affine}\ref{rem: affine_Lévy_affine ii}). This result is the equivalent of Lemma~\ref{lemma: i ii moment formula bounded} and since the proof is analogous we will omit it.

\begin{lemma}\label{lemma: i ii affine formula bounded}
    Assume that $S$ is a bounded set and that 
    \begin{align}\label{eq: integral R s compact}
        \int_0^T | \bm{\exp}^*(\ppsi(s))*R(\ppsi(s))|_{R(S)}ds<\infty.
    \end{align} Then condition \eqref{eq:ODE sequence space affine} implies \eqref{eq: ODE Riccati pointwise} and 
    conditions~\ref{affine formula ii} and~\ref{affine formula iii} of Theorem~\ref{th: affine formula} are satisfied.
\end{lemma}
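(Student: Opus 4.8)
The plan is to mirror the proof of Lemma~\ref{lemma: i ii moment formula bounded}, replacing the linear drift $h_{L(\c(s))}$ by the affine drift $\Acal\exp(h_{\ppsi(s)})$. The starting observation is that, since $X$ is $\Vcal$-affine-holomorphic and $\ppsi(s)\in\Vcal^*$ for every $s\in[0,T]$, the affine-holomorphic property gives
\begin{equation*}
\Acal\exp(h_{\ppsi(s)})=\exp(h_{\ppsi(s)})\,h_{R(\ppsi(s))}\big|_S=h_{\bm{\exp}^*(\ppsi(s))\ast R(\ppsi(s))}\big|_S,
\end{equation*}
where the last equality uses $\exp(h_{\ppsi(s)})=h_{\bm{\exp}^*(\ppsi(s))}$ and the product-to-$\ast$-product correspondence from Section~\ref{sec 214}. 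Moreover $N^{\exp(h_{\ppsi(s)})}$ is a local martingale for each $s$ by the definition of an affine-holomorphic process.

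First I would verify condition~\ref{affine formula ii}. For fixed $s$ both sequences $\bm{\exp}^*(\ppsi(s))$ and $\bm{\exp}^*(\ppsi(s))\ast R(\ppsi(s))$ lie in $\SS$, being the coefficients of the $\Ocal(S)$-functions $\exp(h_{\ppsi(s)})$ and $\Acal\exp(h_{\ppsi(s)})$, so their $|\cdot|_{R(S)}$-norms are finite. Since $S$ is bounded, $|x_i|\le R_i(S)$ for $x\in S$, whence $|\exp(h_{\ppsi(s)})(X_t)|\le|\bm{\exp}^*(\ppsi(s))|_{R(S)}$ and $|\Acal\exp(h_{\ppsi(s)})(X_r)|\le|\bm{\exp}^*(\ppsi(s))\ast R(\ppsi(s))|_{R(S)}$. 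Consequently $N^{\exp(h_{\ppsi(s)})}$ is uniformly bounded on $[0,T]$, and a bounded local martingale is a true martingale; this gives~\ref{affine formula ii}. For condition~\ref{affine formula iii} the same pointwise bound yields
\begin{equation*}
\int_0^T\!\!\int_0^T\E\big[|\Acal\exp(h_{\ppsi(s)})(X_t)|\big]\,ds\,dt\le T\int_0^T|\bm{\exp}^*(\ppsi(s))\ast R(\ppsi(s))|_{R(S)}\,ds<\infty
\end{equation*}
directly by assumption~\eqref{eq: integral R s compact}.

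It remains to show that~\eqref{eq:ODE sequence space affine} implies~\eqref{eq: ODE Riccati pointwise}. By the remark following Theorem~\ref{th: affine formula} it suffices to check $\int_0^T|R(\ppsi(s))|_x\,ds<\infty$ for every $x\in S$, and since $|R(\ppsi(s))|_x\le|R(\ppsi(s))|_{R(S)}$ it is enough to bound the latter. Here I would use the sub-multiplicativity $|\u\ast\v|_z\le|\u|_z\,|\v|_z$ (immediate from the definitions of $\ast$ and $|\cdot|_z$) together with the identity $R(\ppsi(s))=\bm{\exp}^*(-\ppsi(s))\ast\big(\bm{\exp}^*(\ppsi(s))\ast R(\ppsi(s))\big)$, which follows from $\bm{\exp}^*(-\ppsi(s))\ast\bm{\exp}^*(\ppsi(s))=\mathbf 1$ and associativity, to obtain
\begin{equation*}
|R(\ppsi(s))|_{R(S)}\le|\bm{\exp}^*(-\ppsi(s))|_{R(S)}\,|\bm{\exp}^*(\ppsi(s))\ast R(\ppsi(s))|_{R(S)}.
\end{equation*}
Combined with~\eqref{eq: integral R s compact}, the integrability $\int_0^T|R(\ppsi(s))|_{R(S)}\,ds<\infty$ then follows once $s\mapsto|\bm{\exp}^*(-\ppsi(s))|_{R(S)}$ is bounded on $[0,T]$.

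The main obstacle is precisely this last point: unlike the linear (holomorphic) case, where the hypothesis controls the drift coefficients $L(\c(s))$ directly, hypothesis~\eqref{eq: integral R s compact} controls $\bm{\exp}^*(\ppsi(s))\ast R(\ppsi(s))$ rather than $R(\ppsi(s))$ itself, and passing between the two costs the reciprocal factor $\bm{\exp}^*(-\ppsi(s))$. Local boundedness of $s\mapsto|\bm{\exp}^*(-\ppsi(s))|_{R(S)}$ holds as soon as $\exp(-h_{\ppsi(s)})$ is uniformly bounded on a polydisc $P^d_{R'}(0)$ with $R'>R(S)$ for $s\in[0,T]$, since the Cauchy estimates convert such a uniform sup-bound into a uniform bound on $|\cdot|_{R(S)}$; this is in particular guaranteed whenever the $\Vcal^*$-valued solution $s\mapsto\ppsi(s)$ is continuous in a topology making $\bm{\exp}^*(-\,\cdot\,)$ continuous. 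Under this mild regularity the implication follows, completing the proof.
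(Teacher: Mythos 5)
Your verification of conditions \ref{affine formula ii} and \ref{affine formula iii} is correct and is exactly the argument the paper intends: the paper omits the proof of this lemma, declaring it analogous to that of Lemma~\ref{lemma: i ii moment formula bounded}, and that proof consists of precisely your two steps --- for each fixed $s$ the local martingale $N^{\exp(h_{\ppsi(s)})}$ is bounded on the bounded state space (because $\bm{\exp}^*(\ppsi(s))$ and $\bm{\exp}^*(\ppsi(s))\ast R(\ppsi(s))$ lie in $\SS$ and hence have finite $|\cdot|_{R(S)}$-norms), so it is a true martingale; and the double expectation in \ref{affine formula iii} is dominated by $T\int_0^T|\bm{\exp}^*(\ppsi(s))\ast R(\ppsi(s))|_{R(S)}\,ds$, which is finite by \eqref{eq: integral R s compact}.

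The gap is in the first claim, that \eqref{eq:ODE sequence space affine} implies \eqref{eq: ODE Riccati pointwise}, and your own closing sentence marks it. Interchanging the sum over $\alpha$ with $\int_0^t$ requires $\int_0^T|R(\ppsi(s))|_x\,ds<\infty$ (this is the unnumbered remark following Theorem~\ref{th: affine formula}), whereas \eqref{eq: integral R s compact} controls the coefficients of $\exp(h_{\ppsi(s)})\,h_{R(\ppsi(s))}$, not those of $h_{R(\ppsi(s))}$. Your factorization $R(\ppsi(s))=\bm{\exp}^*(-\ppsi(s))\ast\bigl(\bm{\exp}^*(\ppsi(s))\ast R(\ppsi(s))\bigr)$ and the submultiplicativity $|\u\ast\v|_z\le|\u|_z\,|\v|_z$ are both valid for each fixed $s$, but the resulting bound is integrable in $s$ only if $s\mapsto|\bm{\exp}^*(-\ppsi(s))|_{R(S)}$ is essentially bounded on $[0,T]$, and this is neither a hypothesis of the lemma nor a consequence of its hypotheses: \eqref{eq:ODE sequence space affine} gives only componentwise absolute continuity of $s\mapsto\ppsi(s)_\alpha$, so each coefficient of $\bm{\exp}^*(-\ppsi(s))$ is bounded in $s$, but the series $|\bm{\exp}^*(-\ppsi(s))|_{R(S)}$ is merely a lower semicontinuous function of $s$ and may well be unbounded. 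So, as written, your argument proves the lemma only under an additional regularity assumption. To be fair, this mismatch is inherited from the paper itself: the ``analogous'' proof works in the holomorphic case because there the hypothesis bounds exactly the drift coefficients $L(\c(s))$ entering the pointwise equation, while here \eqref{eq: integral R s compact} bounds the drift of $\bm{\exp}^*(\ppsi(\cdot))$ instead of that of $\ppsi(\cdot)$. A hypothesis under which the full statement does go through is $\int_0^T|R(\ppsi(s))|_{R(S)}\,ds<\infty$: then the ODE implication is immediate from the cited remark, condition \ref{affine formula ii} follows as before, and \ref{affine formula iii} follows as well, since Tonelli applied to \eqref{eq:ODE sequence space affine} yields $\sup_{s\le T}|\ppsi(s)|_{R(S)}\le|\ppsi(0)|_{R(S)}+\int_0^T|R(\ppsi(u))|_{R(S)}\,du$ and $|\bm{\exp}^*(\ppsi(s))\ast R(\ppsi(s))|_{R(S)}\le\exp\bigl(|\ppsi(s)|_{R(S)}\bigr)\,|R(\ppsi(s))|_{R(S)}$ by submultiplicativity.
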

 \subsection{Applications}
Here we provide explicit examples illustrating how the affine holomorphic formula can be effectively applied.

\begin{example}
    As first illustration we consider again the finite state space $S:=\{x_1,\ldots,x_N\}\subseteq \R^d$ seen in Section~\ref{finspace}. Let $X=(X_t)_{t\in [0,T]}$ be  a continuous-time Markov chain with a finite-state space $S$ with $X_0=x_0\in S$ and  generator
        $$\Acal f(x_i)=\sum_{j=1}^N\lambda_{ij}(f(x_j)-f(x_i)),$$
        for some $\lambda_{ij}\geq0$.
Set $\Vcal:=\{h:S\to \R\}$ and note that since  $1_{\{\cdot =x_i\}}\in \Vcal$ we can fix $\v^i\in \mathcal{S}^*$ such that $h_{\v^i}(x)=1_{\{x=x_i\}}$. for   each $i\in\{1,\ldots, N\}$. Note that the linear operator $R$ given by \eqref{eq:R1} reads as 
    $$R(\u)=\sum_{i=1}^N \v^i\sum_{j=1}^N\lambda_{ij}(\exp(\u_j-\u_i)-1),$$
for each $\u$ such that $h_\u\in \Vcal$ and $\u_i:=h_\u(x_i)$.
    Solving $\ppsi(t)=\u+\int_0^tR(\ppsi(s))ds$
    is of course more involved than solving a  system of linear ODEs. As a first step observe that  setting $R(\u)_i:=h_{R(\u)}(x_i)$, we obtain the finite-dimensional system
    
    \begin{equation}\label{psifindim}
        \ppsi(t)_i=\u_i+\int_0^tR(\ppsi(s))_ids,\qquad i\in \{0,\ldots,N\}.
    \end{equation}
    For fixed $i\in\{1,\ldots, N\}$ and $\u\in \Vcal^*$, from Section~\ref{finspace} we know that setting 
    $$\c(t)_i:=
    (h_{\v^1}(x_i),\ldots,h_{\v^N}(x_i))\exp(t\widetilde L)(\exp(h_\u(x_1)),\ldots, \exp(h_\u(x_N)))^\top,$$
    for $\widetilde L\in \R^{N\times N}$  given by $\widetilde L_{ik}=\sum_{j=1}^N\lambda_{ij}(1_{\{k=j\}}-1_{\{k=i\}})$ and  $\exp(t\widetilde L)$ denoting the matrix exponential of $t\widetilde L$, we get that
    $$\c(t)_i=\exp(h_\u(x_i))+\int_0^th_{L(\c(s))}(x_i)ds$$
for  
$L(\u)=\sum_{i=1}^N \v^i\sum_{j=1}^N\lambda_{ij}(\u_j-\u_i)$
Note that since $\u_i\in \R$ and $\c(t)_i=\E[\exp(h_\u(X^{x_i}_t))]$, where $X^{x_i}$ is given by $X$ for $x_0=x_i$ we also have that $\c(t)_i>0$.

This in particular implies that $\ppsi(t)$ given by 
    $\ppsi(t)_i:=\log(\c(t)_i)$
      solves \eqref{psifindim}.
Since the other conditions of Theorem~\ref{th: affine formula}  are satisfied by finiteness of the state space, we can conclude that
$$\E[\exp(h_\u(X_T^{x_i}))]=\exp(h_{\ppsi(T)})(x_i)=\exp(\ppsi(T)_i).
$$
In some cases \eqref{psifindim} can also be solved explicitly. This is for instance the case for $N=2$. The system of ODEs in this case reads as
 \begin{equation*}
        \ppsi(t)_1=\u_1+\int_0^t
        \lambda_{12}(\exp(\ppsi(t)_2-\ppsi(t)_1)-1)
        ds,\qquad \ppsi(t)_2=\u_2+\int_0^t
        \lambda_{21}(\exp(\ppsi(t)_1-\ppsi(t)_2)-1)
        ds,
    \end{equation*}
whose solution is given by
\begin{align*}
    \ppsi(t)_i
&=
\log\Big(\frac {\lambda_{12}1_{\{i=1\}}-\lambda_{21}1_{\{i=2\}}} {\lambda_{12}+\lambda_{21}}(e^{\u_1}-e^{\u_2})e^{-(\lambda_{12}+\lambda_{21})t}+\frac {\lambda_{21}e^{\u_1}+\lambda_{12}e^{\u_2}} {\lambda_{12}+\lambda_{21}}\Big).
\end{align*}
Observe that writing $\ppsi(t)_i$ as $\u_i+\int_0^t \partial_s\ppsi(s)_i ds$ we get
\begin{align*}
    \ppsi(t)_i&=\u_i+\int_0^t
    \frac {-(\lambda_{12}1_{\{i=1\}}-\lambda_{21}1_{\{i=2\}})(\lambda_{12}+\lambda_{21})(e^{\u_1}-e^{\u_2})e^{-(\lambda_{12}+\lambda_{21})s}} { (\lambda_{12}1_{\{i=1\}}-\lambda_{21}1_{\{i=2\}}) (e^{\u_1}-e^{\u_2})e^{-(\lambda_{12}+\lambda_{21})s}+{\lambda_{21}e^{\u_1}+\lambda_{12}e^{\u_2}} }ds,
\end{align*}
which is well-defined for each $\u_1,\u_2\in \C$  such that the denominator is different from 0 for each $s\in [0,T]$ and $i\in \{1,2\}$. For all such $\u_1,\u_2\in \C$ the maps $\ppsi(t)$ solves \eqref{psifindim}. 
Note that to guarantee well-definiteness it  is sufficient to verify that $e^{\u_1}\neq \alpha e^{\u_2}$ for each $\alpha\in \R$.

\end{example}

To apply the affine-holomorphic formula, one key condition is proving the existence of a solution to the Riccati ODE \eqref{eq:ODE sequence space affine}. In general, the existence of such an equation can be deduced by the existence of a non-vanishing solution of the corresponding linear ODE.  This is derived from Proposition 4.36 in \cite{CST:23}, which we restate here for the reader's convenience.

\begin{proposition}\label{prop: existence riccati}
    
Fix $\u\in \Scal^*$ and $\v$ such that $\exp(h_\u)=h_\v$. Let $(\c(t))_{t\in [0,T]}$ be a solution of \eqref{eq:ODE sequence space holomorphic} for the initial condition $\c(0)=\v$. Assume that ${\bf c}(t)_{0}\neq0$ 
for each $t\in[0,T]$. Then there exists a solution of the Riccati equation \eqref{eq:ODE sequence space affine} with initial condition $\ppsi(0)=\u$ given by
\begin{align*}
\bm\psi(t)_{0}&=\u_{0}+\int_0^t\frac{L{\bf c}(s)_{0}}{{\bf c}(s)_{0}}ds,\qquad
\text{and}\qquad {\bm\psi(t)}_\alpha=\Big(\sum_{k=1}^\infty{(-1)^{k-1}(k-1)! }{{\bf d}(t)^{* k}}\Big)_\alpha,
\end{align*}
for ${\bf d}(t)_{0}=0$ and ${\bf d}(t)_\alpha:={{\bf c}(t)_\alpha}/{{\bf c}(t)_{0}}$, for $|\alpha|>0$. 
\end{proposition}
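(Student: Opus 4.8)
The plan is to identify the proposed $\ppsi(t)$ as the logarithm, in the convolution algebra $(\Scal^*,\ast)$, of the linear flow $\c(t)$, and then to turn the linear ODE for $\c$ into the Riccati ODE for $\ppsi$ by cancelling the $\ast$-invertible element $\c(t)$. The first ingredient I would isolate is the algebraic link between $L$ and $R$. Fix $\mathbf{w}\in\Vcal^*$ and put $\v:=\bm{\exp}^*(\mathbf{w})$, so that $h_\v=\exp(h_{\mathbf{w}})$ and $\v_0=\exp(\mathbf{w}_0)\neq0$. Applying $\mathcal A$ to $\exp(h_{\mathbf{w}})=h_\v$ on $S$ and comparing the holomorphic representation $\mathcal Ah_\v=h_{L(\v)}$ (valid since $h_\v=\exp(h_{\mathbf{w}})$ lies in the holomorphic class $\exp(\Vcal)$) with the affine-holomorphic one $\mathcal A\exp(h_{\mathbf{w}})=\exp(h_{\mathbf{w}})h_{R(\mathbf{w})}=h_{\v\ast R(\mathbf{w})}$ yields
$$ L(\bm{\exp}^*(\mathbf{w}))=\bm{\exp}^*(\mathbf{w})\ast R(\mathbf{w}). $$
Since $\v_0\neq0$, the element $\v$ is invertible for $\ast$ (near $0$ the function $1/h_\v$ is holomorphic, so its coefficients define $\v^{\ast(-1)}\in\Scal^*$), whence $R(\mathbf{w})=\v^{\ast(-1)}\ast L(\v)$.

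Next I would take $\ppsi(t)$ as in the statement and check that it is exactly $\log\c(t)$, i.e. $\bm{\exp}^*(\ppsi(t))=\c(t)$. Writing $\mathbf{d}(t)$ as in the statement (so that $h_{\mathbf{d}(t)}=h_{\c(t)}/\c(t)_0-1$ vanishes at $0$), the scalar part gives $\ppsi(t)_0=\log\c(t)_0$, and the expansion of $\log(1+h_{\mathbf{d}(t)})$ through the logarithm power series, translated into the product $\ast$ via $h_{\mathbf{d}(t)^{\ast k}}=(h_{\mathbf{d}(t)})^k$, reproduces the displayed series for $\ppsi(t)_\alpha$, $|\alpha|>0$. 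The scalar component matches the claimed integral form: from $\partial_t\c(t)_0=L(\c(t))_0$ one gets $\partial_t\log\c(t)_0=L(\c(t))_0/\c(t)_0$, and $\ppsi(0)_0=\log\v_0=\u_0$ fixes the constant of integration.

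With these two steps the Riccati ODE follows essentially formally. Differentiating $\bm{\exp}^*(\ppsi(t))=\c(t)$ in $t$ and using the derivative rule for $\bm{\exp}^*$ gives $\bm{\exp}^*(\ppsi(t))\ast\partial_t\ppsi(t)=\partial_t\c(t)=L(\c(t))$, that is $\c(t)\ast\partial_t\ppsi(t)=L(\c(t))$. Invoking the $L$–$R$ relation of the first step with $\mathbf{w}=\ppsi(t)$ (legitimate because $\c(t)=\bm{\exp}^*(\ppsi(t))$) rewrites the right-hand side as $\c(t)\ast R(\ppsi(t))$, and cancelling the $\ast$-invertible $\c(t)$ (here $\c(t)_0\neq0$ is used again) leaves $\partial_t\ppsi(t)=R(\ppsi(t))$. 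Integrating with $\ppsi(0)=\u$ recovers \eqref{eq:ODE sequence space affine}, while evaluating at $x\in S$ recovers the pointwise form \eqref{eq: ODE Riccati pointwise}.

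I expect the genuine work to lie not in this final cancellation but in the analytic bookkeeping supporting it. One must verify that $\log\c(t)$ is again a convergent power series on a polydisc meeting $S$ (this is exactly where $\c(t)_0\neq0$ enters, guaranteeing $h_{\c(t)}(0)\neq0$ and convergence of $\log(1+h_{\mathbf{d}(t)})$ near $0$), that the resulting $\ppsi(t)$ lands in $\Vcal^*$ so that $R(\ppsi(t))$ is defined, and that $t\mapsto\ppsi(t)$ is differentiable with the interchange of $\partial_t$ and the series justified, so that the derivative rule for $\bm{\exp}^*$ holds componentwise. The nonuniqueness of the power-series representation of $\mathcal Ah$ on $S$ must also be accommodated, but as recalled in Remark~\ref{rem2}\ref{rem2_2} only the existence of one admissible choice of $L$ and $R$ is needed, so this causes no difficulty.
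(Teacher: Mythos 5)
A preliminary remark is needed: the paper does not prove this proposition at all --- it is restated from Proposition 4.36 of \cite{CST:23} --- so your proposal can only be measured against the construction behind that citation, which is indeed the one you describe: take $\ppsi(t)$ to be the $\ast$-logarithm of the linear flow $\c(t)$, differentiate $\bm{\exp}^*(\ppsi(t))=\c(t)$, intertwine $L$ and $R$, and cancel the invertible $\c(t)$. The strategy is the right one, but your step identifying the displayed series with the logarithm is false under this paper's own conventions, and the point is not cosmetic. Since $h_{\mathbf{d}(t)^{\ast k}}=(h_{\mathbf{d}(t)})^k$ with the unnormalized $\ast$-powers of Section~\ref{sec 214}, the coefficient sequence of $\log(1+h_{\mathbf{d}(t)})$ is $\sum_{k\geq1}\tfrac{(-1)^{k-1}}{k}\,\mathbf{d}(t)^{\ast k}$, which differs from the displayed $\sum_{k\geq1}(-1)^{k-1}(k-1)!\,\mathbf{d}(t)^{\ast k}$ by a factor $k!$ in each summand. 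Concretely, for $d=1$ and $\u=(0,u,0,0,\dots)$ one has $\v=\bm{\exp}^*(\u)=(u^n)_{n\geq 0}$, hence $\mathbf{d}(0)_n=u^n$ for $n\geq1$, and the displayed series gives $\ppsi(0)_2=\mathbf{d}(0)_2-1!\,(\mathbf{d}(0)^{\ast 2})_2=u^2-2u^2=-u^2\neq 0=\u_2$; thus the displayed $\ppsi$ already violates the initial condition $\ppsi(0)=\u$ that \eqref{eq:ODE sequence space affine} forces, whereas the $\tfrac{(-1)^{k-1}}{k}$ coefficients give $u^2-\tfrac12\cdot 2u^2=0$, as they must. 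So the stated constants are consistent only if $\mathbf{d}(t)^{\ast k}$ is read as the divided power $\tfrac1{k!}\,\mathbf{d}(t)\ast\cdots\ast\mathbf{d}(t)$ (presumably the normalization of \cite{CST:23}); with the present paper's definition of $\ast$-powers, what your argument actually proves is the corrected formula with $\tfrac{(-1)^{k-1}}{k}$. You must either flag this and prove that version, or your claim that the log expansion ``reproduces the displayed series'' is simply false.

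The second genuine gap is the intertwining $L(\bm{\exp}^*(\mathbf{w}))=\bm{\exp}^*(\mathbf{w})\ast R(\mathbf{w})$, which your cancellation step needs as an identity of \emph{sequences}. Comparing $\mathcal{A}h_\v=h_{L(\v)}|_S$ with $\mathcal{A}\exp(h_{\mathbf{w}})=\exp(h_{\mathbf{w}})h_{R(\mathbf{w})}|_S$ only yields that $h_{L(\v)}$ and $h_{\v\ast R(\mathbf{w})}$ agree \emph{on} $S$; since power-series representations of functions on $S$ are not unique (this is precisely the caveat of Remark~\ref{rem2}\ref{rem2_2}), equality on $S$ does not give equality of coefficients. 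Moreover, this route presupposes that $\exp(h_{\mathbf{w}})$ lies in the holomorphic class and $h_{\mathbf{w}}$ in the affine-holomorphic class, neither of which is a hypothesis --- the proposition never mentions a process and is a purely deterministic statement about the two ODEs. The repair is to prove the intertwining algebraically from the explicit formulas \eqref{eq:OpLu} and \eqref{eq:R1nice}, using the rules of Section~\ref{sec 214}: $(\bm{\exp}^*(\mathbf{w}))^{(\beta)}$ expressed through $\bm{\exp}^*(\mathbf{w})\ast(\cdots)$, the identity $\bm{\exp}^*(\mathbf{a})\ast\bm{\exp}^*(\mathbf{b})=\bm{\exp}^*(\mathbf{a}+\mathbf{b})$, and $\bm{\exp}^*(\mathbf{w})\circ^s\underline{\bm{j}}(y)=\bm{\exp}^*\big(\mathbf{w}\circ^s\underline{\bm{j}}(y)\big)$; alternatively, one may \emph{define} $R(\mathbf{w}):=\bm{\exp}^*(-\mathbf{w})\ast L(\bm{\exp}^*(\mathbf{w}))$ and check it is an admissible choice in the sense of the paper. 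Finally, your appeal to holomorphy of $1/h_\v$ to place $\ast$-inverses in $\Scal^*$ is unjustified at positive times: only $\c(t)_0\neq 0$ is assumed, and $h_{\c(t)}$ may vanish inside $P^d_{R(S)}(0)$, so its reciprocal need not converge there. It is also unnecessary: to cancel $\c(t)$ from $\c(t)\ast\partial_t\ppsi(t)=\c(t)\ast R(\ppsi(t))$ one only needs that a sequence with nonzero constant term is not a zero divisor for $\ast$, which follows by induction on $|\alpha|$ with no convergence statement at all.
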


As a direct application of Proposition \ref{prop: existence riccati}, 
Corollary~\ref{coro: levy1 bounded support}, and Proposition \ref{prop: affine}, the next results follow directly.

\begin{corollary}\label{coro: levy affine}
     Fix $S\subseteq\Rset$ and let  $X=(X_t)_{t\in[0,T]}$ be an $S$-valued Lévy process with characteristics $(b,a,F)$ and initial condition $X_0=x_0$. Assume that $F$ has bounded support and fix $\u \in \SS$ such that $\exp(h_\u)$ satisfies the conditions of Corollary \ref{prop: affine}. Then, there exists a solution of the Riccati equation \eqref{eq:ODE sequence space affine} with initial condition $\ppsi(0)=\u$.
\end{corollary}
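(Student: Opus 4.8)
The plan is to reduce the claim to Proposition~\ref{prop: existence riccati}, whose hypotheses require a solution of the linear ODE \eqref{eq:ODE sequence space holomorphic} started at the coefficients of $\exp(h_\u)$ and having non-vanishing zeroth component. First I would fix $\v\in\SS$ with $h_\v=\exp(h_\u)$, so that $h_\v$ is exactly the function whose expectation Proposition~\ref{prop: affine} concerns. A Lévy process is a degenerate affine process in the sense used there: its kernel is $K(x,d\xi)=F(d\xi)=\nu_0(d\xi)+x\nu_1(d\xi)$ with $\nu_0:=F$ of bounded support and $\nu_1:=0$, so the structural assumption of Proposition~\ref{prop: affine} holds. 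Since by hypothesis $\exp(h_\u)=h_\v$ satisfies the conditions of that proposition (equivalently, one may invoke Corollary~\ref{coro: levy1 bounded support}, tailored to Lévy processes with $F$ of bounded support), the construction in its proof, via Lemma~\ref{lemma:AP=PA}, supplies a $\Vcal^*$-valued solution $(\c(t))_{t\in[0,T]}$ of \eqref{eq:ODE sequence space holomorphic} (equivalently \eqref{eq: ODE pointwise}) with $\c(0)=\v$ and with the semigroup representation
\[
h_{\c(t)}(x)=\E[\exp(h_\u(X_t))\mid X_0=x],\qquad x\in S,\ t\in[0,T].
\]

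Next I would verify the non-vanishing condition $\c(t)_0\neq0$ for all $t\in[0,T]$, which is the genuine obstacle. Since $h_{\c(t)}(z)=\sum_\alpha \c(t)_\alpha z^\alpha/\alpha!$ collapses to its constant term at $z=0$, one has $\c(t)_0=h_{\c(t)}(0)$. As $X$ is a Lévy process its semigroup acts by translation, $P_tf(x)=\E[f(x+X_t^0)]$ with $X^0$ the process started at $0$, so under the present integrability hypotheses the map $x\mapsto \E[\exp(h_\u(x+X_t^0))]$ extends to an entire function which agrees with $h_{\c(t)}$ on $S$; because $S$ has an accumulation point in $\R$, the identity theorem (Proposition~\ref{propclass}\ref{class:it identity}) forces equality everywhere, whence
\[
\c(t)_0=\E[\exp(h_\u(X_t^0))].
\]
When $h_\u$ is real-valued on $\R$ the integrand is strictly positive and this expectation is $>0$, giving the desired non-degeneracy in the case relevant for applications (Laplace-type transforms). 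In the general complex case one must instead exclude zeros of $t\mapsto\c(t)_0$, e.g.\ via the affine-transform expression $\c(t)_0=\int_{-\tau}^\tau \exp(\phi(t,\varepsilon+iu))g(u)\,du$, which is continuous in $t$ and equals $e^{\u_0}\neq0$ at $t=0$; controlling this uniformly on $[0,T]$ is where the real work lies.

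Finally, with $(\c(t))_{t\in[0,T]}$ a solution of \eqref{eq:ODE sequence space holomorphic} started at $\v$ (the coefficients of $\exp(h_\u)$) and satisfying $\c(t)_0\neq0$ throughout $[0,T]$, Proposition~\ref{prop: existence riccati} applies directly and produces a solution $(\ppsi(t))_{t\in[0,T]}$ of the Riccati equation \eqref{eq:ODE sequence space affine} with initial condition $\ppsi(0)=\u$, given explicitly there in terms of $\c$. This is exactly the asserted existence statement, so the corollary follows.
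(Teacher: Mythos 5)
Your proposal takes the same route as the paper, which offers no argument for this corollary beyond declaring it a direct application of Proposition~\ref{prop: existence riccati}, Corollary~\ref{coro: levy1 bounded support} and Proposition~\ref{prop: affine}: produce a $\Vcal^*$-valued solution $(\c(t))_{t\in[0,T]}$ of the linear equation started at $\v$ with $h_\v=\exp(h_\u)$, then feed it into Proposition~\ref{prop: existence riccati}. Your first and last paragraphs are exactly this, including the observation that a L\'evy process with bounded-support $F$ is the degenerate affine case $\nu_0=F$, $\nu_1=0$.

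Where you go beyond the paper is the middle paragraph, and your instinct there is correct: $\c(t)_0\neq0$ is a stated hypothesis of Proposition~\ref{prop: existence riccati} which the paper never verifies for this corollary, so what you call ``the genuine obstacle'' is an implicit assumption of the paper rather than a defect of your argument. Your identification $\c(t)_0=\E[\exp(h_\u(L_t))]$, with $L$ the L\'evy process started at $0$, together with positivity for real-valued $h_\u$, is precisely what makes the corollary rigorous in the regime the paper actually uses it: the remark following the corollary restricts to $\u=(\u_0,0,\u_2,0,\ldots,0,\u_{2n})$ with $\u_{2k}\in\R_-$, i.e.\ real $h_\u$. Two refinements to your middle step. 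First, the identity-theorem detour is unnecessary, and its accumulation-point hypothesis can even fail (take a Poisson process, so that $S=\N_0$); it is cleaner to take $\c(t)$ to be the Taylor coefficients of the entire map $z\mapsto\E[\exp(h_\u(z+L_t))]$, which is the solution that the proofs of Corollary~\ref{coro: levy1 bounded support} and Lemma~\ref{lemma:AP=PA} construct in the first place, so that $\c(t)_0=\E[\exp(h_\u(L_t))]$ holds by construction. Second, for genuinely complex $\u$ nothing in the hypotheses prevents $t\mapsto\E[\exp(h_\u(L_t))]$ from hitting zero before $T$ (the familiar finite-time blow-up of Riccati equations), so in that generality the conclusion must be read as conditional on non-vanishing; this is a limitation of the paper's statement that your write-up correctly exposes, not a gap relative to what the paper itself proves.
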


\begin{corollary}
    Fix $S\subseteq\Rset$ and let  $X=(X_t)_{t\in[0,T]}$ be an $S$-valued affine processes that satisfied the conditions of Proposition \ref{prop: affine}. If for some $\u \in \SS$  $\exp(h_\u)$ satisfies the conditions of Corollary \ref{prop: affine}, then there exists a solution of the Riccati equation \eqref{eq:ODE sequence space affine} with initial condition $\ppsi(0)=\u$.
\end{corollary}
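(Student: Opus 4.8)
The plan is to reduce the statement directly to Proposition~\ref{prop: existence riccati}, whose only genuinely nontrivial hypothesis is the non-vanishing of the zeroth coordinate of the associated linear-ODE solution. First I would set $\v:=\bm{\exp}^*(\u)\in\SS$, so that $h_\v=\exp(h_\u)$. Since by assumption $\exp(h_\u)$ satisfies the conditions of Proposition~\ref{prop: affine}, part~(iii) of that proposition applies to the function $h_\v$ and yields a $\Vcal^*$-valued solution $(\c(t))_{t\in[0,T]}$ of the linear ODE \eqref{eq:ODE sequence space holomorphic} with $\c(0)=\v$, satisfying the pointwise relation \eqref{eq: ODE pointwise} together with the holomorphic formula $h_{\c(t)}(x)=\E_x[\exp(h_\u(X_t))]$ for $x\in S$. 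This $(\c(t))$ is exactly the input required by Proposition~\ref{prop: existence riccati} with initial data $\c(0)=\v$.

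The crux is then to verify that $\c(t)_0\neq0$ for every $t\in[0,T]$. Here I would exploit the explicit affine structure: writing $h_\v(x)=\int_{-\tau}^{\tau}\exp((\varepsilon+iu)x)g(u)\,du$ and inserting the affine transform $\E_x[\exp((\varepsilon+iu)X_t)]=\exp(\phi(t,\varepsilon+iu)+\psi(t,\varepsilon+iu)x)$ provided by Proposition~\ref{prop: affine}, Fubini's theorem gives
\[
h_{\c(t)}(z)=\int_{-\tau}^{\tau}\exp\big(\phi(t,\varepsilon+iu)+\psi(t,\varepsilon+iu)z\big)g(u)\,du ,
\]
valid throughout the polydisc, so that reading off the constant coefficient yields $\c(t)_0=h_{\c(t)}(0)=\int_{-\tau}^{\tau}\exp(\phi(t,\varepsilon+iu))g(u)\,du$. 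At $t=0$ this equals $\int_{-\tau}^{\tau}g(u)\,du=\exp(h_\u(0))\neq0$, and each integrand $\exp(\phi(t,\cdot))$ is non-vanishing because the scalar affine exponents $\phi,\psi$ solve the classical (generalized Riccati) system and do not explode on $[0,T]$ under the bounded-support hypothesis on $K$. The delicate point, and the main obstacle, is precisely to pass from the non-vanishing of each integrand to the non-vanishing of the integral $\c(t)_0$: I would handle this by a continuity/connectedness argument along $t$, noting that $t\mapsto\c(t)_0$ is continuous, that $\c(0)_0\neq0$, and that on any maximal interval where $\c(\cdot)_0\neq0$ Proposition~\ref{prop: existence riccati} already forces $\c(t)_0=\exp(h_{\ppsi(t)}(0))$, which stays away from $0$ as long as $\ppsi$ remains bounded; ruling out blow-up of $\ppsi$ at a first zero is where the real work lies. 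In the frequent special case where $h_\u$ is real on $\R$ (so $\exp(h_\u)>0$ there) this step is immediate, since then $\c(t)_0=\E_0[\exp(h_\u(X_t))]>0$ by positivity.

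With $\c(t)_0\neq0$ in hand for all $t\in[0,T]$, Proposition~\ref{prop: existence riccati} applies verbatim and produces the $\Vcal^*$-valued solution $(\ppsi(t))_{t\in[0,T]}$ of the Riccati ODE \eqref{eq:ODE sequence space affine} with $\ppsi(0)=\u$, given there by the logarithmic transform of $\c$; this is the asserted existence. The identical scheme, with Corollary~\ref{coro: levy1 bounded support} replacing Proposition~\ref{prop: affine}, proves the companion statement Corollary~\ref{coro: levy affine} for Lévy processes, where the non-vanishing of $\c(t)_0=\E_0[\exp(h_\u(X_t))]$ is again the only substantive step and is typically secured by positivity of $\exp(h_\u)$ on $\R$. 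Thus everything beyond the verification $\c(t)_0\neq0$ is a direct citation of the three quoted results.
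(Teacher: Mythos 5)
Your overall reduction is exactly the paper's: the paper offers no argument for this corollary beyond the sentence that it is ``a direct application of Proposition~\ref{prop: existence riccati}, Corollary~\ref{coro: levy1 bounded support}, and Proposition~\ref{prop: affine}'', and your two steps --- apply part~(iii) of Proposition~\ref{prop: affine} to $h_\v=\exp(h_\u)$ to obtain $(\c(t))_{t\in[0,T]}$, then feed this into Proposition~\ref{prop: existence riccati} --- are precisely that application. You are in fact more scrupulous than the source: the hypothesis $\c(t)_0\neq 0$ of Proposition~\ref{prop: existence riccati} is indeed the one substantive condition that the paper's citation leaves unverified (as is the smaller mismatch that Proposition~\ref{prop: affine} delivers a solution of the pointwise ODE \eqref{eq: ODE pointwise} while Proposition~\ref{prop: existence riccati} asks for a solution of the sequence-valued ODE \eqref{eq:ODE sequence space holomorphic}; both you and the paper pass over this).

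Where your write-up falls short of a proof is exactly where you say it does, and your proposed repair cannot work as sketched: on a maximal interval where $\c(\cdot)_0\neq0$ the construction of Proposition~\ref{prop: existence riccati} gives $\c(t)_0=\exp(\ppsi(t)_0)$ with $\ppsi(t)_0=\u_0+\int_0^t L(\c(s))_0/\c(s)_0\,ds$, and a first zero of $\c(\cdot)_0$ is perfectly compatible with $\Re\,\ppsi(t)_0\to-\infty$ there; nothing in the hypotheses supplies the a priori bound on $\ppsi$ that your continuity argument needs. What actually settles the matter is not an ODE estimate but the rigidity of the hypotheses themselves, which you did not exploit. The corollary requires the zero-free function $\exp(h_\u)$ to equal $\int_{-\tau}^{\tau}e^{(\varepsilon+iu)x}g(u)\,du$ with $g\in L^1([-\tau,\tau])$. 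Writing this as $e^{\varepsilon x}F(x)$, the function $F$ is entire of exponential type, bounded on $\R$, tends to $0$ along $\R$ by Riemann--Lebesgue, and is zero-free; Hadamard's factorization then forces $F(z)=e^{az+b}$ with $\Re\,a=0$, whose modulus is a nonzero constant on $\R$ --- contradicting the decay. Hence either no $\u$ satisfies the hypotheses (the corollary holds vacuously), or, under the only non-degenerate reading (allowing atoms in $g\,du$, i.e.\ pure exponentials), $h_\u$ is affine-linear, say $h_\u(x)=ax+b$, and then $\c(t)_0=e^{b}\exp(\phi(t,a))\neq0$ is immediate from the affine transform formula assumed in Proposition~\ref{prop: affine}, with no continuity argument needed. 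So your structure and your identification of the missing step are correct, but the step is closed by this complex-analytic observation (or, for real-valued $h_\u$, by the positivity argument in your last paragraph, which is the one that genuinely matters for the companion Corollary~\ref{coro: levy affine} and the examples of Remark~\ref{remarkcarino}), not by the blow-up analysis you propose.
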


\begin{remark}
\begin{enumerate}
\item  It is important to observe that for Theorem~\ref{th: affine formula} to holds we would still need to prove  that the state space is contained in the domain of convergence of $h_{\ppsi(t)}$ and that conditions \eqref{eq: ODE Riccati pointwise} and~\ref{affine formula iii} are satisfied.

    \item    By Remark~\ref{remarkcarino}\ref{remarkcarinoii}, Corollary \ref{coro: levy affine} applies to coefficients of the form  $$\u=(\u_0,0,\u_2,0,\ldots,0,\u_{2n}),$$ with $\u_{2k}\in \R_-$ and $n\in \Nset$ .

   \item The existence of a solution to the Riccati ODE \eqref{eq:ODE sequence space affine} has been proven by \cite{ALL:24} for 
\begin{align*}
    dX^{(1)}_t&=g_0(t)p(X_t^{(2)})(\rho dW_t+\sqrt{1-\rho^2}dW_t^\perp)
    -\frac 1 2 g_0(t)^2p(X_t^{(2)})^2 dt, 
    \\
    dX^{(2)}_t&=(a+bX_t^{(2)})dt+cdW_t,
\end{align*}
where $a,b,c\in \R$ with $c\neq 0$, $p$ is a power series satisfying some technical conditions and $g_0:[0,T]\to \R$. For $\alpha=0$, $\beta=\alpha \e^{-1}$ and $c=\e^\alpha$,
this covers in particular the case of the \emph{quintic OU volatility model}, where $p$ is a polynomial of degree 5 and $g_0(t)=\xi_0/\sqrt{\E[p(X_t)^2]}$, and the \emph{one-factor Bergomi model}, where 
$$p(x)=\exp\Big(\frac {\eta p(X_t)} 2\Big)\qquad \text{and}\qquad g_0(t)=\xi_0\exp\Big(-\frac{\eta^2\E[p(X_t)^2]}4\Big).$$
The existence has been proven (in particular) for initial conditions $\u$ such that $h_\u(x)=g_1x_1+g_2x_2$  for $g_1,g_2$ with vanishing and nonpositive real part, respectively. To cast it in the current setting we should choose $g_0$ constant. To be able to conclude the results of Theorem~\ref{th: affine formula} we would still need to prove conditions \eqref{eq: ODE Riccati pointwise},~\ref{affine formula ii} and~\ref{affine formula iii}.
\end{enumerate}
\end{remark}

\appendix
\section{A sufficient condition for interchanging summation and integration}

 The next lemma states that continuity of the map \eqref{eq:continu} is a sufficient condition for the term-by-term integration of a convergent power series. Recall that integrals of (vectors of) sequence-valued maps are computed componentwise (see Section~\ref{sec212}).
\begin{lemma}\label{prop:Morerageneral}
Let $F$ be a non-negative measure on a measurable space $E$, $R\in(0,\infty]^d$ and for some $\vare>0$, let
$ f: P^d_{R+\vare}(0)\times E\rightarrow\mathbb{C}^d$ be a map such that 
\begin{enumerate}
\item\label{it:continu} for all $z\in P^d_{R+\vare}(0)$,
    $f^j(z,\cdot)\in L^1(E,  F)$ and the map
    \begin{align}\label{eq:continu}
       P^d_{R+\vare}(0) \ni z\longmapsto f^j(z,\cdot)\in L^1(E,  F)
       \end{align}   is continuous for each $j$;

    \item\label{eq:holo} for all fixed $ y\in E$ it holds $f(\cdot, y)\in H(P^d_{R+\vare}(0),\C^d)$.
    \end{enumerate}
     Let $\underline{\mathbf{f}}( y)\in ({\mathcal{P}^d_R(0)}^*)^d$ denote the coefficients determining the power series representation of  $f(\cdot, y)$ on $P^d_R(0)$. 
     Then
     \begin{equation}\label{eqn3}
         \int_{E} f(\cdot, y) \ F(d y)\in H(P^d_{R+\vare}(0),\C^d)
     \end{equation} 
     and $\int_{E}\underline{\mathbf{f}}( y) \ F(d y) $ are the coefficients determining its power series representation.
\end{lemma}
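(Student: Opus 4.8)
The plan is to treat the statement componentwise, so that it suffices to prove the scalar version: given $f\colon P^d_{R+\vare}(0)\times E\to\C$ with $f(z,\cdot)\in L^1(E,F)$ for every $z$, with $z\mapsto f(z,\cdot)\in L^1(E,F)$ continuous, and with $f(\cdot,y)\in H(P^d_{R+\vare}(0))$ for every $y$, the function $g(z):=\int_E f(z,y)\,F(dy)$ is holomorphic on $P^d_{R+\vare}(0)$ and its Taylor coefficients at $0$ are $\int_E(\underline{\mathbf f}(y))_\alpha\,F(dy)$. First I would record that $g$ is well defined (by the $L^1$ assumption) and continuous: for $z,z'\in P^d_{R+\vare}(0)$ one has $|g(z)-g(z')|\le\|f(z,\cdot)-f(z',\cdot)\|_{L^1(E,F)}$, which tends to $0$ as $z'\to z$ by the assumed $L^1$-continuity~\ref{it:continu}.

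The second step is to establish holomorphy of $g$ by means of Morera's theorem (Proposition~\ref{propclass}\ref{class:it5}), applied one complex variable at a time. Fixing an index $k$ and all coordinates except $z_k$, and choosing any triangle (closed contour) $\gamma$ lying in the corresponding disc, I would interchange the contour integral and the integral over $E$,
$$
\oint_\gamma g(z)\,dz_k=\oint_\gamma\!\int_E f(z,y)\,F(dy)\,dz_k=\int_E\Big(\oint_\gamma f(z,y)\,dz_k\Big)F(dy)=0,
$$
where the inner integral vanishes for each fixed $y$ because $f(\cdot,y)$ is holomorphic (Cauchy--Goursat), and the Fubini interchange is legitimate since $\gamma$ is compact and $z_k\mapsto\|f(z,\cdot)\|_{L^1(E,F)}$ is continuous, hence bounded on $\gamma$, so that $\int_\gamma\int_E|f(z,y)|\,F(dy)\,|dz_k|<\infty$. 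Together with the continuity of $g$, Morera's theorem shows that $g$ is holomorphic in each variable separately; by Osgood's lemma a continuous function that is separately holomorphic is jointly holomorphic, giving $g\in H(P^d_{R+\vare}(0))$ and hence \eqref{eqn3}.

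For the third step, the identification of the coefficients, I would use the Cauchy integral formula for the derivatives of a holomorphic function on a polydisc. Picking a polyradius $r$ with $T^d_r(0)\subset P^d_{R+\vare}(0)$, one has for every multi-index $\alpha$
$$
D^\alpha g(0)=\frac{\alpha!}{(2\pi i)^d}\oint_{T^d_r(0)}\frac{g(\zeta)}{\zeta_1^{\alpha_1+1}\cdots\zeta_d^{\alpha_d+1}}\,d\zeta .
$$
Inserting $g(\zeta)=\int_E f(\zeta,y)\,F(dy)$ and interchanging the two integrations (again justified by compactness of $T^d_r(0)$ and boundedness of $\zeta\mapsto\|f(\zeta,\cdot)\|_{L^1(E,F)}$ there) yields $D^\alpha g(0)=\int_E D^\alpha f(0,y)\,F(dy)=\int_E(\underline{\mathbf f}(y))_\alpha\,F(dy)$, the last equality being the fact that the coefficient $(\underline{\mathbf f}(y))_\alpha$ equals $D^\alpha f(0,y)$ in the convention $h_\u(z)=\sum_\alpha\u_\alpha z^\alpha/\alpha!$ (Proposition~\ref{propclass}\ref{class:it2}). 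Since $g\in H(P^d_{R+\vare}(0))$ coincides on $P^d_R(0)$ with the convergent power series having these coefficients, this shows both that $\int_E\underline{\mathbf f}(y)\,F(dy)\in(\mathcal P^d_R(0)^*)^d$ and that it determines the power series representation of \eqref{eqn3}.

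The main obstacle is the repeated justification of Fubini's theorem for the interchange of the contour (resp.\ polytorus) integral with the integral against $F$; this is where the hypotheses enter in an essential way. The point is that the $L^1$-continuity assumption~\ref{it:continu} makes $z\mapsto\|f(z,\cdot)\|_{L^1(E,F)}$ continuous, hence bounded on the relevant compact contours, which is exactly the absolute-integrability bound needed to apply Fubini; the holomorphy assumption~\ref{eq:holo} then supplies the vanishing of the inner contour integrals via Cauchy--Goursat.
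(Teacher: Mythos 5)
Your proof is correct and follows essentially the same route as the paper's: holomorphy in each variable via Morera and Goursat, with the $L^1$-continuity hypothesis supplying exactly the Fubini bound on compact contours, then joint holomorphy (the paper invokes Hartogs' theorem where you use Osgood's lemma, an immaterial difference since you also establish continuity of $g$), and finally the Cauchy integral formula on a polytorus combined with the same Fubini argument to identify the coefficients. There is no gap.
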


\begin{proof}
Fix $i,j\in\{1,\ldots,d\}$, $a\in P^d_{R+\vare}(0)$, and $\delta>0$ such that $P^d_{\delta}(a)\subset P^d_{R+\vare}(0)$. Consider the functions $f^j_i:P^1_\delta (a_i)\rightarrow\C$ and $g_i: P^1_\delta (a_i)\rightarrow\C$ given by
    \begin{align*}
    f^j_i(z_i,y)&:=f^j(a_1,\dots,a_{i-1},z_i,a_{i+1},\dots,a_d, y),\\
        g_i^j(z_i)&:= \int_{E} f^j_i(z_i, y)  
 F(d y).
    \end{align*}

    By Hartogs' theorem (see Proposition~\ref{propclass}\ref{class:it6}) in order to show \eqref{eqn3} it suffices to show that $g_i^j\in H(P^1_{R_i+\vare}(0),\C)$. 
By Morera's theorem (see Proposition~\ref{propclass}\ref{class:it5}) this follows by showing that $g_i^j$ is continuous on $ P^1_\delta (a_i)$ and 
$$
    \int_{\triangle}g^j_i(z_i) dz_i =0,
$$
for every triangle $\triangle\subset P^1_\delta (a_i)$.
Observe that condition~\ref{it:continu} implies continuity of the map
$\int_{E} f^j(\cdot, y)   F(d y)$ on $P^d_{R+\vare}(0)$ and thus of $g^j_i$ on $ P^1_\delta (a_i)$.
    Next, fix a  triangle $\triangle\subset P^1_\delta (a_i)$ and note that by ~\ref{it:continu} we get
    \begin{align*}
        &\int_{\triangle}\int_{E}|f^j_i(z_i, y)|  F(d y) dz_i
        =\int_{\triangle}\|f^j_i(z_i, \cdot)\|_{L_1} dz_i<\infty.
    \end{align*}
    Therefore, by Fubini's theorem 
$
\int_{\triangle}\int_{E}f^j_i(z_i, y)  F(d y) dz_i
        =\int_{E}\int_{\triangle}f^j_i(z_i, y) dz_i   F(d y).
$
    Since for each $y\in E$ the map
$f^j_i(\cdot,y)$
    is holomorphic on $P^1_\delta (a_i)$ by~\ref{eq:holo}, by Goursat's theorem (see Proposition~\ref{propclass}\ref{class:it4}) we can conclude that
    \begin{align*}
 \int_{\triangle}g^j_i(z_i) dz_i=\int_{E}\int_{\triangle}f^j_i(z_i, y) dz_i   F(d y)=0.
    \end{align*} 
Next, by the Taylor expansion (see Proposition~\ref{propclass}\ref{class:it3}), for all $z\in P^d_{R}(0)$ it holds
    \begin{align*}
        \int_{E}f^j(z, y)\  F(d y)=\sum_{\alpha\in \N^d_0}D^\alpha \left(\int_{E}f^j(\cdot, y)\  F(d y)\right) (0)\ \frac{z^\alpha}{\alpha!}.
    \end{align*}
    The claimed representation of the coefficients of $\int_{E} f(\cdot, y)  F(d y)$ can thus be proven verifying that 
    \begin{equation}\label{eqn6}
        D^\alpha \left(\int_{E}f^j(\cdot, y)\  F(d y)\right) (0)=\int_E \underline{\mathbf{f}}(y)^j_\alpha F(dy).
    \end{equation}
    By the Cauchy integral formula (see Proposition~\ref{propclass}\ref{class:it1}), fixing $0<M<R$   
    \begin{align*}
     D^\alpha \left(\int_{E}f^j(\cdot, y)\  F(d y)\right)(0)=\frac{\alpha!}{(2\pi i)^d}\int_{T^d_{M}(0)} \int_{E}f^j(z, y)\ F(d y) \frac{1}{z^{\alpha +1}}  \ dz,
    \end{align*}
    where $T^d_{M}(0)$ denotes the polytorus centered at $0\in \C^d$ and with polyradius $M$ in the sense of \eqref{eqn5}.
    Since by condition~\ref{it:continu} 
    \begin{align*}
        \int_{T^d_{M}(0)}\int_{E}|f^j(z, y)|\ F(d y) \frac{1}{|z|^{\alpha +1}}  \ dz<\infty,
    \end{align*}
 by Fubini's theorem we get
       \begin{align*}
     D^\alpha \left(\int_{E}f^j(\cdot , y)\  F(d y)\right) (0)= \int_{E} \frac{\alpha!}{(2\pi i)^d}\int_{T^d_{M}(0)} f^j(z, y)\  \frac{1}{z^{\alpha +1}}  \ dz \ F(d y).
    \end{align*}
    By condition~\ref{eq:holo}, a further application of the Cauchy integral formula yields
    $$\frac{\alpha!}{(2\pi i)^d}\int_{T^d_{M}(0)} f^j(z, y)\  \frac{1}{z^{\alpha +1}}   dz=D^\alpha f^j(\cdot, y)(0)=\underline{\mathbf{f}}(y)^j_\alpha$$
    and \eqref{eqn6} follows.  
\end{proof}
    
\begin{remark}

     An inspection of the proof shows that the continuity assumption in~\ref{it:continu} could be replaced by requiring the map
$ \int_E f^j(\cdot, y)   F(d y)
$
      to be continuous on $P^d_{R+\vare}(0)$ for each $j$,  and the map 
      $
           \int_E \|f(\cdot, y)\|  F(d y)
      $
      to be in $L^1_{loc}(P^d_R(0))$.
\end{remark}

\section{Proofs of Section~\ref{sec:holomorphic_jd}}
Before providing the proofs of Section~\ref{sec: section 2.1}, we recall that by Definition~\ref{def:holomorphic process}, to prove that $X$ is an $S$-valued $\Vcal$-holomorphic process we need to show that for each  $f\in\Vcal$ it holds $f\in\Dcal(\Acal)$,  $\mathcal{A}f\in \mathcal{O}(S)$, and the process $N^f$ introduced in Equation \eqref{eq:localmart} is a local martingale.

\subsection{Proof of Theorem~\ref{prop: holo generalkernel}}\label{proofpropgkernel}
Fix $h\in \Vcal$, $z\in {S_\vare}$, and $\xi\in \textup{supp}(K_\vare(z,\cdot))$. Observe that since  $G_j>|z_j|+|\xi_j|$, there exists $R>0$ such that $z+\xi \in P^d_R(z)$ and $\overline {P^d_R(z)}\subset P^d_G(0)$. Thus, by Proposition~\ref{propclass}\ref{class:it3} (see also Remark~\ref{remarkholo}),
$$h(z+\xi)-h(z)-\nabla h(z)^\top\xi=\sum_{|\beta|\geq 2}\frac{1}{\beta!}h^{(\beta)}(z) \xi ^\beta.$$
Furthermore, by definition of $\mathcal{V}$, we have that $C(z):=\sup_{|\beta|\geq 2}|h^{(\beta)}(z)|<\infty$ and hence
$$\frac{1}{\beta!}|h^{(\beta)}(z) \xi ^\beta| \leq C(z)\frac{1}{\beta!} |\xi|^\beta.
$$
Since  $\sum_{|\beta|\geq 0}\frac{1}{\beta!} |\xi| ^\beta=\exp(|\xi_1|+\ldots+|\xi_d|)$ we get the bound

\begin{align}\label{eqn4}
         &\int_{\xi\in \C^d}|h(z+\xi)-h(z)-\nabla h(z)^\top\xi| K_\vare(z,d\xi) \nonumber \\
         &\qquad
         \leq \int_{\C^d}\sum_{|\beta|\geq 2}C(z)\frac{1}{\beta!}|\xi|^\beta K_\vare(z,d\xi) \nonumber \\
&\qquad\leq 
C(z)\int_{\|\xi\|\leq 1}\|\xi\|^2 K_\vare(z,d\xi)
+
C(z)\int_{\|\xi\|> 1}\exp(|\xi_1|+\ldots+|\xi_d|) K_\vare(z,d\xi),
\end{align}
which is finite  by condition \eqref{eqn1} and  condition~\ref{iitheorem1}. This in particular implies that $h\in\mathcal{D}(\mathcal{A})$.
Next, by \eqref{eqn4} and the dominated convergence theorem we get that
$$
\int_{\C^d}\sum_{|\beta|\geq 2}\frac{1}{\beta!}h^{(\beta)}(z) \xi ^\beta K_\vare(z,d\xi)
=
\sum_{|\beta|\geq 2}\frac{1}{\beta!}h^{(\beta)}(z)h_{ \m^\beta}(z).
$$
The map $\Acal h$ can thus be written as the following pointwise limit of functions in $\Ocal(S)$:
    \begin{align}\label{eq:genps}
        \mathcal{A}h(x)= \lim_{n\to\infty}\bigg(\sum_{|\beta|=1}h^{(\beta)}(x)h_{\underline{\b}^{\beta}}(x)+\sum_{|\beta|=2}\frac{1}{\beta!}h^{(\beta)}(x)h_{\underline{\underline{\a}}^{\beta}}(x) 
        +\sum_{|\beta|=2}^n\frac{1}{\beta!}h^{(\beta)}(x)h_{ \m^\beta}(x)\bigg),
    \end{align}
    for each $x\in S$. Moreover, the limit on the RHS converges also for each $z\in S_\vare$.
   Since the sequence $(h^{(\beta)})_{|\beta|\geq2}$ is locally uniformly bounded on $P_G^d(0)$ by assumption, the sequence in \eqref{eq:genps} is locally uniformly bounded on $P^d_{(R(S)+\vare)\land G}(0)$ by condition~\ref{iiitheorem1}. Since $S_\vare\subseteq P^d_{(R(S)+\vare)\land G}(0)$, the Vitali-Porter theorem (see Proposition~\ref{propclass}\ref{class:it2}) yields then that the limit in \eqref{eq:genps} is well defined for each $z\in P^d_{(R(S)+\vare)\land G}(0)$, belongs to $H(P^d_{(R(S)+\vare)\land G}(0))$, and the convergence holds locally uniformly. This in particular implies that $\Acal h\in \Ocal(S)$ and  for all $z\in P_{R(S)}^d(0)$  by  Proposition~\ref{propclass}\ref{class:it3} it holds
    \begin{align*}
        \mathcal{A}h(z)=\sum_{|\alpha|\geq 0}D^\alpha\left(\mathcal{A}h\right)(0) \frac{z^\alpha}{\alpha!}.
    \end{align*}
  Let $\u\in \Vcal^*$ be such that $h_\u=h|_{P_{R(S)}^d(0)}$. By Weierstrass' theorem (see Proposition~\ref{propclass}\ref{class:it7})  we get 
    \begin{align*}
         D^\alpha (\mathcal{A}h_{\u})
         (0)=&\sum_{|\beta|=1}D^\alpha\big(h_{\u}^{(\beta)}h_{\underline{\b}^{\beta}}\big)(0)
         +\sum_{|\beta|=2}\frac{1}{\beta!}D^\alpha\big(h_{\u}^{(\beta)}h_{\underline{\underline{\a}}^{\beta}}\big)(0)
         +\sum_{|\beta|\geq 2}\frac{1}{\beta!}D^\alpha\big(h_{\u}^{(\beta)}h_{ \m^\beta}\big)(0)
       \\
         =&\sum_{|\beta|=1}(\u^{(\beta)}\ast \underline{\b}^{\beta})_\alpha+\sum_{|\beta|=2}\frac{1}{\beta!}(\u^{(\beta)}\ast (\underline{\underline{\a}}^{\beta}+\m^\beta))_\alpha+\sum_{|\beta|\geq 3}\frac{1}{\beta!}(\u^{(\beta)}\ast \m^\beta)_\alpha,
    \end{align*}
for all $|\alpha|\geq 0$. Since this expression corresponds to $L(\u)_\alpha$ for $L$  as in \eqref{eq:Lgeneralkernel}, this implies that
    $\Acal h_\u=h_{L(\u)}|_S$. 
    Finally, we need to show that the process $N^h$ introduced in Equation \eqref{eq:localmart} is a local martingale. By Theorem II.1.8 in \cite{JS:87} it then suffices to show that for all $h\in\mathcal{V}$ 
\begin{align}\label{localintvar1}
    \int_0^T\int_{\Rset^d} |h(X_{s^-}+\xi)-h(X_{s^-})-\nabla h(X_{s^-})^\top\xi| \ K(X_{s^-},d\xi) \ ds<\infty \ a.s.
\end{align}
Notice first that the process $(X_s)_{s\in[0,T]}$ is a.s.~\cadlag. Thus, it suffices to show that the RHS of \eqref{eqn4} is bounded on compact subsets of $S$. Observe that since the sequence $(h^{(\beta)})_{|\beta|\geq2}$ is locally uniformly bounded on $P_G^d(0)$ we  know that $C(z)$ is bounded on compact subsets of $S$.
By condition \eqref{eqn1} the same holds for the first term of \eqref{eqn4}. For the second term instead, note that  setting  $F(\xi):=\prod_{i=1}^d(\exp(-\xi_i)+\exp(\xi_i))$,  we get
$$
\exp(|\xi_1|+\dots+|\xi_d|)1_{\{\|\xi\|>1\}}\leq  F(\xi)1_{\{\|\xi\|>1\}}
\leq F(\xi)-1-\nabla F(0)^\top \xi+C'\|\xi\|^2,
$$
for some $C'>0$. Since $F\in \Vcal$, applying the Vitali Porter theorem as before, we get that
$$\int_{\R^d} F(\xi)-1-\nabla F(0)^\top\xi \ K(x,d\xi)$$
lies in $\Ocal (S)$. Using again that 
$\int_{ \R^d}\|\xi\|^2K(\cdot,d\xi)\in \Ocal(S)$   the claim follows.

\begin{remark}\phantomsection\label{rem1}
\begin{enumerate}
    \item \label{rem1i}
 In Theorem~\ref{prop: holo generalkernel},   condition
\eqref{eqGj}
 is needed for representing $h$ as a  of power series centered at every $z\in S_\varepsilon $ and  evaluable at $\xi$ for all $\xi\in \text{supp}(K_\varepsilon(z,\cdot))$. Assuming instead $$G_j>\sup_{z\in {S_\vare}}\max\{|z_j|,\sup_{\xi\in \textup{supp}(K_\vare(z,\cdot))}|z_j+\xi_j|\}$$ is not sufficient for  deriving these representations (see Remark \ref{remarkholo}).

This manipulation permits to write $\Acal h$ as a sequence of 
holomorphic functions converging pointwise on $S$. From the Stone-Weierstrass theorem we know that  uniform convergence on $S$ is not sufficient to conclude that $\Acal h$ is holomorphic on $S$. We thus need to resource to the Vitali-Porter theorem (see Proposition~\ref{propclass}\ref{class:it2}). Condition \eqref{eqGj} is applied once again to verify the respective assumptions.

    \item \label{rem1ii}The proof of Corollary~\ref{coro1dimensional1} is analogous to the one of Theorem~\ref{prop: holo generalkernel}. We stress that when $d=1$, the Vitali-Porter theorem guarantees that a locally bounded sequence of holomorphic functions converges uniformly on compact subsets of some domain if pointwise convergence holds on a set containing an accumulation point in the considered domain (see Proposition~\ref{propclass}\ref{class:it2} for more details). For this reason, instead of checking the pointwise convergence of \eqref{eq:genps} for each $z\in S_\vare$ it thus suffices to check it for each $x\in S$.
  
    This explains why in this particular case, instead of assuming $K\in \mathcal{O}(S)$, we simply require that condition \eqref{eq:Vitali_1dim_moments} is satisfied. Similarly, condition~\ref{iitheorem1} in Theorem~\ref{prop: holo generalkernel} is then replaced by the weaker one which concerns only the set $S$ which is by assumption a set of accumulation points of $P^1_{R(S)+\vare}(0)$.
    \end{enumerate}
\end{remark}

\subsection{Proof of Theorem~\ref{ps to ps}}\label{sec312}
First observe that  $\Vcal\subseteq \Dcal(J)$ directly implies that $\Vcal\subseteq \Dcal(\Acal)$. Next, fix $h_{\u}\in \Vcal$. Recall from Section~\ref{sec 214} that the assumptions on $G$ guarantee that $h_{\u}(\cdot+j_\vare(\cdot,y))\in H(P_{R(S)+\vare}^d(0))$ and the corresponding coefficients are given by $\u\circ^s\underline{\j}( y)$.
    By Lemma~\ref{prop:Morerageneral} we thus get that the map $z\mapsto\int_E Jh_\u(z,y)F(dy)$ lies in $H(P_{R(S)+\vare}^d(0))$ and the corresponding coefficients are given by
    $$\la\ast \int_{E}\u\circ^s \underline{\bm{j}} ( y)-\u-\sum_{|\beta|=1}\u^{(\beta)}\ast \underline{\bm{j}}( y)^{\ast\beta} \ F(d y).$$
This implies that $\Acal(\Vcal)\subseteq \Ocal(S)$. As the continuity of the map $z\to Jh_\u(z,\cdot)$ implies the continuity of the map
$$z\mapsto \int_E|Jh_\u(z,y)|F(dy),$$
 the claim follows as in the proof of Theorem~\ref{prop: holo generalkernel}.  
\endproof

\begin{remark}\label{rem: G Morera} 
    Observe that the condition \eqref{eqnG} on the polyradius $G$ is needed to derive the representation \eqref{eq:OpLu} in terms of the operation on the coefficients $\circ^s$ introduced in Section~\ref{sec 214} (see Remark \ref{remarkholo}). 
\end{remark}

\subsection{Proof of Corollary~\ref{prop:unboundedjs}}\label{proofprof:unboundedjs}
We verify the conditions of Theorem~\ref{ps to ps}.
 Fix $h\in  \Vcal$ and note that by definition of $G$ there is an $\eta>0$ such that setting $R_z:=\eta+\sup_{y\in \text{supp}(F)}|j_\vare(z,y)|\land 1$ we get
 $$\overline{P_{R_z}^d(z)}\subseteq P_G^d(0)$$
 for each $z\in P^d_{R(S)+\vare}(0)$. Since $|j_\vare(z,y)|1_{\{\|j_\vare(z,y)\|\leq 1\}}$ is bounded away from $R_z$, we get by Proposition~\ref{propclass}\ref{class:it3} 
 $$|h(z+j_\vare(z,y))-h(z)-\nabla h(z)^\top j_\vare(z,y)|1_{\{\|j_\vare(z,y)\|\leq 1\}}\leq C \|f|_{T_{R_z}^d(z)}\|_\infty\|j_\vare(z,y)\|^21_{\{\|j_\vare(z,y)\|\leq 1\}},$$
 for some $C$ not depending on $z$ and $y$. Observe also that by
 definition of $\Vcal$ it holds
 $|h(w+\xi)|\leq C' v(\|w+\xi\|)$ and 
 for $\|\xi\|>1$ we get
\begin{align*}
    |h(z+\xi)-h(z)-\nabla h(z)^\top \xi|
    &\leq C'|v(\|z+\xi\|)|
 +(|h(z)|+\|\nabla h(z)\|) \|\xi\|,
\end{align*}
 for some $C'>0$. By continuity of $h$ and its derivatives we thus obtain
 $$\sup_{w\in P_{\delta_z}^d(z)}|Jh(w,y)|
 \leq C''\sup_{w\in P_{\delta_z}^d(z)}  \Big(v(\|w+j_\vare(w,y)\|)1_{\{\|j_\vare(w,y)\|>1\}}
 +\|j_\vare(w,y)\|\land \|j_\vare(w,y)\|^2
 \Big),$$
 for some $C''>0$.
The claim now follows by the dominated convergence theorem.\qed

\begin{remark}\phantomsection\label{rem: G Morera corollary}
\begin{enumerate}
    \item \label{rem: G Morera corollary i} Notice that here the condition on the polyradius $G$ is also necessary to apply Proposition~\ref{propclass}, which provides an estimate of the Taylor approximation of a complex-valued holomorphic function. 
    \item An inspection of the proof of Corollary~\ref{prop:unboundedjs} shows that the claim follows also by considering the slightly larger set of holomorphic functions given by 
\begin{align*}
    \{h\in H(P^d_G(0)) \colon
     \sup_{z\in P^d_{R(S)+\vare}(0)}|h(z)|v(\|z\|)^{-1}<m \}.
\end{align*}
\end{enumerate}
    
\end{remark}

The proof of Theorem~\ref{proof: levy processes 1} and Corollary~\ref{coro: levy1 bounded support} strongly relate to the Lévy-Kinchine formula for the moment generating function (Theorem~25.17 in \cite{Sato:13}), which states that, under some integrability conditions, for each $|\tau|\leq \alpha$ it holds
\begin{equation}\label{expmom}
    P_t \exp(\tau(\cdot))(x)=\exp(\tau x+t \psi(\tau)),
\end{equation}
for $\psi(\tau)=b\tau+\frac 1 2 a\tau^2+\int_E \exp(\tau y)-1-\tau y F(dy)$,
and $P_t$ denoting the semigroup $(P_t)_{t\in[0,T]}$ given by $P_th(x)=\E[h(X_t+x)]$,  for each $t\in [0,T]$, $x\in\Rset$ and measurable map $h$ for which $\E[|h(X_t+x)|]<\infty$.

\subsection{Proof of Theorem~\ref{prop: levy processes 1}}\label{proof: levy processes 1}
Note that 
\begin{equation}\label{firsteq}
    \Vcal = \{ h \in H(\C)\colon |h(z)|\leq C \exp(\tau |z|) \text{ for }\tau<\alpha, \ C>0\}.
\end{equation}
 ~\ref{item: levy1 i}: Since the jump size $j_\vare$ is constant in its first argument, condition \eqref{eq:theorem_unboundejs} for $v(t):=\exp(\alpha t)$ is verified. The statement then follows by Corollary~\ref{prop:unboundedjs}. 

\ref{item: levy1 ii}:  Observe that $\mathcal A$ is $g$-cyclical for $g(x):=\exp(\alpha  x)+\exp(-\alpha  x)$ in the sense of Definition~\ref{def:g-cyclical}. Moreover, by definition of $\Vcal$ for each $h\in \Vcal$ there is a $p>1$ such that  $\|h^p\|_g<\infty$. Next, by Proposition~\ref{propE3} we know that $h',h''\in \Vcal$. Noting that
$$|h(x+y)-h(x)-h'(x)y|
\leq \sup_{|y|\leq 1}|h''(x+y)||y|^2+
(|h(x+y)|+|h(x)|+|h'(x)||y|)1_{|y|>1},$$
we also get that $\Acal h\in \Vcal$ and hence $\|(\Acal h)^p\|_g<\infty$ for some (possibly different) $p>1$. Proceeding as in the proof of Lemma~\ref{lemma: lemma moment formula weights}\ref{it ii weight} the claim follows.

\ref{item: levy1 iii}:  
Consider the semigroup $(P_t)_{t\in[0,T]}$ given by $P_th(x)=\E[h(X_t+x)]$, for $h\in \mathcal{V}$, $t\in [0,T]$, $x\in\Rset$. Notice that it is well-defined since
$P_t|h|(x)=\E[|h(X_t+x)|]<\infty,$ by Theorem 25.17 in \cite{Sato:13} and definition of $\Vcal$. Fix $h_\u\in \mathcal{V}$ and recall that we already showed that $\mathcal{A}h_\u\in \mathcal{V}$ and thus $|\mathcal{A}h_\u|\leq C\exp(\alpha |\cdot|)$. To show that $\int_0^tP_s|\Acal h_\u|(x)ds<\infty$ it suffices to note that 
\begin{align}\label{eq: PsAh bounded levy}
\exp(\alpha|x|)\leq \exp(\alpha x)+\exp(-\alpha x)
\end{align}
and \eqref{expmom} can be applied.
Next, we prove that for all $t\in [0,T]$, $P_th_\u\in\Vcal$. Fix $t\in [0,T]$ and for each $z\in \C$ set
$$P_th_\u(z)=\E[h_\u(X_t+z)].$$ 
Observe that by \eqref{firsteq} there is a $\tau<\alpha$ such that 
\begin{equation}\label{finitemom}
  \E[|h_\u(X_t+z)|]  \leq  C\E[\exp(\tau |X_t+z|)]
  \leq C\exp(\tau |z|) \E[\exp(\tau |X_t|)],
\end{equation}
for each $z\in \C$. Therefore by the dominated convergence theorem the map $z\mapsto \E[h_\u(X_t+z)]  $ is continuous and
$\int_\triangle \E[|h_\u(X_t+z)|] dz $
is finite for every triangle $\triangle\subset \C$. By Fubini's and Goursat's theorem (see Proposition~\ref{propclass}\ref{class:it4}), we then get $\int_\triangle \E[h_\u(X_t+z)] dz = \E[\int_\triangle h_\u(X_t+z)dz]=0.$ Finally, by Morera's theorem (see Proposition~\ref{propclass}\ref{class:it5}) $P_th_\u\in H(\C)$ and by \eqref{finitemom} and \eqref{firsteq} $P_th_\u\in \Vcal$. Finally, notice that since $\mathcal{A}h_\u\in \mathcal{V}$, by the previous reasoning $P_t\mathcal{A}h_\u\in \mathcal{V}$, implying in particular that $S\ni x\mapsto P_t\mathcal{A}h_\u(x)$ is continuous, concluding the proof.

Finally, let us denote by $\c(t)_{t\in [0,T]}$ such solution and recall from Lemma~\ref{lemma:AP=PA} that for all $x\in S$, $h_{\c(t)}(x)=P_th_\u(x)$ and $\mathcal{A}P_th_\u(x)=P_t\mathcal{A}h_\u(x)$. Since $\c(t)\in \Vcal^*$, by~\ref{item: levy1 ii} condition~\ref{moment formula ii} of Theorem~\ref{th: moment formula holomorphic} is verified. Next, notice that again by Lemma~\ref{lemma:AP=PA} and \eqref{expmom}
\begin{align*}
    \int_0^T\int_0^T\E[|\mathcal{A}h_{\c(s)}(X_t)|]dsdt=\int_0^T\int_0^T\E[|P_{s}\mathcal{A}h_\u(X_t)|]dsdt<\infty.
\end{align*}
Since all the conditions of Theorem~\ref{th: moment formula holomorphic} are verified, the claim follows.

\subsection{Proof of Corollary~\ref{coro: levy1 bounded support}}\label{proofcor}
Set  $$\mathcal{B}=\{h\in H(\C)\colon |h|_\R|\leq C\exp(a|\cdot|)\text{ for some }a\in \Rset, C>0\}.$$
and notice that $\mathcal{B}\subseteq\mathcal{V}$.

Condition~\ref{item: levy2 i} follows by Remark~\ref{remark:nofinitemoments}\ref{bounded jump size} and~\ref{item: levy2 ii} follows as in Proposition~\ref{prop: levy processes 1}.   We highlight in particular that $\mathcal{A}(\mathcal{V})\subseteq \mathcal{B}$.

       ~\ref{item: levy2 iii}:  We proceed as in the proof of Theorem~\ref{prop: levy processes 1} showing that the conditions of Lemma~\ref{lemma:AP=PA} are satisfied. Consider the semigroup $(P_t)_{t\in[0,T]}$ given by $P_th(x)=\E[h(X_t+x)]$, for $h\in \mathcal B$, $t\in [0,T]$, $x\in\Rset$.
Since for each $h\in \Vcal$ it holds $\mathcal{A}h\in \mathcal{B}$, following the proof of Theorem~\ref{prop: levy processes 1}\ref{item: levy1 iii} we get that $\int_0^TP_s|\mathcal{A}h|(x)ds<\infty$ for all $x\in S$ and $N^h$ is a true martingale.

Fix $h_\u\in \mathcal{V}$, for $\u$ as in~\ref{item: levy2 iii}. 
Observe that by Proposition~\ref{propfur} we get 
        \begin{align}\label{eq: inversion Fourier}
            h_\u(x)=\int_\Rset \hat h_\u(u)\exp(iux)du,
        \end{align}
        for a.e.~$x\in \Rset$. Since  $h_\u$ is continuous on $\R$ by assumption, and the right-hand side of \eqref{eq: inversion Fourier} is continuous by \eqref{eq:condition Fourier} and dominated converge theorem, the equality in \eqref{eq: inversion Fourier} holds for every $x\in \R$. 
        Finally, for $t\in [0,T]$ and $x\in \Rset$, 
        by Fubini's theorem we have that
           \begin{align*}
    P_t h_\u(x)=\int_{\Rset}\E[\exp(iuX_t)]  \exp(iux)  \hat h_\u(u) du.
    \end{align*}
    Since the expectation on the right hand side is bounded by 1, by \eqref{eq:condition Fourier} we can extend this map to $\C$ 
      and use Morera's theorem to deduce that $P_th_\u$ is an entire map bounded on $\Rset$. Moreover, by \eqref{expmom} and Leibniz integral rule we can conclude that $P_th_\u\in \mathcal{V}$. Since $\Acal h_\u\in \Bcal$ by \eqref{expmom}  $P_t\Acal h_\u$ is continuous on $S$. The last part of the proof follows as in Theorem~\ref{prop: levy processes 1}.
    \qed

\subsection{Proof of Proposition~\ref{prop: affine}}\label{proof: prop affine}
\ref{itemaffini i}: This follows by Remark~\ref{remark:nofinitemoments}\ref{bounded jump size}. 

\ref{itemaffini ii}:  Observe that for $h\in \Vcal$ by the characterization of the characteristic of an affine process (see Lemma 2 in \cite{FL:20})
we have that
$|\Acal h(x)|\leq C(1+|x|).$
Since $X$ is also a polynomial process we know that
$$\E[\sup_{t\in [0,T]} (1+|X_t|)]\leq C\E[(1+X_T^2)]<\infty,$$
showing that $N^h$ is a uniformly integrable local martingale and thus a true martingale.

Next fix $\u$ as in~\ref{itemaffini iii} and consider the semigroup given by $P_th(x):=\E[h(X_t)|X_0=x]$ for each measurable $h$ such that $\E[|h(X_t)||X_0=x]<\infty$. We verify that the remaining conditions of Lemma~\ref{lemma:AP=PA} are satisfied. Observe that by Fubini we have that
$$P_th_\u(x)=\E\Big[\int_{-\tau}^\tau \exp((\varepsilon+iu)X_t)g(u) du\Big|X_0=x\Big]
=\int_{-\tau}^\tau \exp(\phi(t,\varepsilon+iu)+\psi(t,\varepsilon+iu)x)g(u) du.$$
Since by the dominated convergence theorem we know that $u\mapsto \phi(t,\varepsilon+iu)$ and $u\mapsto \psi(t,\varepsilon+iu)$ are continuous for each $t$ and $\e$, the map 
$$x\mapsto \exp(\phi(t,\varepsilon+iu)+\psi(t,\varepsilon+iu)x)g(u)$$
can be extended to $\C$ and since its module is bounded on compacts an application of Fubini, Goursat and Morera (see Proposition~\ref{propclass}\ref{class:it4} and~\ref{class:it5}) yield that $P_th_\u\in H(\C)$.

Next, note that since affine processes are Feller (see Theorem 2.7 in \cite{DFS:03}) we know that $P_t$ is mapping the set of  bounded continuous functions into itself. This in particular implies that 
$$P_t\Acal h_\u(x)=\lim_{N\to \infty}\E[\Acal h_\u(X_t)\land N|X_0=x],$$
which is a sequence of continuous functions in $x$. Observe that
$$|P_t\Acal h_\u(x)-\E[\Acal h_\u(X_t)\land N|X_0=x]|
\leq 
\E[|\Acal h_\u(X_t)|1_{\{\Acal h_\u(X_t)>N\}} |X_0=x],$$
which by Cauchy-Schwartz and Markov's inequality can be bounded by
\begin{align*}
    &\E[|\Acal h_\u(X_t)|^2 |X_0=x]^{1/2}\mathbb P(\{\Acal h_\u(X_t)>N\} |X_0=x)^{1/2}\\
&\qquad\leq 
\E[|\Acal h_\u(X_t)|^2 |X_0=x]^{1/2}
\frac{\E[|\Acal h_\u(X_t)| |X_0=x]^{1/2}}{N^{1/2}}.
\end{align*}
By the polynomial property of $X$, proceeding as in~\ref{itemaffini ii} we get that the convergence is uniform on compacts and thus $P_t\Acal h_\u$ is continuous on $S$. Similarly, again by the polynomial property of $X$ we get that for each $x\in S$, $P_t|\Acal h_\u|(x)$ is bounded by a continuous function in $t$ and thus $\int_0^TP_t|\Acal h_\u|(x) dt<\infty$ for each $x\in S$.  Observe furthermore that same argument implies also that $\int_0^T\int_0^TP_{t+s}|\Acal h_\u|(x)dsdt<\infty$. The claim follows by Remark~\ref{rem: lemma AP=PA}.
\qed

\section{Proofs of Section~\ref{sec: affine holomorphic}}
Before presenting the proofs of Section~\ref{sec: affine charact}, recall that by Definition~\ref{def:affine-holomorphic process}, in order to prove that $X$ is an $S$-valued $\Vcal$-affine-holomorphic process we need to show that for each  $f\in\Vcal$ it holds $\exp(f)\in\Dcal(\Acal)$, there exists a map $\mathcal{R}:\Vcal\rightarrow\mathcal{O}(S)$ such that for all $f\in\Vcal$,
$\mathcal{A}\exp(f)=\exp(f)\mathcal{R}(f)$ and the process $N^{\exp(f)}$ introduced in equation \eqref{eq:localmart} defines a local martingale. 
\subsection{Proof of Lemma~\ref{lem:niceR}}\label{proof: theorem new}
First observe that  from $\{e^h\colon h\in \Vcal\}\cup\Vcal\subseteq \Dcal(J)$ it follows  $\{e^h\colon h\in \Vcal\}\cup\Vcal\in \Dcal(\Acal)$. This in particular implies that
 the map $P^d_{R(S)+\vare}(0)\ni z\mapsto Je^h(z,\cdot)-e^{h(z)}Jh(z,\cdot)\in L^1(E,F)$ is well defined and continuous for each $h\in \Vcal$. 
 This integrability then yields that
 \begin{align*}
\mathcal{A}\exp(h)(z)&=\exp(h(z))\left(\nabla h(z)^\top b(z)+\frac{1}{2}\text{Tr}\big(a(z)\left(\nabla ^2h(z)+\nabla h(z)^\top\nabla h(z) \right)\big)\right)\\
    &\qquad +\int_EJe^h(z,y) F(dy)\nonumber\\
&=\exp(h(z))\left(\mathcal{A}h(z)+\frac{1}{2}\text{Tr}\big(a(z)\left(\nabla ^2h(z)+\nabla h(z)^\top\nabla h(z) \right)\big)\right.\\
       & \qquad \left.+\lambda(z)\int_{E}\exp\Big(h(z+j_\vare(z,\xi))-h(z)\Big)-1-\big(h(z+j(z,\xi))-h(z)\big) \ F(d\xi)\right),
\end{align*}
for each $h\in \Vcal$, $z\in P^d_{R(S)+\vare}(0)$. The claim now follows as in the proof of Theorem~\ref{ps to ps}.
 \qed

\subsection{Proof of Corollary~\ref{theorem: affine_weigh}}\label{proof: theorem affine weight}
We prove that the conditions of Theorem~\ref{theorem: new} and Lemma~\ref{lem:niceR} are satisfied.
Proceeding as in the proof of Corollary~\ref{prop:unboundedjs}, for each $z\in P_{R(S)+\vare}^d(0)$ and $y\in E$ both quantities $\sup_{w\in P_{\delta_z}^d(z)}|Jh(w,y)|$ and $\sup_{w\in P_{\delta_z}^d(z)}|Je^h(w,y)|$ can be bounded by
$$ C\sup_{w\in P_{\delta_z}^d(z)}  \Big(\exp(m \ v(\|w+j_\vare(w,y)\|))1_{\{\|j_\vare(w,y)\|>1\}}
 +\|j_\vare(w,y)\|\land \|j_\vare(w,y)\|^2
 \Big).$$
 Observe that the constant $m$ enters in the bound due to the second quantity, of which  we know how to bound the exponent and not the whole function.

By conditions  \eqref{eq: affine weight}, \eqref{eq:M affine weight} this implies that $h, e^h\in \Dcal(J)$. 
Finally, observe that the maps
\begin{align*}
  z\longmapsto F^1_z(\cdot):=Je^h(z,\cdot)\nonumber\qquad\text{and}\qquad z\longmapsto F^2_z(\cdot):=Jh(z,\cdot)
\nonumber,
\end{align*} 
are both continuous by  the dominated convergence theorem (see the proof of Corollary~\ref{prop:unboundedjs} for the detailed argument). \qed

\subsection{Proof of Corollary~\ref{prop:affine_Levy}}\label{proof: prop_affine_Levy}
First observe that since $j(\cdot,\xi)=\xi$ is a real valued constant on $S$, the same holds for its extension $j_\vare(\cdot,\xi)=\xi$ on $P_{R(S)+\vare}^d(0)$. This in particular implies that
$$\int_{\R^d}\sup_{w\in P_{R(S)+\vare}^1(0)}\|j_\vare(w,\xi)\|\land\|j_\vare(w,\xi)\|^2 F(d\xi)=\int_{\R^d}\|\xi\|\land\|\xi\|^2 F(d\xi)<\infty.$$
Moreover, using that $\xi\in \R^d$, by definition of $\Vcal$ we get that $|h(z+\xi)|1_{\{\|\xi\|>1\}}\leq g_h(\Im(z))1_{\{\|\xi\|>1\}}$.
Since for each $z\in P_{R(S)+\vare}^d(0)$ and $\delta_z\in (0,\infty)$ such that $P_{\delta_z}^d(z)\subseteq P_{R(S)+\vare}^d(0)$ by continuity of $g_h$ it holds
$$\int_{\R^d}\sup_{z\in P_{\delta_z}^1(z)}\exp(g_h(\Im(z)))1_{\{\|\xi\|>1\}} F(d\xi)<\infty$$
the claim follows as for Corollary~\ref{theorem: affine_weigh}.\qed

\section{A primer on holomorphic functions}\label{appendix:holofunctions}

The goal of this section is to provide accurate statements and precise references for the needed results from complex analysis.
We will use the notation relative to polydiscs and polytorus introduced in Section~\ref{sec213}, as well as the multi-index notation introduced in Section~\ref{sec: multi index notation}.

\begin{definition}\label{def: holo functions}(\textit{Definition 1.2.1 in} \cite{SC:05})
    Let $U\subseteq\mathbb C^d$ be an open set and $m\in \N$.
    \begin{enumerate}
        \item {\textbf{Complex differentiable functions}} 
        
        A function $f:U\rightarrow\C^m$ is called \textit{complex differentiable} at $z_0\in U$ if for every $\varepsilon>0$ there exists a $\delta>0$ and a $\C$-linear map
    \begin{align*}
        Lf(z_0): \C^d\rightarrow\C^m
    \end{align*}
    such that  for all $z\in U$ with $\|z-z_0\|\leq \delta$, the inequality 
    \begin{align*}
        \|f(z)-f(z_0)-Lf(z_0)(z-z_0)\|\leq \vare\|z-z_0\|
    \end{align*}
    holds.
     \item {\textbf{Holomorphic functions}}
     
     A function $f:U\rightarrow\C^m$ is called \textit{holomorphic} on $U$ if it is complex differentiable at all $z_0\in U$. A holomorphic function on the whole $\C^d$ is called \textit{entire}.
    \end{enumerate}
   
\end{definition}

\begin{proposition}\label{propclass}
Let $U\subseteq\mathbb C^d$ be an open set.
\begin{enumerate}
    \item\label{class:it1}{\textbf{Cauchy integral formula}} 
 (Theorem 1.3.3 in \cite{SC:05})

     Let $f:U\to \C$ be a holomorphic function and fix $z_0\in U$ and $R\in (0,\infty)^d$ such that   $\overline{P_R^d(z_0)}\subseteq U$. Then 
    \begin{align*}
     D^\alpha f(z_0)=\frac{\alpha!}{(2\pi i)^d}\int_{T^d_{R}(z_0)} \frac{f(w)} {(z_0-w)^{\alpha +1}}   dw,
    \end{align*}
    where $\alpha+1:=(\alpha_1+1,\ldots,\alpha_d+1)$, and 
    $$|D^\alpha f(z_0)|\leq \frac {\alpha!}{R^\alpha}\|f|_{T_R^d(z_0)}\|_\infty.$$

    \item\label{class:it3}{\textbf{Taylor expansion}} (Corollary 1.5.9 in \cite{SC:05})
    
    Let $f:U\to \R$ be a holomorphic functions and fix $z_0\in U$ and $R\in (0,\infty]^d$ such that   $\overline{P_R^d(z_0)}\subseteq U$. Then 
\begin{equation}\label{eqPSR}
        f(z)=\sum_{\alpha\in \N^d_0}\frac 1 {\alpha!}D^\alpha f(z_0)(z-z_0)^\alpha,
    \qquad \text{for all }z\in P_R^d(z_0).
\end{equation}
By~\ref{class:it1} and Example~1.5.7 in \cite{SC:05}, for $R\in (0,\infty)^d$  the reminder's term can be bounded as follows
\begin{align*}
\Big|\sum_{|\alpha|\geq k+1}\frac 1 {\alpha!}D^\alpha f(z_0)(z-z_0)^\alpha\Big|
\leq\|f|_{T_R^d(z_0)}\|_\infty\prod_{i=1}^d\frac 1 {1-\frac{|z_i-z_{0i}|}{R_i}}\sum_{|\alpha|= k+1}\frac{|z-z_0|^\alpha}{R^\alpha}.
        \end{align*}
 \item\label{class:it identity}{\textbf{Identity theorem}} (Conclusion 1.2.12.2 in \cite{SC:05}) Let $f:U\rightarrow\C$ be a holomorphic function. If $U$ is connected and $f=0$ on some open set $E\subseteq U$, then $f(z)=0$ for all $z\in U$. For $d=1$ the set $E$ is just required to have an accumulation point in $U$.
    \item \label{class:it2}{\textbf{Vitali-Porter theorem}} (Exercise 1.4.37 in \cite{SC:05}) 
    
    Fix an open subset $E\subseteq U$ and consider a sequence $\{f_n\}_n$ of holomorphic functions $f_n:U\to \C$.
    Suppose that $\{f_n\}_n$ is a locally bounded sequence (uniformly bounded on every compact set of the domain of definition) and  
    \begin{equation}\label{eqn7}
    f(z):=\lim_{n\to\infty}f_n(z)
    \end{equation} exists for all $z\in E$. Then the limit in \eqref{eqn7} is well defined for each $z\in U$, the resulting map $f:U\to\C$ is holomorphic, and the convergence holds locally uniformly. For $d=1$ the set $E$ is just required to have an accumulation point in $U$ (see p.44 \cite{S:93}).

    \item\label{class:it4}{\textbf{Goursat's theorem}} (Theorem 1.1 in \cite{SS:10})
    
    Fix $d=1$ and let $\triangle\subseteq U$ be a triangle whose interior is contained in $U$. If $f:U\to \C$ is holomorphic then
    $$\int_\triangle f(z) dz=0.$$
    \item\label{class:it5}{\textbf{Morera's theorem}} (Theorem 5.1 in \cite{SS:10})
    
    Fix $d=1,R\in (0,\infty)$,   $a\in \C$, and let $f:P^1_R(a)\to \C$ be a continuous map. If for any triangle $\triangle\subset P^1_R(a)$ it holds
    $$\int_\triangle f(z) dz=0,$$
    then $f$ is holomorphic.
    \item\label{class:it6}{\textbf{Hartogs' theorem}} (see p.28 in \cite{S:92})
    
    Let $f:U\to \C$ be a partially holomorphic function, meaning that it is holomorphic in each variable  while the other variables are held constant. Then $f$ is holomorphic.
    \item \label{class:it7}{\textbf{Weierstrass' theorem}} (Theorem 1.4.20 in \cite{SC:05}).

     $H(U,\C)$ is a closed subspace of $C(U,\C)$ with respect to the locally uniform convergence. Moreover, for every $\alpha\in \N_0^d$ the linear operator
$D^\alpha:H(U,\C)\to H(U,\C)$ 
is continuous.
\end{enumerate}
\end{proposition}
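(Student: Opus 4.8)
The plan is to establish each item by reducing it to the classical source already cited in its statement, after which the only genuine work lies in (a) extracting the explicit Cauchy-type estimate in \ref{class:it1} and the Taylor remainder bound in \ref{class:it3} in our polydisc notation, and (b) recording the one-dimensional refinements of the identity theorem \ref{class:it identity} and of the Vitali--Porter theorem \ref{class:it2}. Items \ref{class:it1} (the formula itself), \ref{class:it3} (the expansion \eqref{eqPSR}), \ref{class:it4}, \ref{class:it5}, \ref{class:it6} and \ref{class:it7} are, modulo notation, verbatim restatements of results from \cite{SC:05}, \cite{SS:10} and \cite{S:92}; for these I would simply match the hypotheses (in particular the requirement $\overline{P_R^d(z_0)}\subseteq U$) and the conclusions to the cited statements and invoke them directly, noting that the $\C^m$-valued case follows by applying the scalar results componentwise.

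The two genuine estimates I would derive as follows. For the bound in \ref{class:it1}, I start from the Cauchy integral formula and estimate the integral over the polytorus $T_R^d(z_0)$: on $T_R^d(z_0)$ one has $|z_{0,i}-w_i|=R_i$ for every $i$, hence $|(z_0-w)^{\alpha+1}|=R^{\alpha+1}$; parametrizing each of the $d$ circles gives $|dw_i|=R_i\,d\theta_i$, so that $\int_{T_R^d(z_0)}|dw|=(2\pi)^d\prod_i R_i$. Combining these with the factor $\alpha!/(2\pi)^d$ and using $\prod_i R_i/R^{\alpha+1}=R^{-\alpha}$ yields exactly $|D^\alpha f(z_0)|\leq \tfrac{\alpha!}{R^\alpha}\|f|_{T_R^d(z_0)}\|_\infty$. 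For the remainder bound in \ref{class:it3}, I insert this Cauchy estimate of each coefficient $D^\alpha f(z_0)$ into the tail $\sum_{|\alpha|\geq k+1}\tfrac{1}{\alpha!}D^\alpha f(z_0)(z-z_0)^\alpha$, obtaining the majorant $\|f|_{T_R^d(z_0)}\|_\infty\sum_{|\alpha|\geq k+1}\tfrac{|z-z_0|^\alpha}{R^\alpha}$. Writing $\rho_i:=|z_i-z_{0,i}|/R_i<1$, I would factor out the block of lowest degree $k+1$ and bound the remaining factor by the product of geometric series $\prod_{i=1}^d\sum_{m\geq 0}\rho_i^m=\prod_{i=1}^d(1-\rho_i)^{-1}$, which is precisely the claimed bound. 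This multi-index bookkeeping is the step requiring the most care.

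For \ref{class:it identity} and \ref{class:it2} I would cite the multivariable statements (Conclusion~1.2.12.2 and Exercise~1.4.37 in \cite{SC:05}) and then separately record the $d=1$ sharpenings, in which the vanishing set, respectively the set of pointwise convergence, need only possess an accumulation point in $U$; these are the classical one-variable identity and Vitali--Porter theorems (see p.~44 in \cite{S:93}) and follow because the zero set of a nonzero holomorphic function of one variable is discrete. The main obstacle here is not any deep argument but the faithful transcription of the explicit constants, above all ensuring that the normalization of the Cauchy formula in \ref{class:it1} is consistent with the remainder estimate in \ref{class:it3}, and checking that the scalar results transfer to the $\C^m$-valued and real-restriction settings in which they are applied in the body of the paper.
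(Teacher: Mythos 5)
Your proposal is correct and takes essentially the same route as the paper, which states Proposition~\ref{propclass} without proof as a compendium of cited classical results (Theorem~1.3.3, Corollary~1.5.9, Example~1.5.7, Conclusion~1.2.12.2, Exercise~1.4.37 and Theorem~1.4.20 in \cite{SC:05}, Theorems~1.1 and~5.1 in \cite{SS:10}, p.~28 in \cite{S:92}, p.~44 in \cite{S:93}). Your two supplementary computations — the polytorus estimate giving $|D^\alpha f(z_0)|\leq \alpha!\,R^{-\alpha}\|f|_{T_R^d(z_0)}\|_\infty$, and the tail bound obtained by decomposing each $\alpha$ with $|\alpha|\geq k+1$ as $\beta+\gamma$ with $|\beta|=k+1$ and summing the geometric series $\prod_{i=1}^d(1-\rho_i)^{-1}$ — are exactly the standard arguments the paper implicitly invokes via~\ref{class:it1} and Example~1.5.7 in \cite{SC:05}, and they are carried out correctly.
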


\begin{remark}\label{remarkholo}
 If $U=P^d_\infty(0)$, and  $f$ thus entire, the representation in \eqref{eqPSR} holds for all $z,z_0\in \mathbb{C}$. 
 If this is not the case, then requiring that $z,z_0\in U$ is not sufficient to guarantee that $\overline{P_{|z-z_0|}^d(z_0)}\subseteq U$ and thus to derive a power series representation of $f(z)$ centered in $z_0$ and evaluated in $z-z_0$, for each $z\in U$ as in  \eqref{eqPSR}.  This is possible if $f\in H(D)$, for an open set $D$ such that $U\subseteq D\subseteq \C^d$ and   for all $z_0,z
 \in U$, $\overline{P_{|z-z_0|}^d(z_0)}\subseteq D$.
\end{remark}

We also consider a few results concerning holomorphic functions of finite order and type (see \eqref{eqn8} and \eqref{eqn10}). 
\begin{proposition}\label{propE3} 
(Theorem 2.4.1, Theorem 6.2.14, Theorem 6.2.4 and Theorem 11.1.2 in \cite{B:11} and  Theorem 19.3 in \cite{Rudin:86})
\begin{enumerate}
    \item Order and type of an entire function do not change under differentiation. 
    \item  Non-constant entire functions of order strictly smaller than 1 are unbounded on lines.
    \item\label{item entire functions bounded R iii} Let $\tau\in (0,\infty)$ and let
$B_\tau$ be the set of  
 all entire functions of exponential type  $\tau$, which are bounded on the real axis.  Then  $h\in B_\tau$ implies 
 $$|h(z)|\leq \sup_{x\in \R}|h(x)| \exp(\tau |\Im(z)|).$$
 Moreover, in this case we also have that $h'\in B_\tau$, 
$\sup_{x\in \R}|h'(x)|\leq \sup_{x\in \R}|h(x)|\tau$
for all $z\in \C$, and thus in particular 
 $$|h'(z)|\leq \sup_{x\in \R}|h(x)| \tau \exp(\tau |\Im(z)|).$$
 \item\label{exp4} {\textbf{Paley--Wiener theorem}} Let $h\in H(\C)$ be an entire function of exponential type $\tau$ such that 
 $$\int_{\R}|h(x)|^2dx<\infty.$$
 Then there exists $g\in L^2(-\tau,\tau)$ such that
 $h(z)=\int_{-\tau}^\tau g(t)e^{itz}dt$ for each $z\in \C$.
\end{enumerate}
\end{proposition}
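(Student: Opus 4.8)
The plan is to establish each of the four classical assertions about entire functions of finite order and type separately. For part (i), I would work with the coefficient description of order and type: writing $h(z)=\sum_{n\ge0}a_nz^n$, the order is $\rho=\limsup_{n\to\infty}\frac{n\log n}{\log(1/|a_n|)}$ and, when $0<\rho<\infty$, the type satisfies $e\rho\tau=\limsup_{n\to\infty}n|a_n|^{\rho/n}$. The derivative $h'$ has coefficients $b_n=(n+1)a_{n+1}$, and since the extra factor $(n+1)$ is only polynomial in $n$, it changes neither $\limsup_n \frac{n\log n}{\log(1/|b_n|)}$ nor $\limsup_n n|b_n|^{\rho/n}$; hence order and type are preserved. (Alternatively, Cauchy's estimate gives $M_{h'}(r)\le M_h(r+1)$ while $M_h(r)\le|h(0)|+rM_{h'}(r)$, and feeding these two inequalities into the definitions yields the same conclusion.) For part (ii), suppose $h$ is non-constant of order $\rho<1$ and bounded on some line; after an affine change of variable I may assume the line is $\R$ and $|h|\le M$ there. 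Applying the Phragmén--Lindelöf principle separately in the closed upper and lower half-planes — legitimate because $\rho<1$ lies below the critical exponent for a half-plane — forces $|h|\le M$ throughout each half-plane, hence on all of $\C$, so $h$ is constant by Liouville's theorem, a contradiction.

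Part (iii) is the heart of the matter. I would set $A:=\sup_{x\in\R}|h(x)|$ and consider $F(z):=h(z)e^{i\tau z}$ in the closed upper half-plane. On $\R$ one has $|F|=|h|\le A$, while along the positive imaginary axis the exponential type gives $\limsup_{y\to\infty}\frac1y\log|F(iy)|=\limsup_{y\to\infty}\frac1y\big(\log|h(iy)|-\tau y\big)\le0$. The half-plane Phragmén--Lindelöf theorem, in the form valid for functions of exponential type whose growth along the imaginary axis is controlled, then yields $|F(z)|\le A$ for $\Im z\ge0$, i.e. $|h(z)|\le A e^{\tau\Im z}$; the symmetric argument with $e^{-i\tau z}$ in the lower half-plane produces $|h(z)|\le Ae^{\tau|\Im z|}$ everywhere. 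For the derivative, part (i) shows $h'$ is again of exponential type $\tau$, and a Cauchy estimate over the unit circle centred at a real point, combined with the bound just obtained, shows $h'$ is bounded on $\R$, so $h'\in B_\tau$; the sharp constant $\sup_\R|h'|\le\tau\sup_\R|h|$ is Bernstein's inequality, which I would quote or derive from the interpolation formula for functions in $B_\tau$.

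Finally, for the Paley--Wiener theorem (iv), since $h|_\R\in L^2(\R)$ I would let $g$ be its Fourier transform, so $g\in L^2(\R)$. The key point is $\operatorname{supp} g\subseteq[-\tau,\tau]$: for $|t|>\tau$ one shifts the contour in $g(t)=\frac1{2\pi}\int_\R h(x)e^{-itx}\,dx$ into the half-plane where $|h(z)e^{-itz}|$ decays — permissible thanks to the exponential-type bound from part (iii) — and concludes $g(t)=0$. Fourier inversion then gives $h(x)=\int_{-\tau}^\tau g(t)e^{itx}\,dt$ for a.e. $x\in\R$; both sides are entire (the right-hand side visibly of exponential type $\tau$) and agree on $\R$, hence coincide on all of $\C$ by the identity theorem (Proposition~\ref{propclass}\ref{class:it identity}). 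The main obstacle throughout is precisely the Phragmén--Lindelöf and contour-shifting machinery in parts (iii) and (iv): one must verify the exact growth hypotheses along the rays bounding the relevant half-planes before invoking the maximum-principle conclusions, and it is the combination ``exponential type $\tau$ plus boundedness on $\R$'' that makes these estimates close with the sharp constant $\tau$.
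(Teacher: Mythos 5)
The paper does not actually prove Proposition~\ref{propE3}: Appendix~\ref{appendix:holofunctions} is explicitly a collection of statements with precise references, and this proposition is justified solely by citing Theorems 2.4.1, 6.2.14, 6.2.4 and 11.1.2 in \cite{B:11} and Theorem 19.3 in \cite{Rudin:86}. Your proposal is therefore by construction a different route: a from-scratch reproof of classical facts. For items (i)--(iii) your sketches follow the standard textbook arguments and are essentially sound. The coefficient characterizations of order and type (with the extra factor $(n+1)$ being negligible in both $\limsup$'s) do give invariance under differentiation; Phragm\'en--Lindel\"of below the critical exponent $1$ for a half-plane together with Liouville gives (ii); and your treatment of (iii) via $F(z)=h(z)e^{i\tau z}$, the bound on $\R$, the indicator bound along the imaginary axis, and the exponential-type form of Phragm\'en--Lindel\"of for a half-plane is essentially the proof of Theorem 6.2.4 in \cite{B:11}, with Bernstein's inequality quoted for the sharp constant $\tau$, which is legitimate.

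The genuine gap is in (iv). You justify the contour shift by ``the exponential-type bound from part (iii)'', but part (iii) applies only to functions \emph{bounded on the real axis}, whereas in (iv) the hypothesis is only $h|_\R\in L^2(\R)$. Boundedness of $h$ on $\R$ is in fact true, but it is a nontrivial consequence (Plancherel--P\'olya), whose standard proofs are of the same depth as Paley--Wiener itself, so as written your argument is circular, or at least has a missing step of comparable difficulty. Moreover, since $h|_\R$ is merely $L^2$, the integral $g(t)=\frac{1}{2\pi}\int_\R h(x)e^{-itx}\,dx$ is defined only as a Plancherel limit, not as an absolutely convergent integral, so ``shifting the contour'' in it is not directly meaningful; and even granting a pointwise bound $|h(z)|\leq Ae^{\tau|\Im(z)|}$, one must still control the contributions of the vertical sides of the rectangles, which $L^2$-control on $\R$ alone does not provide. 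This is precisely why the proof of Theorem 19.3 in \cite{Rudin:86} avoids a naive shift: it damps the function by $e^{-\epsilon|x|}$, works with absolutely convergent integrals over rays, and patches the resulting holomorphic functions on overlapping half-planes. Your plan for (iv) can be repaired, for instance by regularizing with $h(z)\bigl(\sin(\epsilon z)/(\epsilon z)\bigr)^{2}$ to gain $L^1$ decay on $\R$ at the cost of type $\tau+2\epsilon$, shifting the contour for the regularized function, and then letting $\epsilon\to0$ in $L^2$; but as stated, the step ``the support of $g$ is contained in $[-\tau,\tau]$ by contour shifting, permissible thanks to part (iii)'' does not go through.
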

In the spirit of the Paley--Wiener theorem but for general holomorphic functions we have the following result (see Theorem 9.11 and Theorem 9.14 in \cite{Rudin:86}). 
\begin{proposition}
    [Inversion theorems]\label{propfur} Fix $h \in H(\C)$ such that
    $$\int_\Rset |h(x)|dx<\infty \quad\text{ or }\quad \int_\Rset |h(x)|^2 dx<\infty$$ 
    and suppose that the map $\hat h:\R\to\C$ given by \eqref{eqnfur}
    satisfies
    $\int_\Rset |\hat h(u)|du<\infty$. Then
$
            h(x)=\int_\Rset \hat h(t)\exp(itx)dt,
$
for a.e.~$x\in\R$.
\end{proposition}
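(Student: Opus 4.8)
The plan is to prove this Fourier inversion statement by Gaussian regularization, writing the candidate inverse transform as the limit of convolutions of $h$ against a Gaussian approximate identity. Throughout I use the convention of \eqref{eqnfur}, and the only delicate point will be keeping track of the normalization constant it dictates.

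First I would fix $x\in\Rset$ and, for $\varepsilon>0$, introduce the regularized integral
\[
I_\varepsilon(x):=\int_\Rset \hat h(u)\, e^{iux}\, e^{-\varepsilon u^2}\,du.
\]
Since $\hat h\in L^1(\Rset)$ by hypothesis and $|e^{iux}e^{-\varepsilon u^2}|\le 1$, the dominated convergence theorem gives $I_\varepsilon(x)\to\int_\Rset \hat h(u)e^{iux}\,du$ as $\varepsilon\downarrow0$; thus the right-hand side of the claimed identity is exactly the pointwise limit of $I_\varepsilon$.

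Next I would evaluate $I_\varepsilon$ a second way. Inserting $\hat h(u)=\int_\Rset h(y)e^{-iuy}\,dy$ and applying Fubini's theorem — legitimate in the $L^1$ case because $\int_\Rset\int_\Rset |h(y)|e^{-\varepsilon u^2}\,du\,dy<\infty$ — followed by the elementary Gaussian integral $\int_\Rset e^{iu s-\varepsilon u^2}\,du=\sqrt{\pi/\varepsilon}\,e^{-s^2/(4\varepsilon)}$, yields
\[
I_\varepsilon(x)=\int_\Rset h(y)\,\sqrt{\tfrac{\pi}{\varepsilon}}\,e^{-(x-y)^2/(4\varepsilon)}\,dy=(h*G_\varepsilon)(x),
\]
where $G_\varepsilon(t)=\sqrt{\pi/\varepsilon}\,e^{-t^2/(4\varepsilon)}$. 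The family $G_\varepsilon$ is, up to the constant fixed by \eqref{eqnfur}, an approximate identity concentrating at the origin as $\varepsilon\downarrow0$, so the standard mollifier argument gives $I_\varepsilon\to h$ in $L^1(\Rset)$ (with the appropriate constant) and hence almost everywhere along a subsequence.

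Comparing the two limits of the single family $(I_\varepsilon)_\varepsilon$ then forces $h(x)=\int_\Rset \hat h(u)e^{iux}\,du$ for almost every $x$, which is the assertion. The main obstacle is precisely the normalization: the explicit Gaussian mass $\int_\Rset G_\varepsilon=2\pi$ must be reconciled with the factor implicit in \eqref{eqnfur}, and under the $L^2$ hypothesis one cannot apply Fubini directly — there I would approximate $h$ by the $L^1\cap L^2$ truncations $h\,\mathbf 1_{[-N,N]}$, pass to the limit using Plancherel's identity together with the $L^1$ control on $\hat h$, and then invoke density. Everything else is routine, and one may alternatively just cite Theorem~9.11 and Theorem~9.14 in \cite{Rudin:86}.
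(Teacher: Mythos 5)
Your strategy is genuinely different from the paper's, in the trivial sense that the paper offers no argument at all: it proves this proposition by citation to Theorem~9.11 and Theorem~9.14 of \cite{Rudin:86} (an alternative you yourself mention in your last line). Your Gaussian-summability argument is the standard self-contained route for the $L^1$ case, and the individual steps — dominated convergence for $I_\varepsilon$, Fubini, the Gaussian integral $\int_\Rset e^{ius-\varepsilon u^2}du=\sqrt{\pi/\varepsilon}\,e^{-s^2/(4\varepsilon)}$, concentration of the kernel — are all correctly executed.

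The genuine gap is exactly the point you defer to the end: the constant, and it cannot be ``reconciled.'' With the paper's convention \eqref{eqnfur}, $\hat h(u)=\int_\Rset h(x)e^{-iux}dx$ with plain Lebesgue measure on both sides, so there is no factor implicit in \eqref{eqnfur} available to absorb anything. Your own computation gives $\int_\Rset G_\varepsilon(t)\,dt=2\pi$, hence $h*G_\varepsilon=2\pi\,(h*\phi_\varepsilon)$ for the genuine approximate identity $\phi_\varepsilon:=G_\varepsilon/(2\pi)$, and therefore $I_\varepsilon\to 2\pi h$ in $L^1$ and a.e.\ along a subsequence. Comparing the two limits of $I_\varepsilon$ yields
\[
\int_\Rset \hat h(u)e^{iux}\,du=2\pi\,h(x)\qquad\text{for a.e.\ }x\in\Rset,
\]
which contradicts the displayed identity of the proposition unless $h=0$ a.e. In other words, carried out honestly, your argument establishes inversion with the prefactor $\tfrac{1}{2\pi}$, not the stated formula. (The mismatch originates in the paper itself: Rudin's Theorems 9.11 and 9.14 are stated for the transform taken with respect to the normalized measure $dm=dx/\sqrt{2\pi}$ on both sides, and the proposition transcribes his conclusion while using the unnormalized transform \eqref{eqnfur}; your proof, to its credit, detects this.) A correct write-up must either carry the $\tfrac{1}{2\pi}$ explicitly or renormalize the transform; as proposed, the proof cannot close to the stated identity. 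Separately, your $L^2$ branch is only a plan, and it has an additional subtlety you should make explicit: for $h\in L^2\setminus L^1$ the integral \eqref{eqnfur} need not converge absolutely, so there $\hat h$ must be interpreted as the Plancherel transform (the $L^2$-limit of truncated integrals), and the truncation-plus-density argument has to be written against that definition.
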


\bibliographystyle{abbrvnat}

\end{document}